\newcommand{\R}{\ensuremath{\mathbb{R}}}
\renewcommand{\S}{\ensuremath{\mathbb{S}}}
\newcommand{\Haus}{\ensuremath{\mathcal{H}}}
\newcommand{\K}{\ensuremath{\mathcal{K}}}
\newcommand{\eps}{\ensuremath{\varepsilon}}
\newcommand{\LT}[2]{L_{#1, #2}}
\DeclareMathOperator{\dist}{\textnormal{dist}}
\DeclareMathOperator{\signdist}{\delta}
\DeclareMathOperator{\Tr}{Tr}
\DeclareMathOperator{\supp}{supp}
\DeclareMathOperator{\reg}{reg}
\newcommand{\G}{\ensuremath{\mathcal{G}}}
\newcommand{\U}{\ensuremath{{\mathcal{U}_\eps}}}
\newcommand{\I}{\ensuremath{{\mathcal{I}_\eps}}}
\newcommand{\pps}{\hspace{0.75pt}}
\newtheorem{theorem}{Theorem}[section]
\newtheorem{definition}[theorem]{Definition}
\newtheorem{lemma}[theorem]{Lemma}
\newtheorem{corollary}[theorem]{Corollary}
\numberwithin{theorem}{section}
\theoremstyle{remark}
\newtheorem{remark}[theorem]{Remark}
\newcommand{\limplus}{{\mathchoice{\vcenter{\hbox{$\scriptstyle +$}}}
  {\vcenter{\hbox{$\scriptstyle +$}}}
  {\vcenter{\hbox{$\scriptscriptstyle +$}}}
  {\vcenter{\hbox{$\scriptscriptstyle +$}}}
}}
\newcommand{\limminus}{{\mathchoice{\vcenter{\hbox{$\scriptstyle -$}}}
  {\vcenter{\hbox{$\scriptstyle -$}}}
  {\vcenter{\hbox{$\scriptscriptstyle -$}}}
  {\vcenter{\hbox{$\scriptscriptstyle -$}}}
}}
\newcommand{\limpm}{{\mathchoice{\vcenter{\hbox{$\scriptstyle \pm$}}}
  {\vcenter{\hbox{$\scriptstyle \pm$}}}
  {\vcenter{\hbox{$\scriptscriptstyle \pm$}}}
  {\vcenter{\hbox{$\scriptscriptstyle \pm$}}}
}}
\newcommand{\Emink}{\ensuremath{\vartheta}}
\newcommand{\Egp}{\ensuremath{\mu}}
\begin{document}

\title[Two-term spectral asymptotics]{Two-term spectral asymptotics\\ for the Dirichlet Laplacian in a Lipschitz domain}

\author[R. L. Frank]{Rupert L. Frank}
\address{\textnormal{(R. L. Frank)} Mathematisches Institut, Ludwig-Maximilans Universit\"at M\"unchen, Theresinstr. 39, 80333 M\"unchen, Germany, and Department of Mathematics, California Institute of Technology, Pasadena, CA 91125, USA}
\email{r.frank@lmu.de, rlfrank@caltech.edu}

\author[S. Larson]{Simon Larson}
\address{\textnormal{(S. Larson)} Department of Mathematics, KTH Royal Institute of Technology, SE-100 44 Stockholm, Sweden}
\email{simla@math.kth.se}

\subjclass[2010]{35P20}
\keywords{Dirichlet Laplace operator, Semiclassical asymptotics, Weyl's law.}

\begin{abstract}
We prove a two-term Weyl-type asymptotic formula for sums of eigenvalues of the Dirichlet Laplacian in a bounded open set with Lipschitz boundary. Moreover, in the case of a convex domain we obtain a universal bound which correctly reproduces the first two terms in the asymptotics.
\end{abstract}

\thanks{\copyright\, 2019 by the authors. This paper may be
reproduced, in its entirety, for non-commercial purposes.\\
U.S.~National Science Foundation grant DMS-1363432 (R.L.F.) and Swedish Research Council grant no.~2012-3864 (S.L.) is acknowledged. The authors are grateful to an anonymous referee for helpful remarks.}

\maketitle


\section{Introduction and main result}

In this paper we investigate the asymptotic behavior of the eigenvalues of the Dirichlet Laplacian on domains with rough boundary. Besides being of intrinsic interest, this question is relevant for some problems in shape optimization, as we will explain below in some more detail.

One of the central results in the spectral theory of differential operators is Weyl's law~\cite{Weyl_12}. It states that the eigenvalues 
\begin{equation}
  0 < \lambda_1 \leq \lambda_2 \leq \lambda_3 \leq \dots \, , 
\end{equation}
repeated according to multiplicities, of the Dirichlet Laplacian $-\Delta_\Omega$ in an open set $\Omega\subset\R^d$ of finite measure satisfy
\begin{equation}\label{eq:first_Weyl_term}
\# \{ \lambda_k < \lambda \} = \frac{\omega_d}{(2\pi)^d}|\Omega| \lambda^{d/2} + o(\lambda^{d/2})
 \qquad \mbox{as }\lambda\to \infty \, , 
\end{equation}
where $\omega_d$ denotes the measure of the unit ball in $\R^d$. The fact that this asymptotic expansion holds without any regularity conditions on $\Omega$ was shown in \cite{Rozenblum_72}.

In~\cite{Weyl_13} Weyl conjectured that a refined version of the asymptotic formula~\eqref{eq:first_Weyl_term} holds. Namely, he conjectured that
\begin{equation}\label{eq:asym_ivrii}
\# \{ \lambda_k < \lambda \} = \frac{\omega_d}{(2\pi)^d}|\Omega| \lambda^{d/2} - \frac14 \frac{\omega_{d-1}}{(2\pi)^{d-1}} \Haus^{d-1}(\partial \Omega) \lambda^{(d-1)/2} + o(\lambda^{(d-1)/2})
 \qquad \mbox{as }\lambda\to \infty \, .
\end{equation}
Here $\Haus^{d-1}(\partial\Omega)$ denotes the $(d-1)$-dimensional Hausdorff measure of the boundary. This conjecture was proved by Ivrii in \cite{Ivrii_80} under two additional assumptions. The first assumption is that the measure of all periodic billiards is zero and the second assumption is that the boundary of the set is smooth. It is believed, but only known in special cases~\cite{Vassiliev_84,Vassiliev_86}, that the first assumption is always satisfied. Concerning the second assumption, in a series of papers \cite{Ivrii_00, BronsteinIvrii_03, Ivrii_03} Ivrii and co-workers have tried to lower the required assumptions on the boundary of the set. In particular, in \cite{Ivrii_03} the asymptotics~\eqref{eq:asym_ivrii} are proved under the billiard assumption for $C^1$ domains such that the derivatives of the functions describing the boundary have a modulus of continuity $o(|{\log r}|^{-1})$. Without the billiard assumption it is shown that the left side of \eqref{eq:asym_ivrii} differs from the first term on the right side by $O(\lambda^{(d-1)/2})$. This bound, in the smooth case, is originally due to Seeley \cite{Seeley_78,Seeley_80}.

The goal of this paper is to show that an averaged version of the asymptotics~\eqref{eq:asym_ivrii} is valid for any bounded open set with Lipschitz boundary. In order to state this result precisely, we write $x_\limpm = (|x|\pm x)/2$, so that
$$
\Tr(-\Delta_\Omega-\lambda)_\limminus = \sum_{\lambda_k<\lambda} (\lambda-\lambda_k) \,,
$$
and abbreviate
$$
L_d = \frac{2}{2+d} \frac{\omega_d}{(2\pi)^d} \, .
$$
Our main result is

\begin{theorem}\label{thm:MainTheorem}
Let $\Omega\subset\R^d$, $d\geq 2$, be a bounded open set with Lipschitz regular boundary. Then, as $\lambda\to\infty$, 
  \begin{equation}\label{eq:mainintro}
    \Tr(-\Delta_\Omega-\lambda)_\limminus = L_d |\Omega| \lambda^{1+d/2} - \frac{L_{d-1}}{4} \Haus^{d-1}(\partial\Omega) \lambda^{1+(d-1)/2} + o(\lambda^{1+(d-1)/2}) \, .
  \end{equation}
\end{theorem}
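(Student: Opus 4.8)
The plan is to prove Theorem~\ref{thm:MainTheorem} by establishing, separately, matching asymptotic lower and upper bounds for $\Tr(-\Delta_\Omega-\lambda)_\limminus$, each reproducing the two displayed terms up to an error $o(\lambda^{1+(d-1)/2})$. Both bounds rest on the same geometric decomposition: split $\Omega$ into its interior bulk and a boundary collar $\Omega_h=\{x\in\Omega:\dist(x,\partial\Omega)<h\}$, with width $h=h(\lambda)$ chosen so that $h\to0$ while $h\sqrt\lambda\to\infty$ --- so that the collar captures the entire boundary layer (of width $\sim\lambda^{-1/2}$) yet has asymptotically negligible volume on the relevant scale. In the interior the Weyl term $L_d|\Omega|\lambda^{1+d/2}$ is produced by classical means: the Berezin--Li--Yau inequality $\Tr(-\Delta_U-\lambda)_\limminus\le L_d|U|\lambda^{1+d/2}$ for the upper bound (applied after a Neumann bracketing on a coarse mesh whose cells grow with $\lambda$, so that the artificial interface errors are $o(\lambda^{1+(d-1)/2})$), and a matching lower bound on each interior cell via localized trial functions for the lower bound. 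All the real work is the collar.

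In the collar I would use the local graph representation $\Omega\cap B=\{x_d>\phi(x')\}\cap B$ with $\phi$ Lipschitz and flatten the boundary by the volume-preserving shear $\Psi\colon(y',y_d)\mapsto(y',y_d+\phi(y'))$. The operator $-\Delta_\Omega$ is then unitarily equivalent to a divergence-form operator on $\{y_d>0\}$, with Dirichlet data on $\{y_d=0\}$ and a bounded measurable coefficient matrix $M(\nabla\phi(y'))$ of determinant one. The crucial point is that when $\phi$ is \emph{affine} the matrix $M$ is constant, and a further volume-preserving linear change of variables conjugates the operator to $-\Delta$ on a rotated half-space; since the spectral quantities of the Dirichlet Laplacian on a half-space are rotation invariant, the associated boundary correction is exactly the flat one, $-\tfrac14L_{d-1}\lambda^{1+(d-1)/2}$ per unit $(d-1)$-dimensional area of the bounding hyperplane --- and the area of that face picks up precisely the Jacobian factor $\sqrt{1+|\nabla\phi|^2}$, which is the density of $\Haus^{d-1}$ on $\partial\Omega$ relative to $dx'$. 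Summing the affine-model contributions over $\partial\Omega$ therefore reproduces $-\tfrac14L_{d-1}\Haus^{d-1}(\partial\Omega)\lambda^{1+(d-1)/2}$. Equivalently, one can read the term off in ``Fermi coordinates'' built from the Lipschitz signed distance function $\signdist$: since $|\nabla\signdist|=1$ a.e. the normal direction contributes the one-dimensional Dirichlet half-line Laplacian, whose Riesz-mean defect relative to the line equals $-\mu/4$ at energy $\mu$, and integrating this against the tangential variable --- using that the induced area of $\{\signdist=t\}$ tends to $\Haus^{d-1}(\partial\Omega)$ as $t\to0^+$, a real-analysis fact available for Lipschitz domains --- gives the same answer. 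At the level of inequalities, the lower bound in the collar comes from the averaged variational principle with trial functions equal to the half-space generalized eigenfunctions (precomposed with $\Psi$) times a cutoff, whose rank-one operators integrate to the identity; the upper bound comes from a boundary-sensitive refinement of Berezin--Li--Yau in which $(\lambda-|\xi|^2)_+$ is replaced by a symbol that feels the distance to $\partial\Omega$.

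\textbf{The main obstacle} is exactly the merely Lipschitz regularity. After flattening, the coefficients $M(\nabla\phi)$ lie only in $L^\infty$ and are genuinely discontinuous (at conical points, say), so no microlocal or frozen-coefficient parametrix is available and --- unlike in the $C^1$ case --- one cannot rescale to make the boundary flatter. Two features save the argument. First, one exploits that the Riesz mean, unlike the counting function, is stable under small deformations of $\Omega$: replacing $\phi$ on a tangential cube of side $\ell$ by its best affine fit costs, per cube, an error governed by $\|\phi-(\text{affine})\|_{L^\infty}\lesssim\ell$ measured against the boundary-layer width, which is acceptable provided $\ell\sqrt\lambda\to0$; one must then check that these errors, summed over the $O(\Haus^{d-1}(\partial\Omega)\,\ell^{-(d-1)})$ boundary cubes, remain $o(\lambda^{1+(d-1)/2})$, which forces a delicate joint choice of the scales $\ell$, $h$ and the interior mesh size as functions of $\lambda$ and a decomposition introducing no spurious Dirichlet interfaces near $\partial\Omega$ (such interfaces would contribute at the critical order $\lambda^{1+(d-1)/2}$ and with the wrong sign). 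Second, one uses monotonicity systematically --- Dirichlet domain monotonicity and Dirichlet--Neumann bracketing --- to trap $\Omega$ between explicit affine-boundary models. For convex $\Omega$ the concavity of $x\mapsto\dist(x,\partial\Omega)$ upgrades this into a \emph{universal} two-term upper bound (a distance-weighted Berezin--Li--Yau inequality) which, integrated using the regular shrinking of the inner parallel bodies, already yields the second term with no error. Combining the asymptotic lower and upper bounds then gives~\eqref{eq:mainintro}.
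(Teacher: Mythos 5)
Your scheme founders on a scale conflict that is exactly the obstruction this problem poses for Lipschitz (as opposed to $C^1$) boundaries. For a merely Lipschitz graph $\phi$, the best affine fit on a tangential cube of side $\ell$ has error of order $\mathrm{Lip}(\phi)\,\ell$, i.e.\ comparable to $\ell$ and \emph{not} $o(\ell)$: the boundary does not get flatter under rescaling, and likewise $\nabla\phi$ is not close to a constant on small cubes. Your own accounting of this error (stability of the Riesz mean under an $L^\infty$-displacement $\delta$ of the boundary costs $\sim\lambda^{1+d/2}\delta$ per unit boundary area, so $\delta\sim\ell$ forces $\ell\sqrt\lambda\to0$) is what makes it look harmless; but any localization at tangential scale $\ell$ --- Dirichlet or Neumann bracketing, or a smooth partition of unity --- produces cutoff/interface errors near the boundary of order $\ell^{d-2}\lambda^{1+(d-2)/2}$ per cube, which dominate the boundary term $\sim\ell^{d-1}\lambda^{1+(d-1)/2}$ per cube precisely when $\ell\lesssim\lambda^{-1/2}$; summed over the $\sim\ell^{-(d-1)}$ cubes per unit area this is $\lambda^{1+(d-1)/2}\cdot(\lambda^{-1/2}/\ell)$, so resolving the second term requires $\ell\sqrt\lambda\to\infty$. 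The two requirements are incompatible, and the ``delicate joint choice of the scales'' you defer to does not exist. The same conflict defeats the fallback of trapping $\Omega$ between affine (half-space) models by monotonicity: near an edge or corner the inner and outer half-space models at scale $\ell$ differ by $\sim\ell$ in the normal direction, which is again an order-$\ell$ displacement. For $C^1$ boundaries the fit error is $o(\ell)$ uniformly and your plan is essentially the Frank--Geisinger argument; for Lipschitz boundaries it breaks at exactly this point, which is why the paper states that flattening cannot work here. (The ``Fermi coordinate'' reading via the signed distance function is a pointwise heuristic of Brown's heat-kernel argument; it does not survive the passage to Riesz means, where one cannot argue pointwise.)

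What is missing from your proposal, and what the paper supplies instead, is a quantitative good/bad decomposition of the boundary plus a way to pay for the residual hyperplane error at a scale $\ell\gg\lambda^{-1/2}$. By Rademacher's theorem, for every $\eps>0$ all but a set of boundary points of small $\Haus^{d-1}$-measure are $(\eps,r)$-good for small $r$: near them $\partial\Omega$ lies in an $\eps$-cone about the tangent hyperplane, so the affine replacement costs only $\eps\ell$ rather than $O(\ell)$, and one can keep the localization scale large compared with $\lambda^{-1/2}$; the bad set is discarded using Berezin--Li--Yau together with the measure-theoretic bound of Lemma \ref{lem:GoodSetIsBig}. Even on the good set the discrepancy between the inner and outer $\eps$-cones is not controlled by volume alone at the required order, and the paper needs a genuinely new ingredient --- two-term local asymptotics for circular cones, uniform in the opening angle (Lemma \ref{lem:LocalAsympCone}) --- to close that gap. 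Without analogues of the bad-set estimate and of the cone lemma, your argument proves the theorem only under a uniform $o(\ell)$ flatness hypothesis, i.e.\ essentially for $C^1$ domains, not for Lipschitz ones.
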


We will discuss momentarily in which sense this theorem improves earlier results and sketch the strategy of its proof. Before doing so, we would like to emphasize that the methods that we develop in order to prove Theorem \ref{thm:MainTheorem} can also be used to prove universal, that is, non-asymptotic bounds. For instance, for convex sets we obtain the following bound.

\begin{theorem}\label{thm:asymptotic_ineq_convex}
  Let $\Omega\subset \R^d$, $d\geq 2$, be a convex bounded open set. Then, for all $\lambda>0$, 
  \begin{align}
    & \Bigl| \Tr(-\Delta_\Omega-\lambda)_\limminus - L_d |\Omega| \lambda^{1+d/2} + \frac{L_{d-1}}{4} \Haus^{d-1}(\partial\Omega) \lambda^{1+(d-1)/2} \Bigr| \\
    & \qquad 
    \leq
    C \Haus^{d-1}(\partial\Omega) \lambda^{1+(d-1)/2} \Bigl( r_{in}(\Omega)\sqrt\lambda \Bigr)^{-1/11}\, ,   
  \end{align}
  where the constant $C$ depends only on the dimension.
\end{theorem}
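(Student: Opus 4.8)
My plan is to prove the quantitative bound for convex domains by a localization argument that reduces the problem to half-space model computations, tracking all error terms explicitly in terms of the natural length scale $\lambda^{-1/2}$ and the inradius $r_{in}(\Omega)$. The starting point is the well-known fact that the Riesz mean $\Tr(-\Delta_\Omega-\lambda)_\limminus$ admits a variational characterization (via the coherent-state/heat-kernel or Berezin--Li--Yau type bracketing) that makes it amenable to cutting $\Omega$ into pieces. Concretely, I would partition a neighborhood of $\partial\Omega$ of width $\sim h$ (for a parameter $h$ to be optimized, with $\lambda^{-1/2}\ll h\ll r_{in}(\Omega)$) into boundary cells on which $\partial\Omega$ is, up to controlled error, flat, and treat the bulk $\{x\in\Omega:\dist(x,\partial\Omega)>h\}$ by the sharp two-sided Weyl bound with remainder controlled by the volume of the collar. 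For convex $\Omega$ the boundary is Lipschitz with a one-sided curvature control, and crucially one has the monotonicity $\Omega'\subset\Omega\Rightarrow \Tr(-\Delta_{\Omega'}-\lambda)_\limminus\le \Tr(-\Delta_\Omega-\lambda)_\limminus$, which lets me sandwich $\Omega$ between an inner and outer polyhedral (or piecewise-flat) approximation.

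The key steps, in order, are: (i) establish the half-space reference formula, i.e. compute $\Tr(-\Delta_{\mathbb H}-\lambda)_\limminus$ per unit boundary area for the half-space $\mathbb H=\R^{d-1}\times(0,\infty)$, which produces exactly the $-\frac{L_{d-1}}{4}\Haus^{d-1}\lambda^{1+(d-1)/2}$ correction; (ii) prove a \emph{local} two-term bound near a flat boundary piece of diameter $\ell$, with error $O(\ell^{d-2}\lambda^{(d-1)/2}\cdot(\text{geometric defect}))$, using Dirichlet--Neumann bracketing on a slab and the explicit half-space spectrum in the normal variable combined with the free Weyl law in the $d-1$ tangential variables; (iii) control the curvature/Lipschitz defect: for a convex body, the boundary deviates from its tangent plane over a cell of size $\ell$ by an amount one can bound using that the Gauss map is monotone, so that summing the defects over all $\sim \Haus^{d-1}(\partial\Omega)/\ell^{d-1}$ cells gives a total error $\lesssim \Haus^{d-1}(\partial\Omega)\,\lambda^{1+(d-1)/2}\,(\ell\sqrt\lambda)\cdot(\ell/r_{in})^{?}$ up to the competing slab-width error $\lesssim \Haus^{d-1}(\partial\Omega)\lambda^{1+(d-1)/2}(h\sqrt\lambda)^{-1}$ and tangential-boundary-effect error from the cell walls $\lesssim \Haus^{d-1}(\partial\Omega)\lambda^{1+(d-1)/2}(\ell\sqrt\lambda)^{-1}$; (iv) optimize the free parameters $h,\ell$ (and the number of bracketing steps in the normal direction), which is where the somewhat unusual exponent $1/11$ emerges — it is the outcome of balancing three or four error contributions with different powers, so the plan is to write each error as $(\text{length}\cdot\sqrt\lambda)^{\pm\alpha_i}$ times the surface term and choose scales to equalize them.

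The main obstacle I anticipate is step (iii): getting a clean, dimension-independent quantitative handle on how much the convex boundary wiggles away from flat over a cell, and doing so in a way that interacts correctly with the Dirichlet bracketing (one must enclose the true curved collar between two flat slabs, and the gap between them must be quantified by a genuinely \emph{geometric} quantity like the width of the spherical image of the cell, controlled via convexity and the inradius). A secondary difficulty is ensuring the errors from the artificial Dirichlet/Neumann walls separating adjacent boundary cells are summable — one needs the number of cells to not be too large, which pushes $\ell$ up, while the flatness defect pushes $\ell$ down; reconciling these with the normal-direction slab error is exactly the optimization producing the stated power. I expect the convexity to enter in three essential places: the monotonicity of Riesz means under inclusion, the one-sided bound on the second fundamental form (in a measure-theoretic Alexandrov sense), and the comparison $\Haus^{d-1}(\partial\Omega)\gtrsim r_{in}(\Omega)^{d-1}$ together with $|\Omega|\lesssim \Haus^{d-1}(\partial\Omega)\,r_{in}(\Omega)^{-1}\cdot(\text{diam})\cdot\ldots$, which are needed to absorb the bulk remainder into the surface term.
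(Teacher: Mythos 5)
Your overall architecture --- cut off a collar of width $h$, tile it by cells of tangential size $\ell$, compare each cell with a half-space model, and optimize the scales --- has a genuine gap at its core: the Dirichlet--Neumann bracketing you propose cannot see the second term. Adding Dirichlet (resp.\ Neumann) conditions on the artificial interface separating the bulk $\{\dist(x,\partial\Omega)>h\}$ from the collar gives a valid lower (resp.\ upper) bound, but that interface has area comparable to $\Haus^{d-1}(\partial\Omega)$, and each of the two pieces then acquires its own boundary correction $\mp\tfrac{L_{d-1}}{4}\,(\text{interface area})\,\lambda^{1+(d-1)/2}$ there. This spurious contribution does not decay as $(h\sqrt\lambda)^{-1}$ (only the oscillatory part of the one-dimensional cross-problem decays; the Weyl endpoint correction is of full size), so your upper and lower bounds differ by a fixed multiple of the very term you are trying to extract, for every choice of $h$ and $\ell$. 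The same problem recurs, less severely, at the cell walls, whose error you book as $(\ell\sqrt\lambda)^{-1}$ but which is really of order $h/\ell$ in relative size. The paper circumvents exactly this obstruction by using a smooth Solovej--Spitzer partition of unity with variable length scale $l(u)\sim\max\{\dist(u,\Omega^c),l_0\}$: the IMS localization error is quadratic in the cutoff gradients, of relative size $h/l_0$, and hence can be made small; with sharp walls no such gain is available.

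The second, independent gap is your step (iii). For a convex body the boundary is not pointwise close to its tangent plane at scale $\ell$ --- near an edge or vertex the deviation is of order $\ell$, with no help from $r_{in}$ --- so a cell-by-cell flatness bound ``because the Gauss map is monotone'' is false as stated; monotonicity only controls the \emph{total} turning, i.e.\ the measure of the cells where the normal varies a lot. What is actually needed is (a) a quantitative good/bad decomposition of the collar, with the measure of the bad part bounded solely in terms of $r_{in}(\Omega)$ and $\Haus^{d-1}(\partial\Omega)$ (in the paper this is Lemma~\ref{lem:explicit_G_convex}, proved via the normal map onto the inner parallel body at distance $r/\eps$ together with the perimeter monotonicity \eqref{eq:bound_perim_innerparallel}), the bad cells being discarded only after a Berezin--Li--Yau bound; and (b) on the good cells, where the boundary is merely trapped between two cones of aperture $\eps$ rather than flat, a spectral estimate for the cone approximants themselves (Lemma~\ref{lem:LocalAsympCone}) to control the discrepancy with the half-space. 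Your proposal contains neither mechanism, and without them the error from the non-flat cells cannot be summed to anything smaller than the second term. So while the parameter-balancing philosophy and the use of convexity ($r_{in}$ vs.\ $\Haus^{d-1}$, perimeter monotonicity) are in the right spirit, the proof as sketched would not go through.
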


By integration with respect to $\lambda$, Theorem~\ref{thm:asymptotic_ineq_convex} implies a corresponding inequality for $\Tr(e^{t\Delta_\Omega})$ which is valid uniformly for all $t>0$. This improves an earlier result by van den Berg~\cite{vdBerg_84}, where an additional bound on the curvatures was assumed.

In a similar manner, Theorem~\ref{thm:asymptotic_ineq_convex} implies universal upper and lower bounds for $\Tr(H_\Omega)_\limminus^\gamma$ for all $\gamma \geq 1$. The resulting upper bound can be seen as an improvement of an inequality going back to work of Berezin~\cite{Berezin} and Li--Yau~\cite{LiYau_83}. 
Such improved versions of the Berezin--Li--Yau inequality have been the topic of several recent papers~\cite{Melas_03, Weidl_08, KovarikVugalterWeidl_09, GeisingerWeidl_10, GeisingerLaptevWeidl_11, LarsonPAMS, HarrellStubbe_16}. 
Lower bounds in the same spirit are contained in~\cite{HarrellProvenzanoStubbe_18}. In contrast to our Theorem~\ref{thm:asymptotic_ineq_convex}, however, none of these previous upper and lower bounds reproduces correctly the second term in the asymptotics.

A challenging open question from shape optimization theory, which, in part, motivated this work, is whether for fixed $\gamma\geq 0$, a family $(\Omega_{\lambda,\gamma})_{\lambda>0}$ of optimizers of the problem
  \begin{equation}
    \sup\{\Tr(-\Delta_\Omega-\lambda)_\limminus^\gamma: \Omega\subset\R^d \ \text{open}, \ |\Omega|=1\}\,
  \end{equation}
converges as $\lambda\to\infty$ to a ball of unit measure. We refer to \cite{Larson_JST} for more on this problem. The intuition for why the convergence to a ball might be true is that, while the leading term in the asymptotics of $\Tr(-\Delta_\Omega-\lambda)_\limminus^\gamma$ as $\lambda\to\infty$ is fixed due to the constraint $|\Omega|=1$, maximizing the second term leads to minimizing $\Haus^{d-1}(\partial\Omega)$ under the constraint $|\Omega|=1$. By the isoperimetric inequality the unique solution to this problem is a ball of unit measure. The difficulty with making this intuition rigorous is that one needs the asymptotics of $\Tr(-\Delta_\Omega-\lambda)_\limminus^\gamma$ not only for a fixed domain $\Omega$, but rather for a family of domains $\Omega_{\lambda,\gamma}$ depending on $\lambda$ with a priori no information concerning their geometry.

While we have not been able to answer this question in full generality, we did prove the corresponding result for a similar optimization problem with an additional convexity constraint and $\gamma\geq 1$. Namely, as a corollary of Theorem \ref{thm:asymptotic_ineq_convex} we obtain

\begin{corollary}\label{shapeopt}
  Let $\gamma \geq 1$. For $\lambda>0$ let\/ $\Omega_{\lambda, \gamma}$ denote any extremal domain of the shape optimization problem
  \begin{equation}
    \sup\{\Tr(-\Delta_\Omega-\lambda)_\limminus^\gamma: \Omega\subset\R^d \ \text{convex open}, \ |\Omega|=1\}\, .
  \end{equation}
  Then, up to translation, $\Omega_{\lambda, \gamma}$ converges in the Hausdorff metric to a ball of unit measure as $\lambda \to \infty$.
\end{corollary}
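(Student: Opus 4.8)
The plan is to combine the quantitative two-term bound of Theorem~\ref{thm:asymptotic_ineq_convex} with the quantitative isoperimetric inequality and a compactness argument in the Hausdorff metric. First I would establish that the perimeter of the extremal domains is asymptotically minimal. Fix a ball $B$ of unit measure. Since $B$ is convex, it is an admissible competitor, so $\Tr(-\Delta_{\Omega_{\lambda,\gamma}}-\lambda)_\limminus^\gamma \geq \Tr(-\Delta_B-\lambda)_\limminus^\gamma$. Applying Theorem~\ref{thm:asymptotic_ineq_convex} (after integrating in $\lambda$ to pass from the $\gamma=1$ statement to general $\gamma\geq 1$, or by invoking the analogous $\Tr(\cdot)_\limminus^\gamma$ bound mentioned in the text) to both sides, and using $|\Omega_{\lambda,\gamma}|=|B|=1$, the leading $L_d|\Omega|\lambda^{1+d/2}$ terms cancel and one is left with
\begin{equation}\label{eq:perim-comparison}
  -\frac{L_{d-1}}{4}\Haus^{d-1}(\partial\Omega_{\lambda,\gamma})\,\lambda^{1+(d-1)/2} + o(\lambda^{1+(d-1)/2}) \geq -\frac{L_{d-1}}{4}\Haus^{d-1}(\partial B)\,\lambda^{1+(d-1)/2} + o(\lambda^{1+(d-1)/2})\,,
\end{equation}
where the error terms are controlled because, by the standard Berezin--Li--Yau-type bound applied to $\Omega_{\lambda,\gamma}$ together with $|\Omega_{\lambda,\gamma}|=1$ and the convexity-based perimeter bound, $\Haus^{d-1}(\partial\Omega_{\lambda,\gamma})$ grows at most like a power of $\lambda$ that is absorbed by the error; more carefully one first gets an a priori bound $\Haus^{d-1}(\partial\Omega_{\lambda,\gamma}) = O(1)$ from the fact that any convex set of volume $1$ containing no long thin direction can be handled, and in the remaining thin case the error term in Theorem~\ref{thm:asymptotic_ineq_convex}, which carries the factor $(r_{in}(\Omega)\sqrt\lambda)^{-1/11}$, must still be shown to be $o(\Haus^{d-1}(\partial\Omega_{\lambda,\gamma})\lambda^{1+(d-1)/2})$. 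Dividing \eqref{eq:perim-comparison} by $\lambda^{1+(d-1)/2}$ and sending $\lambda\to\infty$ yields $\limsup_{\lambda\to\infty}\Haus^{d-1}(\partial\Omega_{\lambda,\gamma}) \leq \Haus^{d-1}(\partial B)$.

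Next I would upgrade this to Hausdorff convergence. Since each $\Omega_{\lambda,\gamma}$ is convex with $|\Omega_{\lambda,\gamma}|=1$ and, by the previous step, uniformly bounded perimeter, a convex set of unit volume and bounded surface area has uniformly bounded diameter (a long thin convex body of fixed volume has large perimeter), so after translating the $\Omega_{\lambda,\gamma}$ to, say, have barycenter at the origin, the family is precompact in the Hausdorff metric by the Blaschke selection theorem. Let $\Omega_*$ be the Hausdorff limit of any subsequence; convexity and volume are preserved under Hausdorff convergence of convex bodies, so $|\Omega_*|=1$, and perimeter is lower semicontinuous, giving $\Haus^{d-1}(\partial\Omega_*)\leq \liminf \Haus^{d-1}(\partial\Omega_{\lambda,\gamma}) \leq \Haus^{d-1}(\partial B)$. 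By the isoperimetric inequality, equality must hold and $\Omega_*$ is a ball of unit measure. Since every subsequential limit (up to translation) is the same ball, the whole family converges, establishing the claim.

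The main obstacle, I expect, is not the soft compactness/isoperimetry part but rather controlling the error term in Theorem~\ref{thm:asymptotic_ineq_convex} uniformly over the a priori unknown family $\Omega_{\lambda,\gamma}$: the bound degrades when the inradius $r_{in}(\Omega_{\lambda,\gamma})$ is small compared to $\lambda^{-1/2}$, i.e. for very thin domains. One must therefore argue separately that near-extremal convex domains cannot be too thin — this should follow because a thin convex body of unit volume has perimeter bounded below by a large constant, which via \eqref{eq:perim-comparison} would contradict the upper bound $\Haus^{d-1}(\partial\Omega_{\lambda,\gamma}) \lesssim \Haus^{d-1}(\partial B) + o(1)$ already for moderately large $\lambda$; making this quantitative, so that $r_{in}(\Omega_{\lambda,\gamma})\sqrt\lambda\to\infty$ and hence the error genuinely is lower order, is the technical heart of the argument. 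It may be cleaner to first prove the weaker statement $\Haus^{d-1}(\partial\Omega_{\lambda,\gamma})\to\Haus^{d-1}(\partial B)$ by a bootstrap: a crude bound on the perimeter gives a crude lower bound on the inradius, which feeds back into Theorem~\ref{thm:asymptotic_ineq_convex} to give a sharper perimeter bound, and so on.
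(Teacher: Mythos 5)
Your route is genuinely different from the paper's, and it contains a gap that you yourself flag but do not close. The paper's proof is short by design: it reduces to $\gamma=1$ via the Aizenman--Lieb identity, observes that on compact subsets of the space $\K$ of convex bodies the quantities $|\Omega|$ and $\Haus^{d-1}(\partial\Omega)$ are bounded, hence $r_{in}(\Omega)$ is bounded below by \eqref{eq:inradius_bound}, so Theorem~\ref{thm:asymptotic_ineq_convex} gives the two-term expansion \emph{uniformly on compact subsets of $\K$}, and then invokes \cite[Proposition~4.1]{Larson_JST}, which contains all of the shape-optimization machinery -- in particular the proof that maximizing families cannot degenerate (become thin) and the compactness/isoperimetry step. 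You instead attempt a self-contained argument by comparing $\Omega_{\lambda,\gamma}$ with the unit ball; the soft part of your plan (Blaschke selection, continuity of volume and semicontinuity of perimeter under Hausdorff convergence of convex bodies, the isoperimetric inequality, and the fact that unit volume plus bounded perimeter gives bounded diameter) is fine. What is missing is exactly the non-degeneracy statement you defer to ``the technical heart'': without it the error term of Theorem~\ref{thm:asymptotic_ineq_convex} applied to the unknown maximizer is not under control, and no conclusion about $\Haus^{d-1}(\partial\Omega_{\lambda,\gamma})$ can be drawn.

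Concretely, with $|\Omega_{\lambda,\gamma}|=1$ the bound \eqref{eq:inradius_bound} gives $r_{in}(\Omega_{\lambda,\gamma})\geq 1/\Haus^{d-1}(\partial\Omega_{\lambda,\gamma})$, so writing $P=\Haus^{d-1}(\partial\Omega_{\lambda,\gamma})$ your comparison \eqref{eq:perim-comparison} really reads $\tfrac{L_{d-1}}{4}P-CP^{1+1/11}\lambda^{-1/22}\leq \tfrac{L_{d-1}}{4}\Haus^{d-1}(\partial B)+o(1)$, and this is vacuous once $P\gtrsim \lambda^{1/2}$, since the left-hand side is then negative; so it does not yield $\limsup P\leq\Haus^{d-1}(\partial B)$ by itself. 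Moreover the only non-degeneracy available for free, namely $\lambda_1(\Omega_{\lambda,\gamma})<\lambda$ and hence $r_{in}(\Omega_{\lambda,\gamma})\geq (\pi/2)\lambda^{-1/2}$ by the Hersch--Protter bound, only makes $(r_{in}\sqrt\lambda)^{-1/11}=O(1)$, not small compared with $L_{d-1}/4$ (the constant $C$ in Theorem~\ref{thm:asymptotic_ineq_convex} is not at your disposal), so the bootstrap you sketch cannot get started from this trivial bound. Also, the ``standard Berezin--Li--Yau-type bound'' you appeal to contains no boundary term and gives no control on $P$ at all. To close the argument you need an independent a priori bound $P=O(\lambda^{\alpha})$ for some $\alpha<1/2$ (equivalently $r_{in}\gtrsim\lambda^{-\alpha}$), for instance from an improved Berezin--Li--Yau inequality for convex sets with a negative perimeter correction (as in \cite{GeisingerLaptevWeidl_11,LarsonPAMS}): comparing that upper bound for $\Omega_{\lambda,\gamma}$ with the lower bound for the ball immediately gives $P=O(1)$, after which your scheme (and the feedback into Theorem~\ref{thm:asymptotic_ineq_convex}) works. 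This is essentially the content of the cited \cite[Proposition~4.1]{Larson_JST}, which is why the paper delegates it rather than reproving it.
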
 

\begin{proof}
Let $\mathcal K$ be the set of all non-empty, bounded convex open sets in $\R^d$. This is a metric space with respect to the Hausdorff metric. In order to prove the corollary, by~\cite[Proposition~4.1]{Larson_JST} we only need to show that the asymptotic expansion
\begin{equation}\label{eq:two_term_weyl_lambda}
  \Tr(-\Delta_\Omega-\lambda)_\limminus^\gamma = \LT{\gamma}{d}|\Omega|\lambda^{\gamma+d/2}- \frac{1}{4}\LT{\gamma}{d-1}\Haus^{d-1}(\partial\Omega)\lambda^{\gamma+(d-1)/2}+o(\lambda^{\gamma+(d-1)/2})\, , 
\end{equation}
as $\lambda \to \infty$, holds uniformly on compact subsets of $\K$. 
Here
$$
\LT{\gamma}{d}= \frac{\Gamma(\gamma+1)}{(4\pi)^{d/2}\Gamma(\gamma+1+d/2)} \, .
$$

Recall the Aizenman--Lieb identity~\cite{AizenmanLieb_78}: for $0\leq \gamma_1 <\gamma_2$ and $\lambda \geq 0,$
\begin{equation}\label{eq:Aizenman_Lieb}
  \Tr(-\Delta_\Omega-\lambda)_\limminus^{\gamma_2} = B(1+\gamma_1, \gamma_2-\gamma_1)^{-1}\int_0^\lambda \tau^{\gamma_2-\gamma_1-1}\Tr(-\Delta_\Omega-(\lambda-\tau))_\limminus^{\gamma_1}\,d\tau\, ,
\end{equation}
where $B$ denotes the Euler Beta function.

By~\eqref{eq:Aizenman_Lieb} it suffices to prove the uniform asymptotics~\eqref{eq:two_term_weyl_lambda} for $\gamma=1$. 
Since $|\Omega|$ and $\Haus^{d-1}(\partial\Omega)$ are continuous on $\mathcal K$, they are bounded on compact subsets of $\mathcal K$. Therefore it suffices to prove~\eqref{eq:two_term_weyl_lambda} uniformly for sets $\Omega$ with bounded $|\Omega|$ and $\Haus^{d-1}(\partial\Omega)$. This follows from Theorem~\ref{thm:asymptotic_ineq_convex} together with the fact that one can bound $r_{in}(\Omega)$ from below in terms of $|\Omega|$ and $\Haus^{d-1}(\partial\Omega)$, see \eqref{eq:inradius_bound}.
\end{proof}

\begin{remark}
In fact, the convergence in Corollary \ref{shapeopt} holds not only for maximizers, but also for almost-maximizers $(\Omega_{\lambda, \gamma})_{\lambda>0}$ in the sense that $\Omega_{\lambda, \gamma}\subset\R^d$ is convex, open with $|\Omega_{\lambda, \gamma}|=1$ and 
$$
\limsup_{\lambda\to\infty} \lambda^{-\gamma-(d-1)/2} \bigl( \Tr(-\Delta_{\Omega_{\lambda, \gamma}}-\lambda)_\limminus^\gamma - S_\gamma \bigr) \geq 0 \, , 
$$
where $S_\gamma$ denotes the supremum in the corollary. This follows by a straightforward adaptation of the arguments above and in~\cite[Proposition~4.1]{Larson_JST}. 
\end{remark}

Let us now return to discussing Theorem~\ref{thm:MainTheorem}. This theorem improves earlier results from~\cite{FrankGeisinger_11, FrankGeisinger_12} where the asymptotics were shown for sets with $C^{1, \alpha}$ and $C^1$ boundary, respectively. As we will explain below in more detail, the technique of flattening the boundary from~\cite{FrankGeisinger_11, FrankGeisinger_12} cannot be used in the case of Lipschitz boundary, but a different and more robust technique is needed.

The Lipschitz condition on the boundary is essentially an optimal assumption. On the one hand, the result is optimal in the H\"older scale (because there are sets with $C^{0, \alpha}$ boundary for $\alpha<1$ for which $\Haus^{d-1}(\partial\Omega)$ is infinite) and on the other hand, the asymptotics~\eqref{eq:mainintro} are not valid for arbitrary sets for which $\Haus^{d-1}(\partial\Omega)$ is finite (for instance, for a ball divided in two pieces by a hyperplane the piece of the hyperplane contributes once to the measure of the boundary, but should contribute twice to the asymptotics). 

Moreover, within Lipschitz domains the error term $o(\lambda^{1+(d-1)/2})$ is the best possible on the algebraic scale: for any $\eps>0$ one can construct a Lipschitz domain $\Omega$ such that
\begin{equation}
  \limsup_{\lambda\to \infty}\,\lambda^{-1-(d-1)/2+\eps}\Bigl|\Tr(-\Delta_\Omega-\lambda)_\limminus-L_d|\Omega|\lambda^{1+d/2}+ \frac{L_{d-1}}{4}\Haus^{d-1}(\partial\Omega)\lambda^{1+(d-1)/2}\Bigr|=\infty\, .
\end{equation}
This follows by integration with respect to $\lambda$ from a construction mentioned in~\cite{Brown_93}.

Two-term spectral asymptotics under a Lipschitz assumption go back to the work \cite{Brown_93} by Brown, where it is shown that
\begin{equation}\label{eq:heat_asymptotics}
\Tr e^{t\Delta_\Omega} = \sum_{k\geq 1} e^{-t\lambda_k} = (4\pi t)^{-d/2}\Bigl(|\Omega|- \frac{\sqrt{\pi}}{2}\Haus^{d-1}(\partial\Omega)t^{1/2}+o(t^{1/2})\Bigr)
\qquad \mbox{as }t\to 0^\limplus \, .
\end{equation}
Note that \eqref{eq:heat_asymptotics} is an Abel-type average of \eqref{eq:asym_ivrii}, whereas \eqref{eq:mainintro} is a Ces\`aro-type average. It is well-known and easy to see that the asymptotics in \eqref{eq:mainintro} imply those in \eqref{eq:heat_asymptotics}, but not vice versa. The key insight in \cite{Brown_93} was to use ideas from geometric measure theory to decompose a neighborhood of the boundary into a `good' part and a `bad' part with sufficiently precise control on the size of the bad part. Inserting well-known pointwise bounds on the heat kernel into this decomposition one obtains \eqref{eq:heat_asymptotics}. While Brown's decomposition of a neighborhood of the boundary also plays an important role in our proof of \eqref{eq:mainintro}, we are facing the additional difficulty that we cannot work on a pointwise level. Thus, our main task is to show that Brown's geometric measure theory arguments can be combined with the technique of local trace asymptotics used in \cite{FrankGeisinger_11, FrankGeisinger_12}.

Let us sketch the overall strategy of the proof. As in~\cite{FrankGeisinger_11, FrankGeisinger_12} we first localize the operator $-\Delta_\Omega$ into balls whose size varies depending on the distance to $\Omega^c$. (As an aside we point out that our choice of the size of the balls here differs from that in~\cite{FrankGeisinger_11, FrankGeisinger_12}. It is both simpler and has a natural scaling behavior which is crucial for the proof of the uniform inequality in Theorem \ref{thm:asymptotic_ineq_convex}.) There are four different types of balls:
\begin{enumerate}[label=(\roman*)]
  \item\label{case:localizationI} $B\subset \Omega$, i.e.\ we have localized in the bulk of $\Omega$.
  \item\label{case:localizationII} $B\cap \Omega$ is empty, i.e.\ we have localized outside $\Omega$ (here the localized operator is trivially zero).
  \item\label{case:localizationIII} $B\cap \partial\Omega$ is non-empty and is in a certain sense well-behaved.
  \item\label{case:localizationIIII} cases~\ref{case:localizationI}-\ref{case:localizationIII} fail, i.e.\ the set $B\cap \partial\Omega$ is non-empty and fails to be well-behaved in the sense of~\ref{case:localizationIII}.
\end{enumerate}
Balls of type (i) are handled as in~\cite{FrankGeisinger_11, FrankGeisinger_12} and those of type (ii) are trivial. The precise sense in which balls of type (iii) and (iv) are distinguished follows the geometric construction due to Brown~\cite{Brown_93}.

Our analysis diverges from that in~\cite{FrankGeisinger_11, FrankGeisinger_12} when it comes to treating the region near the boundary. In~\cite{FrankGeisinger_11, FrankGeisinger_12} the types (iii) and (iv) were not distinguished. There, the bounds rely on the fact that if the boundary is sufficiently regular, then one can locally make a change of coordinates mapping the boundary to a hyperplane while retaining control of how the Laplacian is perturbed under this mapping. For Lipschitz boundaries this method cannot work; flattening the boundary requires a Lipschitz change of coordinates and can thus result in large perturbations of the Laplacian. 

The idea of distinguishing types (iii) and (iv) is in the spirit of Brown's decomposition of a neighborhood of the boundary into a large `good' and a small `bad' part. Essentially, Brown's geometric construction tells us in a quantitative manner that at a sufficiently small scale, the boundary is in most regions well approximated by a hyperplane. For these approximating hyperplanes we can proceed as in the smooth case. However, we are still left with controlling the error from the hyperplane approximation. This is dealt with by proving precise local spectral asymptotics for circular cones (which are the content of Lemma~\ref{lem:LocalAsympCone}).

This concludes our sketch of the proof of Theorem \ref{thm:MainTheorem}. We would like to emphasize that the methods that we develop in this paper are not limited to the situation at hand. In particular, the following three generalizations seem possible:

(1) For our proof it is not crucial that the boundary around \emph{any} point can be represented as a Lipschitz graph. For instance, we could treat domains with a finite number of cusps and also domains with slits (the second term in the asymptotics~\eqref{eq:mainintro} should be modified so that the measure of a slit is counted twice).

(2) Uniform inequalities similar to that in Theorem \ref{thm:asymptotic_ineq_convex} are probably valid also for other classes of domains. The essential ingredients here are Lemmas \ref{lem:boundary_region_convex} and \ref{lem:explicit_G_convex}. For example, analogues of these lemmas can probably be established for sets satisfying a uniform inner and outer ball condition. For such sets uniform bounds for the heat trace were shown in~\cite{vdBerg_87}.

(3) Ba\~nuelos, Kulczycki and Siudeja \cite{BanuelosEtAl_09} have generalized Brown's results for the heat kernel to the case of the fractional Laplacian. Similarly, \cite{FrankGeisinger_16} generalizes the results from  \cite{FrankGeisinger_11} for eigenvalue sums to the case of the fractional Laplacian. Combining these techniques one can probably extend the results in the present paper to the case of the fractional Laplacian.

\subsection*{Structure of the paper}

We begin by introducing some notation, recalling the machinery developed in~\cite{FrankGeisinger_11, FrankGeisinger_12} and proving some corollaries thereof. This is done in Section~\ref{sec:preliminaries}. In Section~\ref{sec:Geometric_constructions} we adapt the geometric constructions of~\cite{Brown_93} to the problem considered here. Section~\ref{sec:MainProof} is dedicated to the proof of Theorem~\ref{thm:MainTheorem} using the tools developed in Sections~\ref{sec:preliminaries} and~\ref{sec:Geometric_constructions}. We end the paper with the proof of Theorem~\ref{thm:asymptotic_ineq_convex} in Section~\ref{sec:uniformity_convex}.


\section{Notation and Preliminaries}
\label{sec:preliminaries} 

Throughout the paper we let $\dist(\, \cdot \, , \cdot\, )$ denote the distance between two sets in $\R^d$ (possibly singletons), that is, 
\begin{equation}
  \dist(A, B) = \inf_{x\in A,\, y\in B}|x-y|\, .
\end{equation}
Given a Lipschitz set $\Omega$ define $\signdist_\Omega(\, \cdot\, )$, the signed distance function of $\Omega$, by
\begin{equation}
  \signdist_\Omega(x) = \dist(x, \Omega^c)-\dist(x, \Omega)\, .
\end{equation}
Note that $\signdist_\Omega(\, \cdot\, )$ and $\dist(\, \cdot\, , \partial\Omega)$ satisfy almost everywhere
\begin{equation}\label{eq:Eikonal}
  |\nabla\!\signdist_\Omega(x)|=1\, , \qquad |\nabla\! \dist(x, \partial\Omega)|=1\, .
\end{equation}
Define also the inradius of $\Omega\subset \R^d$ by 
\begin{equation}
  r_{in}(\Omega)= \sup_{x\in \Omega} \dist(x, \Omega^c)\, .
\end{equation}

We recall that for a Lipschitz domain $\Omega\subset \R^d$ the functions defined by
\begin{align}
  \Emink_{inner}(\Omega, t)&= \frac{|\{u\in \Omega: \dist(u, \partial\Omega)<t\}|}{t\Haus^{d-1}(\partial\Omega)}-1\, , \\
  \Emink_{outer}(\Omega, t)&= \frac{|\{u\in \Omega^c: \dist(u, \partial\Omega)<t\}|}{t\Haus^{d-1}(\partial\Omega)}-1
\end{align}
are both $o(1)$ as $t\to 0^\limplus$~\cite{AmbrosioEtAl_08}.
In what follows we shall suppress~$\Omega$ in the notation and let this dependence be understood implicitly. We also define 
\begin{equation}\label{eq:defemink}
  \overline\Emink(t)=\frac{1}{2}\sup_{t_1, t_2\leq t}\bigl(|\Emink_{inner}(t_1)|+|\Emink_{outer}(t_2)|\bigr)
\end{equation}
so that
\begin{equation}\label{eq:Minkowski_content}
  \biggl|\frac{|\{u\in \R^d: \dist(u, \partial\Omega)<t\}|}{2t \Haus^{d-1}(\partial\Omega)}-1\biggr|\leq \overline\Emink(t)\, .
\end{equation}
The main contributions to the error term of Theorem~\ref{thm:MainTheorem} can be understood in terms of~$\Emink_{inner}(t), \Emink_{outer}(t)$ and $\overline\Emink(t)$.

In the following it will be convenient to introduce the operator 
\begin{equation}
  H_\Omega  =  -h^2 \Delta_\Omega - 1 \qquad\text{in}\ L^2(\Omega)
\end{equation}
with Dirichlet boundary conditions, depending on a parameter $h>0$. Technically,  
$H_\Omega$ is defined as a self-adjoint operator in $L^2(\Omega)$ via the quadratic form $\int_\Omega (h^2 |\nabla u|^2-|u|^2)\, dx$ with form domain $H^1_0(\Omega)$. We have
$$
\Tr(H_\Omega)_\limminus = h^2 \sum_{\lambda_k<h^{-2}} (h^{-2} - \lambda_k) = h^2 \Tr(-\Delta_\Omega-h^{-2})_\limminus \, , 
$$
and therefore the asymptotics in Theorem \ref{thm:MainTheorem} as $\lambda\to\infty$ can be rephrased equivalently as asymptotics for $\Tr(H_\Omega)_\limminus$ as $h \to 0^\limplus$. Similarly, the universal bound in Theorem \ref{thm:asymptotic_ineq_convex} can be rephrased equivalently as a universal bound for $\Tr(H_\Omega)_\limminus$.

For $\phi\in C^\infty(\R^d)$, define $\phi H_\Omega\phi$ as a self-adjoint operator in $L^2(\Omega)$ via the quadratic form $\int_\Omega (h^2 |\nabla (\phi u)|^2-|\phi u|^2)\, dx$ with form domain $H^1_0(\Omega)$.

Let us recall three results from \cite{FrankGeisinger_11, FrankGeisinger_12} concerning localized traces of $H_\Omega$.

\begin{lemma}[{Localized Berezin--Li--Yau inequality~\cite[Lemma~2.1]{FrankGeisinger_11}}]
  \label{lem:BerezinLiebLiYau}
  Let $\phi \in C_0^\infty(\R^d)$. Then, for all $h>0$, 
  \begin{equation}
    \Tr(\phi H_\Omega \phi)_\limminus \leq L_d h^{-d}\int_\Omega \phi^2(x)\, dx\, .
  \end{equation}
\end{lemma}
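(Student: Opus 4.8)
The plan is to reduce the localized estimate to the classical Berezin inequality by realizing the quadratic form of $\phi H_\Omega \phi$ in terms of the unlocalized operator. Concretely, for $u\in H^1_0(\Omega)$ one computes
\[
  \int_\Omega \bigl( h^2 |\nabla(\phi u)|^2 - |\phi u|^2 \bigr)\,dx
  = \int_\Omega \bigl( h^2 \phi^2 |\nabla u|^2 - \phi^2 |u|^2 \bigr)\,dx + h^2 \int_\Omega |\nabla \phi|^2 |u|^2\,dx + h^2 \int_\Omega \phi\,\nabla(\phi)\cdot\nabla(|u|^2)\,dx,
\]
and after an integration by parts the cross term combines with the $|\nabla\phi|^2$ term; this is the standard IMS-type localization identity. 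The cleaner route, however, is to work directly with the Birman--Schwinger / variational characterization: since $(\phi H_\Omega \phi)_\limminus$ has trace equal to $\sup \sum_j \langle \psi_j, \phi(1+h^2\Delta_\Omega)\phi\,\psi_j\rangle$ over orthonormal systems $\{\psi_j\}$ in $L^2(\Omega)$, and $\phi(1+h^2\Delta_\Omega)\phi = \phi^2 + h^2\phi\Delta_\Omega\phi$, one may bound each term using the pointwise diagonal of the spectral projection.

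The key step is the following. Let $P = \1_{(-\infty,0)}(\phi H_\Omega\phi)$ be the spectral projection onto the negative subspace; then $\Tr(\phi H_\Omega\phi)_\limminus = -\Tr\bigl(P\,(\phi H_\Omega\phi)\,P\bigr) \le \Tr\bigl(\phi P\phi\,(1+h^2\Delta_\Omega)\bigr)_{+\text{-part}}$, and more usefully one writes, with $G$ the integral kernel of $\phi P \phi$,
\[
  \Tr(\phi H_\Omega\phi)_\limminus \le \int_\Omega \phi(x)^2 \Bigl( \text{(local density)} \Bigr)\,dx.
\]
The mechanism is: extend everything to $\R^d$ by zero, use that $-\Delta_\Omega \ge -\Delta_{\R^d}$ in the form sense on $H^1_0(\Omega)$, and apply the sharp Berezin inequality on $\R^d$, namely that for any orthogonal projection $\Pi$ with $\Pi \le \1_{(-\infty,h^{-2})}(-\Delta_{\R^d})$ (in the appropriate sense) one has the pointwise bound on the kernel combined with $\Tr\bigl((1-h^2|\xi|^2)_+\,\cdot\,\bigr)$. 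Carrying the weight $\phi^2$ through the Fourier-side computation yields exactly $L_d h^{-d}\int_\Omega \phi^2\,dx$, since $\int_{\R^d}(2\pi)^{-d}(1-h^2|\xi|^2)_+\,d\xi = L_d h^{-d}$.

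The main obstacle is making the weighted reduction rigorous: one must control the non-commutativity of $\phi$ with $-\Delta_\Omega$ and verify that the localization identity above does not produce a term of the wrong sign that spoils the bound. The resolution is that the Berezin inequality is an inequality about \emph{one-body} densities, so it suffices to bound $\langle x | \phi P \phi | x\rangle$ pointwise; writing $\phi P \phi = \phi\, \1_{\{\phi H_\Omega \phi < 0\}}\,\phi$ and using that on the range of $P$ one has $h^2\langle \nabla(\phi\psi),\nabla(\phi\psi)\rangle < \langle \phi\psi,\phi\psi\rangle$, hence $\phi\psi$ lies (in an averaged sense) below the classical threshold, lets one apply the clean $\R^d$ estimate with $\phi u$ in place of $u$. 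Since this is precisely \cite[Lemma~2.1]{FrankGeisinger_11}, the remaining details are exactly those carried out there, and we refer to that paper for the computation.
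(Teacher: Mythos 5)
The paper does not actually prove this lemma; it is quoted verbatim from \cite[Lemma~2.1]{FrankGeisinger_11}, so ending with a deferral to that reference is consistent with the paper's treatment. However, the mechanism you sketch in between is off in the places where it is specific, and the one step that makes the lemma true never appears. First, the bound is not obtained from a pointwise bound on the spatial diagonal $\langle x|\phi P\phi|x\rangle$, nor from dominating the negative spectral projection $P$ of $\phi H_\Omega\phi$ by a free spectral projection $\1_{(-\infty,h^{-2})}(-\Delta_{\R^d})$: no such operator domination holds (the functions $\phi u_j$, with $u_j$ eigenfunctions of $\phi H_\Omega\phi$, are neither orthonormal nor spectrally localized for the free Laplacian), and none is needed. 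Second, the expression $\Tr\bigl(\phi P\phi(1+h^2\Delta_\Omega)\bigr)_{+\text{-part}}$ is not meaningful as written; what one has is the exact identity $\Tr(\phi H_\Omega\phi)_\limminus=-\sum_j\langle u_j,\phi H_\Omega\phi\, u_j\rangle$ for an orthonormal family $(u_j)\subset L^2(\Omega)$ of eigenfunctions with negative eigenvalues. Third, the IMS identity you start with is a dead end for this statement: it generates the term $h^2\int|\nabla\phi|^2|u|^2\,dx$ (equivalently $-h^2\int\phi\,\Delta\phi\,|u|^2\,dx$ after integrating by parts), which has no useful sign, while the final inequality contains no derivative of $\phi$ at all.

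The missing key step is a momentum-space Bessel inequality against the coherent states $\phi(x)e^{i\xi\cdot x}$, i.e.\ Berezin's argument carried out with the weight $\phi$. Since each $u_j$ lies in the form domain $H^1_0(\Omega)$ and $\phi\in C_0^\infty(\R^d)$, the function $\phi u_j$ extends by zero to $H^1(\R^d)$, and Plancherel gives
\begin{align}
\Tr(\phi H_\Omega\phi)_\limminus
&=\sum_j\int_\Omega\bigl(|\phi u_j|^2-h^2|\nabla(\phi u_j)|^2\bigr)dx
=\sum_j\int_{\R^d}\bigl(1-h^2|\xi|^2\bigr)\,\bigl|\widehat{\phi u_j}(\xi)\bigr|^2\,d\xi \\
&\leq\int_{\R^d}\bigl(1-h^2|\xi|^2\bigr)_\limplus\sum_j\bigl|\widehat{\phi u_j}(\xi)\bigr|^2\,d\xi\,,
\end{align}
and for each fixed $\xi$,
\begin{align}
\sum_j\bigl|\widehat{\phi u_j}(\xi)\bigr|^2
=(2\pi)^{-d}\sum_j\bigl|\langle u_j,\phi\,e^{i\xi\cdot}\rangle_{L^2(\Omega)}\bigr|^2
\leq(2\pi)^{-d}\int_\Omega\phi^2(x)\,dx
\end{align}
by Bessel's inequality applied to the orthonormal family $(u_j)$ — this is where orthonormality is used, and it is used for the $u_j$, not for $\phi u_j$. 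Since $(2\pi)^{-d}\int_{\R^d}(1-h^2|\xi|^2)_\limplus\,d\xi=L_d h^{-d}$, the claimed bound follows (running the argument over any finite subfamily of eigenfunctions shows in particular that the negative part is trace class). This is precisely the computation in \cite{FrankGeisinger_11}; your proposal points toward it but, as written, replaces it with steps that do not work.
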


\begin{lemma}[{\cite[Proposition~1.2]{FrankGeisinger_11}}]\label{lem:AsympBulk}
  Let $\phi\in C_0^\infty(\Omega)$ have support in a ball of radius $l>0$ and satisfy
  \begin{equation}
    \|\nabla \phi\|_{L^\infty}\leq Ml^{-1}\, .
  \end{equation} 
  Then, for all $h>0$, 
  \begin{equation}
    \Bigl|\Tr(\phi H_\Omega \phi)_\limminus - L_d h^{-d}\int_\Omega \phi^2(x)\, dx\Bigr| \leq C l^{d-2}h^{-d+2}\, , 
  \end{equation}
  with a constant $C$ depending only on $M$ and $d$.
\end{lemma}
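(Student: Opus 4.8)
The plan is to reduce the statement to the analogous bound for the Laplacian on the whole space $\R^d$. Since $\supp\phi$ is a ball $B$ of radius $l$ contained in $\Omega$, the quadratic form $\int_\Omega(h^2|\nabla(\phi u)|^2 - |\phi u|^2)\,dx$ only sees the values of $u$ on $B$, and for functions supported in $B$ the Dirichlet form on $\Omega$ and the form on $\R^d$ coincide. Hence $\Tr(\phi H_\Omega\phi)_\limminus = \Tr(\phi H_{\R^d}\phi)_\limminus$, where $H_{\R^d} = -h^2\Delta - 1$ on $L^2(\R^d)$. So it suffices to establish the claimed two-sided bound with $\Omega$ replaced by $\R^d$; note the main term $L_d h^{-d}\int\phi^2$ is unchanged because $\phi^2$ is supported in $\Omega$.

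The next step is a scaling reduction. Writing $x = l y$ and setting $\psi(y) = \phi(ly)$, one checks that $\Tr(\phi H_{\R^d}\phi)_\limminus$ scales so that the problem reduces to the case $l = 1$ with effective semiclassical parameter $h/l$; concretely, after renaming one has to prove $|\Tr(\psi(-\tilde h^2\Delta-1)\psi)_\limminus - L_d\tilde h^{-d}\int\psi^2| \le C\tilde h^{-d+2}$ for $\psi$ supported in a unit ball with $\|\nabla\psi\|_\infty \le M$, and then undo the scaling, which produces exactly the factor $l^{d-2}h^{-d+2}$. At this point the estimate is a standard semiclassical computation: using the symbol $\psi(y)^2(|\xi|^2-1)_\limminus$ and the sharp form of the Berezin--Li--Yau-type bound (Lemma \ref{lem:BerezinLiebLiYau}, giving the upper bound directly) together with a coherent-state / Schatten-norm lower bound (insert a partition of the phase space, or use the standard trick of comparing $(\psi H\psi)_\limminus$ with $\psi(H)_\limminus\psi$ via an IMS-type localization formula), one gets the leading term $L_d\tilde h^{-d}\int\psi^2$ with an error controlled by $\tilde h^{-d+2}\|\nabla\psi\|_\infty^2$ times the volume of the unit ball, hence by $C(M,d)\tilde h^{-d+2}$.

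For the lower bound, the cleanest route is the IMS localization identity: for a partition of unity $\sum_j\chi_j^2 = 1$ on $B$ at scale $\tilde h$, one writes $\psi H\psi = \sum_j \chi_j(\psi H\psi)\chi_j - \tilde h^2\sum_j(\nabla\chi_j)^2\psi^2$, uses the variational principle to pass to a sum of nearly-constant-coefficient problems on $\tilde h$-balls (where the explicit Weyl count is available), and collects the commutator errors; each commutator costs $\tilde h^2$ per unit volume and there are $O(\tilde h^{-d})$ cells, giving total error $O(\tilde h^{-d+2})$, and the Lipschitz bound on $\psi$ ensures the constant depends only on $M$ and $d$. The main obstacle is bookkeeping: one must make sure that the constant genuinely depends only on $M$ and $d$ and not on $l$ or $h$ — this is precisely what the scaling reduction is designed to guarantee — and that the partition-of-unity errors are summed with the correct power of $\tilde h$. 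Since this is exactly \cite[Proposition~1.2]{FrankGeisinger_11}, one may alternatively simply invoke that reference after the localization-to-$\R^d$ observation, which is the shortest path.
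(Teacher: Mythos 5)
First, note that the paper itself does not prove this lemma: it is recalled verbatim from \cite[Proposition~1.2]{FrankGeisinger_11}, so your closing remark (just invoke the reference) is exactly the paper's route. Moreover, the sketch in your first paragraph is essentially the proof given there: the upper bound is the localized Berezin--Li--Yau inequality (Lemma~\ref{lem:BerezinLiebLiYau}), and the lower bound comes from the variational principle with the plane-wave (coherent-state) trial density matrix $\gamma=\1(-h^2\Delta<1)$, for which one computes $-\Tr(\phi H_\Omega\phi\,\gamma)=L_dh^{-d}\int\phi^2-(2\pi h)^{-d}\omega_d\,h^2\int|\nabla\phi|^2$, and $h^2\int|\nabla\phi|^2\leq CM^2l^{d-2}h^2$; this trial matrix is admissible because multiplication by $\phi\in C_0^\infty(\Omega)$ maps into the (closed) form domain, which is also the place where $\supp\phi\subset\Omega$ enters. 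Two small corrections to your set-up: the hypothesis only says $\supp\phi\subset\Omega$ and $\supp\phi$ is contained in \emph{some} ball of radius $l$ (that ball need not lie in $\Omega$), and the identity $\Tr(\phi H_\Omega\phi)_\limminus=\Tr(\phi H_{\R^d}\phi)_\limminus$, while true, needs a word about the two different Hilbert spaces and form domains (eigenfunctions to negative eigenvalues are supported in $\supp\phi$, and $\phi\cdot H^1(\R^d)=\phi\cdot H^1_0(\Omega)$); in fact the reduction is unnecessary, since both ingredients above apply directly to $\phi H_\Omega\phi$.

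The genuine problem is the route you call ``cleanest'', the IMS partition at scale $\tilde h$; as described it fails. (a) The bookkeeping is wrong: with cells of size $t$ one has $|\nabla\chi_j|^2\sim t^{-2}$, so the IMS remainder is $\tilde h^2\sum_j|\nabla\chi_j|^2\psi^2\sim(\tilde h/t)^2\psi^2$, which for $t\sim\tilde h$ is of order one pointwise and contributes $O(\tilde h^{-d})$ to the trace --- the same size as the main term, not $O(\tilde h^{-d+2})$; the claim ``each commutator costs $\tilde h^2$ per unit volume'' ignores the factor $t^{-2}$. (b) There is no usable ``explicit Weyl count'' for the one-cell operators $\chi_j\psi H\psi\chi_j$ on balls of radius $\sim\tilde h$: at that scale the local eigenvalue count has relative error of order one, and $\chi_j\psi H\psi\chi_j$ is not a Dirichlet problem on the cell, so evaluating its negative trace requires precisely the kind of statement you are trying to prove --- the argument is circular unless the localization scale is taken much larger than $\tilde h$, which is exactly what Lemma~\ref{lem:localization} does in the paper. (c) The directions of the inequalities are muddled: for the \emph{lower} bound on $\Tr(\psi H\psi)_\limminus$ the estimate $\Tr(A)_\limminus\geq\sum_j\Tr(\chi_jA\chi_j)_\limminus$ follows from the trial density matrix $\sum_j\chi_j\gamma_j\chi_j$ with no IMS correction at all (this is the easy half of the proof of Lemma~\ref{lem:localization} in Appendix~A); the IMS remainder is needed for the opposite direction. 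So the self-contained argument that actually yields the error $Cl^{d-2}h^{-d+2}$ is the coherent-state computation you mention only in passing, not the $\tilde h$-cell decomposition.
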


\begin{lemma}[{\cite[Proposition~1.3]{FrankGeisinger_11}, \cite[Proposition~2.3]{FrankGeisinger_12}}]\label{lem:LocalAsympGraph}
  Let $\phi\in C_0^\infty(\R^d)$ have support in a ball of radius $l>0$ and satisfy
  \begin{equation}
    \|\nabla \phi\|_{L^\infty}\leq Ml^{-1}\, .
  \end{equation}
  Assume that $\partial\Omega \cap \supp \phi$ can be represented as a graph $x_d=f(x')$ and that there is a point $(y', y_d)\in\partial\Omega\cap\supp\phi$ with $\nabla f(y')=0$ and 
  $$
  |\nabla f(x')|\leq \omega(|x'-y'|)
  \qquad\text{for all}\ (x', x_d)\in\partial\Omega\cap\supp\phi \, , 
  $$
  where $\omega\colon [0, \infty) \to [0, \infty)$ is non-decreasing and $\lim_{\delta\to0^\limplus}\omega(\delta)=0$. Then, if $\omega(l)\leq C_d$ and $0<h\leq l$, 
  \begin{equation}
    \biggl| \Tr(\phi H_\Omega \phi) - L_d h^{-d}\int_\Omega \phi^2(x)\, dx + \frac{L_{d-1}}{4}h^{-d+1}\int_{\partial\Omega}\phi^2(x)\, d\Haus^{d-1}(x)\biggr| \leq 
    C \frac{l^d}{h^d}\biggl( \frac{h^2}{l^2}+\omega(l)\biggr)\, , 
  \end{equation}
  where the constant $C_d$ is universal and the constant $C$ depends only on $M$ and $d$.
\end{lemma}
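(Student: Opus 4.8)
The plan is to reduce, by a measure-preserving bi-Lipschitz flattening of the boundary, to the model case of a half-space, for which the two-term asymptotics can be written down explicitly, and then to absorb the error produced by the flattening by an operator-bracketing argument. The hypothesis $\omega(l)\le C_d$ (with $C_d$ small) is exactly what makes the flattening a diffeomorphism with controlled constants.

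First I would use the graph hypothesis to replace $\Omega$ by a genuine epigraph. Since $\partial\Omega\cap\supp\phi$ is the graph $x_d=f(x')$ and $\supp\phi$ lies in a ball of radius $l$, I extend $f$ to a globally Lipschitz function on $\R^{d-1}$ with Lipschitz constant $\lesssim\omega(l)$ and set $\tilde\Omega=\{x_d>f(x')\}$ (or its complement, depending on which side of the graph $\Omega$ lies on); then $\Tr(\phi H_\Omega\phi)_-=\Tr(\phi H_{\tilde\Omega}\phi)_-$ and the integrals over $\Omega,\partial\Omega$ agree with those over $\tilde\Omega,\partial\tilde\Omega$ because the two sets coincide on $\supp\phi$. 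Next apply $\Psi(x',x_d)=(x',x_d-f(x'))$, which maps $\tilde\Omega$ onto $\R^d_+:=\{y\in\R^d:y_d>0\}$, has Jacobian $\equiv 1$, and is bi-Lipschitz with controlled constants. Under $\Psi$ the form $\int(h^2|\nabla u|^2-|u|^2)$ becomes $\int_{\R^d_+}(h^2\,A\nabla v\cdot\nabla v-|v|^2)\,dy$ with $v=u\circ\Psi^{-1}$ and a symmetric matrix $A$ satisfying $\|A-I\|_{L^\infty}\le C\omega(l)$; the cutoff becomes $\tilde\phi=\phi\circ\Psi^{-1}$, supported in a ball of radius $\lesssim l$ with $\|\nabla\tilde\phi\|_{L^\infty}\lesssim Ml^{-1}$; and one has $\int_{\tilde\Omega}\phi^2=\int_{\R^d_+}\tilde\phi^2$ and $\int_{\partial\tilde\Omega}\phi^2\,d\Haus^{d-1}=\int_{\{y_d=0\}}\tilde\phi^2\sqrt{1+|\nabla f|^2}\,dy'=\int_{\{y_d=0\}}\tilde\phi^2\,dy'+O(\omega(l)^2 l^{d-1})$.

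The core is the flat model: for $\psi$ Lipschitz, supported in a ball of radius $\ell$ with $\|\nabla\psi\|_{L^\infty}\le M\ell^{-1}$, and every $h>0$,
\begin{equation}
\Bigl|\Tr\bigl(\psi(-h^2\Delta_{\R^d_+}-1)\psi\bigr)_- - L_d h^{-d}\!\int_{\R^d_+}\!\psi^2 + \tfrac{L_{d-1}}{4}h^{-d+1}\!\int_{\{y_d=0\}}\!\psi^2\Bigr|\le C\,\ell^{d-2}h^{-d+2}\,.
\end{equation}
This is proved by scaling out $h$ and then using the explicit spectral resolution of the Dirichlet Laplacian on $\R^d_+$ (Fourier transform in $y'$, generalized eigenfunctions $\sin(\xi_d y_d)$ in the normal variable): the bulk term reproduces $L_d\int\psi^2$, the reflection term contributes exactly $-\tfrac{L_{d-1}}{4}\int_{\{y_d=0\}}\psi^2$, and the error from the oscillation of $\psi$ across the ball is controlled as in Lemma~\ref{lem:AsympBulk}. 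This is the content of [FG11, Proposition~1.3] in the flat case and may be quoted from there. To finish, I remove $A$ by bracketing: on $\supp\tilde\phi$ one has $(1-C\omega(l))|\nabla w|^2\le A\nabla w\cdot\nabla w\le(1+C\omega(l))|\nabla w|^2$, hence with $H_A=-h^2\nabla\cdot A\nabla-1$,
\begin{equation}
\tilde\phi\bigl((1+C\omega(l))(-h^2\Delta_{\R^d_+})-1\bigr)\tilde\phi\le\tilde\phi H_A\tilde\phi\le\tilde\phi\bigl((1-C\omega(l))(-h^2\Delta_{\R^d_+})-1\bigr)\tilde\phi\,.
\end{equation}
Since $T\mapsto\Tr(T)_-$ is order-reversing, $\Tr(\tilde\phi H_A\tilde\phi)_-$ is sandwiched between $\Tr(\tilde\phi(-h_\pm^2\Delta_{\R^d_+}-1)\tilde\phi)_-$ with $h_\pm^2=(1\mp C\omega(l))h^2$. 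Applying the flat asymptotics to each bound and using $h_\pm^{-d}=h^{-d}(1+O(\omega(l)))$, $\int_{\R^d_+}\tilde\phi^2\lesssim l^d$, $\int_{\{y_d=0\}}\tilde\phi^2\lesssim l^{d-1}$, and $h\le l$ (so $h^{-d+1}l^{d-1}\le h^{-d}l^d$) gives
\begin{equation}
\Tr(\tilde\phi H_A\tilde\phi)_-=L_d h^{-d}\!\int_{\R^d_+}\!\tilde\phi^2-\tfrac{L_{d-1}}{4}h^{-d+1}\!\int_{\{y_d=0\}}\!\tilde\phi^2+O\!\Bigl(\tfrac{l^d}{h^d}\bigl(\tfrac{h^2}{l^2}+\omega(l)\bigr)\Bigr)\,.
\end{equation}
Substituting the identities from the flattening step (the surface-measure error $h^{-d+1}\omega(l)^2 l^{d-1}\le h^{-d}l^d\omega(l)$ is absorbed) and recalling $\Tr(\tilde\phi H_A\tilde\phi)_-=\Tr(\phi H_\Omega\phi)_-$ yields the claim.

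The main obstacle is the flat model estimate: one has to verify that the Dirichlet reflection contributes precisely the coefficient $L_{d-1}/4$ and that the error from the variation of the cutoff is genuinely $O(\ell^{d-2}h^{-d+2})$ uniformly for $h\le\ell$, which requires the explicit half-line spectral representation and careful bookkeeping. A secondary, purely technical, point is that $\Psi$ is built from a merely Lipschitz $f$, so the bounds on $A$, $\tilde\phi$ and the surface-measure distortion must be obtained without using second derivatives of $f$; the bracketing step itself is soft.
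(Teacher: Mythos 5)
The paper does not actually prove this lemma: it quotes it from \cite{FrankGeisinger_11,FrankGeisinger_12} and only adds the adjustments recorded in Remark~\ref{rem:fg}. Your argument follows the same basic route as those cited proofs --- straighten the boundary with $(x',x_d)\mapsto(x',x_d-f(x'))$ and compare with the explicit half-space model (your ``flat model'' is \cite[Lemma~2.3]{FrankGeisinger_11}, the very result this paper also invokes in Section~\ref{sec:MainProof}, so quoting it is legitimate) --- but you dispose of the resulting coefficient matrix $A$ by the crude two-sided bound $(1-C\omega(l))I\le A\le(1+C\omega(l))I$ together with a rescaling of $h$, rather than by the finer perturbative estimate in \cite{FrankGeisinger_11,FrankGeisinger_12}, whose separate treatment of the off-diagonal terms is what produces the extra error $\omega(l)^2h/l$ that Remark~\ref{rem:fg} has to absorb. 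Your cruder step is sufficient here: it introduces a relative error $O(\omega(l))$ on the volume term, i.e.\ an absolute error of size $\omega(l)\,l^d h^{-d}$, which is exactly the allowed error, and the boundary term and the flat-model error are handled using $h\le l$ as you say. So the proposal is correct in substance, and slightly simpler than the source precisely because the stated error bound tolerates a term linear in $\omega(l)$.

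Two points need repair, neither fatal. First, your displayed operator inequalities are reversed: since $A\ge(1-C\omega(l))I$ pointwise, the correct form inequality is $\tilde\phi\bigl((1-C\omega(l))(-h^2\Delta_{\R^d_+})-1\bigr)\tilde\phi\le\tilde\phi H_A\tilde\phi\le\tilde\phi\bigl((1+C\omega(l))(-h^2\Delta_{\R^d_+})-1\bigr)\tilde\phi$, so after applying the order-reversing map $T\mapsto\Tr(T)_\limminus$ the two sandwiching traces are interchanged relative to what you wrote; the endpoints are the same two quantities, so the final estimate is unchanged. Second, the reduction to a global epigraph needs care: $f$ is only given on the projection of $\partial\Omega\cap\supp\phi$, and an arbitrary Lipschitz (McShane) extension could pass through parts of $\supp\phi$ lying over points outside that projection, destroying the identity $\Omega\cap\supp\phi=\tilde\Omega\cap\supp\phi$ on which the whole reduction rests. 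Either construct the extension so that over such points it stays outside $\supp\phi$, or read the hypothesis (as the cited proofs effectively do, cf.\ \cite[eq.~(4.1)]{FrankGeisinger_12} and Remark~\ref{rem:fg}) as asserting that $\Omega$ coincides with an epigraph of $f$ on $\supp\phi$. Relatedly, the uniform gradient bound one actually gets on $\supp\phi$ from the hypothesis is $\omega(2l)$ rather than $\omega(l)$, since two points of a ball of radius $l$ may be $2l$ apart; this affects any proof from these hypotheses equally and is harmless in the paper's application (where $\omega$ is linear), but it is worth acknowledging when you write $\omega(l)$ for the Lipschitz constant of the extension.
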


\begin{remark}\label{rem:fg}
This result appears in \cite{FrankGeisinger_11} in the special case $\omega(\delta)=C\delta^\alpha$. The case of a general function $\omega$ appears in \cite{FrankGeisinger_12}, but for the Laplacian with Robin boundary conditions. The proof there, however, extends immediately to the case of Dirichlet boundary conditions. Moreover, a slightly stronger assumption on the parametrization is made in these papers, but only the above one is used, see \cite[Equation (4.1)]{FrankGeisinger_12}. Also, the analysis in \cite{FrankGeisinger_11, FrankGeisinger_12} leads to an additional error term $\omega(l)^2 h/l$ in the parentheses on the right side, but since
$$
\frac{\omega(l)^2h}{l} \leq \frac12 \frac{h^2}{l^2} + \frac12 \omega(l)^4 \leq \frac12 \frac{h^2}{l^2} + \frac{C_d^3}2 \omega(l)
$$
this term is controlled by the other two terms in the parentheses. Finally, there are the following two minor changes. In~\cite{FrankGeisinger_11, FrankGeisinger_12} it is stated that the constant $C$ depends, in addition, on $\|\phi\|_{L^\infty}$ and $\Omega$. However, since $\phi$ has support in a ball of radius $l$ one easily finds $|\phi(x)|\leq l \|\nabla\phi\|_{L^\infty}$, so $\|\phi\|_{L^\infty}\leq M$, and an upper bound on $\|\phi\|_{L^\infty}$ was all that entered in the proof in \cite{FrankGeisinger_12}. Moreover, an inspection of the proof shows that the dependence on~$\Omega$ enters only through the modulus of continuity $\omega$ and that, in fact, only $\omega(l)\leq C_d$ is needed.
\end{remark}

Next, we recall a result of Solovej and Spitzer which provides a family of localization functions adapted to a given local length scale.
\begin{lemma}[{\cite[Theorem~22]{SolovejSpitzer}}]\label{solovejspitzer}
Let $\phi\in C_0^\infty(\R^d)$ with support in $\overline{B_1(0)}$ and $\|\phi\|_{L^2}=1$ and let $l$ be a bounded, positive Lipschitz function on $\R^d$ with Lipschitz constant $\|\nabla l\|_{L^\infty}<1$. Let
\begin{equation}
  \phi_u(x)= \phi\Bigl(\frac{x-u}{l(u)}\Bigr) \sqrt{ 1+ \nabla l(u)\cdot \frac{x-u}{l(u)} } \, .
\end{equation}
Then
\begin{equation}
\label{eq:phi_properties1}
\int_{\R^d} \phi_u(x)^2 l(u)^{-d}\, du = 1
\qquad\text{for all}\ x\in\R^d
\end{equation}
and
\begin{equation}\label{eq:phi_properties}
  \|\phi_u\|_{L^\infty}\leq \sqrt 2\, \|\phi\|_{L^\infty} \quad\text{and}\quad \|\nabla \phi_u\|_{L^\infty} \leq Cl(u)^{-1} \|\nabla\phi\|_{L^\infty}
\quad\text{for all}\ u\in\R^d  \, , 
\end{equation}
where the constant $C$ depends only on $(1-\|\nabla l\|_{L^\infty})^{-1}$.
\end{lemma}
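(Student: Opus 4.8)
The plan is to verify the three assertions in turn; the first two are elementary pointwise estimates, while the partition-of-unity identity \eqref{eq:phi_properties1} is the substantive point and is obtained by a change of variables.

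First I would record the support and size of $\phi_u$. Since $\supp\phi\subseteq\overline{B_1(0)}$, the function $\phi_u$ vanishes unless $|x-u|\leq l(u)$, and on that set
$$\Bigl|\nabla l(u)\cdot\tfrac{x-u}{l(u)}\Bigr|\leq\|\nabla l\|_{L^\infty}\,\frac{|x-u|}{l(u)}\leq\|\nabla l\|_{L^\infty}<1\,,$$
so the radicand $1+\nabla l(u)\cdot\tfrac{x-u}{l(u)}$ lies in $(1-\|\nabla l\|_{L^\infty},\,1+\|\nabla l\|_{L^\infty})\subset(0,2)$; in particular $\|\phi_u\|_{L^\infty}\leq\sqrt2\,\|\phi\|_{L^\infty}$. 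For the gradient I would differentiate in $x$ by the product rule,
$$\nabla_x\phi_u(x)=\frac{1}{l(u)}(\nabla\phi)\Bigl(\tfrac{x-u}{l(u)}\Bigr)\sqrt{1+\nabla l(u)\cdot\tfrac{x-u}{l(u)}}+\phi\Bigl(\tfrac{x-u}{l(u)}\Bigr)\,\frac{\nabla l(u)}{2\,l(u)\sqrt{1+\nabla l(u)\cdot\tfrac{x-u}{l(u)}}}\,,$$
and bound the two terms separately: the first by $\sqrt2\,l(u)^{-1}\|\nabla\phi\|_{L^\infty}$ using $1+\nabla l(u)\cdot\tfrac{x-u}{l(u)}<2$, and the second using that a $C_0^\infty$ function with support in $\overline{B_1(0)}$ in fact vanishes on $\{|y|\geq1\}$, so that $|\phi(y)|\leq\|\nabla\phi\|_{L^\infty}(1-|y|)_+\leq\|\nabla\phi\|_{L^\infty}$; together with the lower bound $1-\|\nabla l\|_{L^\infty}$ on the radicand this gives a bound $\tfrac12\|\nabla l\|_{L^\infty}(1-\|\nabla l\|_{L^\infty})^{-1/2}\,l(u)^{-1}\|\nabla\phi\|_{L^\infty}$. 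Adding the two contributions yields \eqref{eq:phi_properties} with $C=\sqrt2+\tfrac12\|\nabla l\|_{L^\infty}(1-\|\nabla l\|_{L^\infty})^{-1/2}$, which depends only on $(1-\|\nabla l\|_{L^\infty})^{-1}$.

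For \eqref{eq:phi_properties1}, fix $x\in\R^d$ and consider the map $T_x\colon\R^d\to\R^d$, $T_x(u)=\tfrac{x-u}{l(u)}$, and the bounded open set $U=\{u:|T_x(u)|<1\}$, which contains the $u$-support of $\phi_u(x)$. Wherever $\nabla l$ exists one computes $DT_x(u)=-l(u)^{-1}\bigl(I+T_x(u)\otimes\nabla l(u)\bigr)$, hence by the matrix determinant lemma $\det DT_x(u)=(-1)^d l(u)^{-d}\bigl(1+\nabla l(u)\cdot T_x(u)\bigr)$, and on $U$ the last factor is positive, so
$$\phi_u(x)^2\,l(u)^{-d}=\phi(T_x(u))^2\bigl(1+\nabla l(u)\cdot T_x(u)\bigr)l(u)^{-d}=\phi(T_x(u))^2\,|\det DT_x(u)|\,.$$
It then remains to show that $T_x$ restricts to a homeomorphism of $U$ onto $B_1(0)$ with Lipschitz inverse, after which the change-of-variables formula gives $\int_U\phi(T_x(u))^2|\det DT_x(u)|\,du=\int_{B_1(0)}\phi(v)^2\,dv=\|\phi\|_{L^2}^2=1$. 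Injectivity of $T_x$ on $U$ is immediate: if $T_x(u)=T_x(u')=v$ then $u'-u=v\,(l(u)-l(u'))$, so $|u-u'|\leq|v|\,\|\nabla l\|_{L^\infty}|u-u'|$ with $|v|\,\|\nabla l\|_{L^\infty}<1$, forcing $u=u'$. For surjectivity onto $B_1(0)$ and continuity of the inverse I would solve, for each $v$ with $|v|<1$, the equation $\Psi_v(u):=u+v\,l(u)=x$: the map $\Psi_v$ is injective by the same Lipschitz estimate and proper because $l$ is bounded, hence a homeomorphism of $\R^d$ by invariance of domain, and $u=\Psi_v^{-1}(x)$ is then the unique point of $U$ with $T_x(u)=v$; a similar estimate shows $v\mapsto\Psi_v^{-1}(x)$ is Lipschitz.

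The main obstacle is the change-of-variables step just sketched: since $l$ is only Lipschitz, $T_x$ is merely a Lipschitz homeomorphism rather than $C^1$, so one must invoke the area formula in the form valid for Lipschitz maps instead of the classical inverse function theorem, and the global bijectivity of $T_x$ on $U$—equivalently of the maps $\Psi_v$ on $\R^d$—has to be argued topologically (invariance of domain together with properness) rather than locally. One should also keep in mind throughout that $\phi_u$ is only meaningfully defined for $u$ in the full-measure set where $\nabla l(u)$ exists, which is immaterial both for the pointwise bounds and for the integral in \eqref{eq:phi_properties1}. The remaining computations—the Jacobian identity and the two-term bound on $\nabla_x\phi_u$—are routine once the support restriction $|x-u|\leq l(u)$ is in hand.
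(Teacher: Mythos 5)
Your proof is correct, and for the key identity \eqref{eq:phi_properties1} it rests on the same idea as the paper --- the Jacobian identity $\phi_u(x)^2 l(u)^{-d}=\phi(T_x(u))^2|\det DT_x(u)|$ followed by a change of variables --- but you establish bijectivity of $u\mapsto (x-u)/l(u)$ from $U$ onto $B_1(0)$ by a genuinely different route. The paper (after normalizing $x=0$) restricts to rays $t\mapsto tu$, writes $F(tu)=-g(t)u$ with $g(t)=t/l(tu)$, and deduces injectivity from a.e.\ monotonicity of $g$ on the region where $|g(t)||u|\,\|\nabla l\|_{L^\infty}<1$, and surjectivity from an intermediate-value argument along each ray; you instead get injectivity from the direct contraction estimate $|u-u'|\le |v|\,\|\nabla l\|_{L^\infty}|u-u'|$ and surjectivity by solving $u+v\,l(u)=x$ via properness plus invariance of domain. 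Both arguments hinge on the same smallness $|v|\,\|\nabla l\|_{L^\infty}<1$; your injectivity step is arguably cleaner, and your surjectivity step could be made even more elementary by noting that $u\mapsto x-v\,l(u)$ is a contraction, so Banach's fixed point theorem gives the unique solution without any topology (also, Lipschitz continuity of $v\mapsto \Psi_v^{-1}(x)$ is not needed: the area formula only requires the Lipschitz map $T_x$ to be injective on $U$). One point you should make explicit when invoking the area formula is that $T_x$ is in fact Lipschitz on $U$: this holds because $U\subset B_{\|l\|_{L^\infty}}(x)$ and, for $|x-u|\le l(u)$, the bound $l(u)\ge l(x)-\|\nabla l\|_{L^\infty}|x-u|$ gives $l(u)\ge l(x)/(1+\|\nabla l\|_{L^\infty})>0$; the paper is terser here (it simply says ``the desired equality follows by a change of variables''), so your explicitness about the Lipschitz area formula and the a.e.\ definition of $\phi_u$ is a plus. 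The $L^\infty$ and gradient bounds are handled essentially identically in both proofs (the paper treats them as immediate after differentiating, using $\|\phi\|_{L^\infty}\le\|\nabla\phi\|_{L^\infty}$, which is your estimate $|\phi(y)|\le\|\nabla\phi\|_{L^\infty}(1-|y|)_+$).
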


\begin{remark}
  Strictly speaking, the functions $\phi_u$ are defined only for almost every $u\in\R^d$, namely, for those where $\nabla l(u)$ exists. Note that if $(x-u)/l(u)\in\supp\phi$, then $|\nabla l(u)\cdot (x-u)/l(u)|\leq \|\nabla l\|_{L^\infty}< 1$. Therefore the square root in the definition of $\phi_u$ is well-defined and $\phi_u\in C_0^\infty(\R^d)$.
\end{remark}

\begin{remark}
  The assumptions of Lemma~\ref{solovejspitzer} are weaker than those in~\cite{SolovejSpitzer}. However, the proof in~\cite{SolovejSpitzer} applies with almost no change, but for completeness we include it below. 
  Moreover, the definition of $\phi_u$ in \cite{SolovejSpitzer} reads
  $$
  \phi_u(x) = l(u)^{d/2} \phi((x-u)/l(u)) \sqrt{J(x, u)} \, , 
  $$
  where $J(x, u)$ is the absolute value of the Jacobi determinant of the map $u\mapsto (x-u)/l(u)$, that is, 
  $$
  J(x, u) = l(u)^{-d} \biggl| {\det\biggl(1 + \nabla l(u)\otimes \frac{x-u}{l(u)} \biggr)} \biggr|\, .
  $$
  Computing the determinant one arrives at the above formula (which will be important for us later on). 
\end{remark}

\begin{proof}[Proof of Lemma~\ref{solovejspitzer}]
  Without loss of generality we assume that $x=0$. In order to prove~\eqref{eq:phi_properties1} we shall show that the map $F\colon \R^d\to \R^d$ given by $F(u)=-u/l(u)$ is a bijection of $F^{-1}(B_1(0))$ onto $B_1(0)$. After this is established the desired equality follows by a change of variables since
  \begin{equation}
    l(u)^{-d}\Bigl(1+ \nabla l(u)\cdot \frac{x-u}{l(u)} \Bigr) = J(x, u),
  \end{equation}
  where $J(x, u)$ is the absolute value of the Jacobi determinant of the map $u\mapsto (x-u)/l(u)$.

  Fix $u\in \R^d$, since $|F(u)|\geq |u|/\|l\|_{L^\infty}$ and $F(0)=0$ there exists a $t\in [-\|l\|_{L^\infty}, 0]$ such that $F(tu)=u$. Consequently $F$ is surjective.

  That the map is injective on $F^{-1}(B_1(0))$ can be seen as follows. Fix $u\neq 0$. We can write $F(tu)=-g(t)u$ where $g\colon \R\to \R$ is a continuous function, indeed $g(t)=t/l(tu)$. Moreover, we claim that $g$ is monotone increasing for all $t$ such that $|F(t u)|=|g(t)||u|< \|\nabla l\|_{L^\infty}^{-1}$, and in particular for $t$ such that $|F(t u)|=|g(t)||u|\leq 1$. For almost every $t$ it holds that
  \begin{align}
    g'(t) =
     l(t u)^{-1}[1-tl(t u)^{-1}u\cdot \nabla l(t u)]
     \geq 
      l(t u)^{-1}[1-|g(t)||u|\|\nabla l\|_{L^\infty}]
      >0,
  \end{align}
  which proves the claim.
  We conclude that $F$ is a bijection from $F^{-1}(B_1(0))$ to $B_1(0)$.

  Differentiating the formula for $\phi_u$ and using $\|\phi\|_{L^\infty}\leq\|\nabla\phi\|_{L^\infty}$ (see Remark~\ref{rem:fg}) one immediately obtains~\eqref{eq:phi_properties}.
\end{proof}

\begin{lemma}[{Localization}]\label{lem:localization}
Let $\phi$ and $l$ be as in Lemma~\ref{solovejspitzer}. Then, for any $\varphi\in C^\infty(\R^d)$ and all $0<h \leq M \min_{\dist(u, \Omega\,\cap\, \supp \varphi)\leq l(u)}l(u)$, 
  \begin{equation}\label{eq:localization_lemma}
  \begin{aligned}
    \Bigl|\Tr(\varphi &H_\Omega \varphi)_\limminus-\int_{\R^d}\Tr(\phi_u\varphi H_\Omega \varphi\phi_u)_\limminus l(u)^{-d}\, du\Bigr| \\
    &\leq 
    C\|\varphi\|_{L^\infty(\Omega)}^2 h^{-d+2}\int_{\dist(u, \Omega\, \cap \, \supp \varphi)\leq l(u)} l(u)^{-2}\, du\, , 
  \end{aligned}
  \end{equation}
  where the constant depends only on $\|\nabla\phi\|_{L^\infty}$, $(1-\|\nabla l\|_{L^\infty})^{-1}, M$ and $d$.
\end{lemma}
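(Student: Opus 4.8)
The plan is to sandwich $\Tr(\varphi H_\Omega\varphi)_\limminus$ between $\int_{\R^d}\Tr(\phi_u\varphi H_\Omega\varphi\phi_u)_\limminus\,l(u)^{-d}\,du$ (as an exact lower bound) and the same quantity plus an error of the asserted order. The starting point is an IMS-type localization identity: writing $\nabla(\varphi\phi_u f)=\phi_u\nabla(\varphi f)+(\varphi f)\nabla\phi_u$ in the quadratic form of $\phi_u\varphi H_\Omega\varphi\phi_u$ and integrating against $l(u)^{-d}\,du$, the cross term vanishes because \eqref{eq:phi_properties1} gives $\int_{\R^d}\phi_u(x)^2 l(u)^{-d}\,du\equiv 1$ and hence $\int_{\R^d}\phi_u(x)\nabla\phi_u(x)\,l(u)^{-d}\,du\equiv 0$. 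This yields, as an identity of quadratic forms on $H^1_0(\Omega)$,
\[
  \int_{\R^d}\phi_u\varphi H_\Omega\varphi\phi_u\,l(u)^{-d}\,du=\varphi H_\Omega\varphi+h^2 W,
\]
where $W$ is multiplication by $W(x)=\varphi(x)^2 G(x)$ with $G(x)=\int_{\R^d}|\nabla\phi_u(x)|^2 l(u)^{-d}\,du\ge 0$; here $\phi_u\varphi H_\Omega\varphi\phi_u$ and $(\varphi\phi_u)H_\Omega(\varphi\phi_u)$ coincide because multiplication operators commute.

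For the lower bound, set $\gamma_u=\1_{\phi_u\varphi H_\Omega\varphi\phi_u<0}$ (for a.e.\ $u$) and $\gamma=\int_{\R^d}\phi_u\gamma_u\phi_u\,l(u)^{-d}\,du$; by \eqref{eq:phi_properties1} one has $0\le\gamma\le\int_{\R^d}\phi_u^2 l(u)^{-d}\,du=1$, so $\gamma$ is an admissible trial state and the variational principle together with the trace identities gives $\Tr(\varphi H_\Omega\varphi)_\limminus\ge -\Tr(\varphi H_\Omega\varphi\,\gamma)=\int_{\R^d}\Tr(\phi_u\varphi H_\Omega\varphi\phi_u)_\limminus\,l(u)^{-d}\,du$. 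For the upper bound, take instead $\gamma=\1_{\varphi H_\Omega\varphi<0}$, which is admissible for each localized operator; combining the identity above with $-\Tr((\phi_u\varphi H_\Omega\varphi\phi_u)\gamma)\le\Tr(\phi_u\varphi H_\Omega\varphi\phi_u)_\limminus$ yields
\[
  \Tr(\varphi H_\Omega\varphi)_\limminus=-\Tr(\varphi H_\Omega\varphi\,\gamma)\le\int_{\R^d}\Tr(\phi_u\varphi H_\Omega\varphi\phi_u)_\limminus\,l(u)^{-d}\,du+h^2\Tr(W\gamma),
\]
so everything reduces to bounding $h^2\Tr(W\gamma)$ with $\gamma=\1_{\varphi H_\Omega\varphi<0}$.

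To this end write $\Tr(W\gamma)=\int_\Omega W(x)\rho_\gamma(x)\,dx$, where $\rho_\gamma$ is the diagonal of the integral kernel of $\gamma$, and invoke the pointwise bound $\rho_\gamma(x)\le C h^{-d}$ with $C$ depending only on $\|\varphi\|_{L^\infty(\Omega)}$ and $d$. (Since $\1_{\varphi H_\Omega\varphi<0}\le e^{-t\varphi H_\Omega\varphi}$ and both are positive operators, $\rho_\gamma(x)\le[e^{-t\varphi H_\Omega\varphi}](x,x)$; a Feynman--Kac/heat-kernel estimate for $\varphi H_\Omega\varphi$, optimised in $t$, gives the bound, which is a local Weyl/Berezin--Li--Yau type estimate in the spirit of Lemma~\ref{lem:BerezinLiebLiYau}.) Thus $h^2\Tr(W\gamma)\le Ch^{-d+2}\int_\Omega\varphi^2 G\,dx$. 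Finally, $\nabla\phi_u(x)\ne 0$ forces $x\in\supp\phi_u\subseteq\overline{B_{l(u)}(u)}$, so by Tonelli's theorem, $\varphi^2\le\|\varphi\|_{L^\infty(\Omega)}^2$, the volume bound $|B_{l(u)}(u)|=\omega_d l(u)^d$, and the estimate $\|\nabla\phi_u\|_{L^\infty}\le C l(u)^{-1}$ from \eqref{eq:phi_properties},
\[
  \int_\Omega\varphi^2 G\,dx\le\|\varphi\|_{L^\infty(\Omega)}^2\!\!\int_{\dist(u,\Omega\cap\supp\varphi)\le l(u)}\!\!\omega_d\,l(u)^d\,\|\nabla\phi_u\|_{L^\infty}^2\,l(u)^{-d}\,du\le C\|\varphi\|_{L^\infty(\Omega)}^2\!\!\int_{\dist(u,\Omega\cap\supp\varphi)\le l(u)}\!\!l(u)^{-2}\,du,
\]
which completes the proof.

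The main obstacle is the pointwise density bound $\rho_\gamma\le Ch^{-d}$: the operator $\varphi H_\Omega\varphi$ has principal part $-h^2\operatorname{div}(\varphi^2\nabla\,\cdot\,)$, which degenerates on $\{\varphi=0\}$, so the naive Gaussian on-diagonal heat-kernel bound deteriorates near $\{\varphi=0\}$; one must either exploit that in $\Tr(W\gamma)$ this region carries the weight $W=\varphi^2 G$, which vanishes quadratically there, or argue the uniform $h^{-d}$ bound directly from the fact that bound states of $\varphi H_\Omega\varphi$ still oscillate on scale $h$. (An alternative is to bound $h^2\Tr(W\gamma)$ by combining the identity above---note that $h\le M\min l(u)$ over the relevant range forces $h^2 G\le C$ on $\Omega\cap\supp\varphi$---with Lemma~\ref{lem:BerezinLiebLiYau} together with a lower bound on $\Tr(\varphi H_\Omega\varphi)_\limminus$, but producing such a lower bound with the right error essentially reintroduces the asymptotics one is proving.) A secondary, routine point is the measurability of $u\mapsto\Tr(\phi_u\varphi H_\Omega\varphi\phi_u)_\limminus$ and the well-definedness of the operator-valued integrals ($\phi_u$ exists only for a.e.\ $u$); since the inequality is vacuous when its right-hand side diverges, one may assume all relevant integrals finite and, if needed, argue by approximation with smooth $l$.
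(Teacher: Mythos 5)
Your lower bound and your IMS identity coincide with the paper's argument, but the upper bound has a genuine gap exactly where you flag it: the pointwise density bound $\rho_\gamma(x)\leq Ch^{-d}$ for $\gamma=\1_{\varphi H_\Omega\varphi<0}$ is never proved, and it is not a routine fact. The operator $\varphi H_\Omega\varphi$ is defined by the degenerate form $\int_\Omega\bigl(h^2|\nabla(\varphi u)|^2-|\varphi u|^2\bigr)dx$; it is not a Schr\"odinger operator, Feynman--Kac and Gaussian heat-kernel bounds do not apply to it, and a uniform on-diagonal bound for its negative spectral projection (with a constant depending only on $\|\varphi\|_{L^\infty}$ and $d$, uniformly in $\Omega$, $\varphi$ and $h$) is precisely a pointwise local Weyl-type estimate for a degenerate operator with merely bounded coefficients --- nothing in the paper or in the lemmas you are allowed to use supplies it. Your fallback remarks (that $W=\varphi^2G$ vanishes where $\varphi$ does, or that bound states ``oscillate on scale $h$'') are not arguments, so as written the proof does not close.

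The fix --- and this is what the paper does --- is to never form $\Tr(W\gamma)$ with the global projection at all. Use the sharper, pointwise Solovej--Spitzer estimate $\int_{\R^d}|\nabla\phi_u(x)|^2\,l(u)^{-d}du\leq C\int_{\R^d}\phi_u(x)^2\,l(u)^{-d-2}du$ (rather than your cruder bound via $\|\nabla\phi_u\|_{L^\infty}$ and $|\supp\phi_u|$) to absorb the IMS error into the integrand \emph{before} taking negative parts, obtaining the form inequality
\begin{equation}
  \varphi H_\Omega\varphi \;\geq\; \int_{\R^d}\phi_u\varphi\bigl(H_\Omega-Ch^2l(u)^{-2}\bigr)\varphi\phi_u\,l(u)^{-d}\,du\, .
\end{equation}
The variational principle then pulls the negative part inside the $u$-integral, and for each fixed $u$ one splits off a fraction $\rho_u=h^2l(u)^{-2}/M^2\leq 1$ of the kinetic energy (this is exactly where the hypothesis $h\leq M\min l(u)$ enters) and applies the localized Berezin--Li--Yau inequality, Lemma~\ref{lem:BerezinLiebLiYau}, to the piece $\phi_u\varphi(-\rho_uh^2\Delta_\Omega-\rho_u-Ch^2l(u)^{-2})\varphi\phi_u$, which yields an error of size $Ch^{-d+2}l(u)^{-2}\int_\Omega\phi_u^2\varphi^2\,dx$ per $u$ and hence, after integration, the right side of \eqref{eq:localization_lemma}. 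Contrary to your parenthetical worry, this does not reintroduce the asymptotics: Lemma~\ref{lem:BerezinLiebLiYau} is only an upper bound and no lower bound on $\Tr(\varphi H_\Omega\varphi)_\limminus$ is needed. With this replacement of your $\rho_\gamma$ step, the rest of your argument (the trial density matrix for the lower bound, the vanishing of the IMS cross term via \eqref{eq:phi_properties1}, and the restriction of the $u$-integral to $\dist(u,\Omega\cap\supp\varphi)\leq l(u)$) is correct and matches the paper.
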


For $\varphi\equiv 1$ this is essentially~\cite[Proposition~1.1]{FrankGeisinger_11}. Here we shall need the slightly more general statement above. However, the proof, which is given in Appendix~\ref{appendixA}, is almost identical to that in~\cite{FrankGeisinger_11}.

\begin{remark}
In~\cite{FrankGeisinger_11} the inequality corresponding to~\eqref{eq:localization_lemma} is stated for all $h>0$, however, the proof requires additionally an upper bound on $h/l(u)$. This does not affect the results in~\cite{FrankGeisinger_11} because for an asymptotic result it suffices to apply the statement where this additional assumption is met. 
Nonetheless, in~\cite{FrankGeisinger_11} the inequality is stated for a particular choice of~$l$ for which it can be extended to all $h>0$, if one assumes that a parameter $l_0$ in their construction satisfies $\liminf_{h\to 0^\limplus}l_0/h >0$. This will be proved in Appendix~\ref{appendixA}. 
\end{remark}


With these preparations at hand, we now show how the method of \cite{FrankGeisinger_11} can be used to compute a two-term asymptotic formula for circular cones and their complements.

\begin{lemma}[Precise local asymptotics in cones]
  \label{lem:LocalAsympCone}
  Let $\varphi\in C_0^\infty(\R^d)$ have support in a ball of radius $l>0$ and satisfy
  \begin{equation}\label{eq:grad_bound_lemma3}
    \|\varphi\|_{L^\infty}\leq M\, .
  \end{equation}
Let $0\leq \eps \leq 1/2$ and
  \begin{equation}
    \Lambda_\eps = \{x\in \R^d: x_d< \eps|x|\}\, .
  \end{equation}
  Then, for all\/ $h>0$, 
  \begin{equation}
    \Bigl|\Tr(\varphi H_{\Lambda_\eps}\varphi)_\limminus - L_d  h^{-d}\int_{\Lambda_\eps} \varphi^2(x)\, dx + \frac{L_{d-1}}{4}h^{-d+1}\int_{\partial \Lambda_\eps} \varphi^2(x)\, d\Haus^{d-1}(x)\Bigr| \leq C l^{d-4/3}h^{-d+4/3}\, , 
  \end{equation}
  and
  \begin{equation}
    \Bigl|\Tr(\varphi H_{\Lambda^c_\eps}\varphi)_\limminus - L_d h^{-d}\int_{\Lambda^c_\eps} \varphi^2(x)\, dx + \frac{L_{d-1}}{4}h^{-d+1}\int_{\partial \Lambda^c_\eps} \varphi^2(x)\, d\Haus^{d-1}(x)\Bigr| \leq C l^{d-4/3}h^{-d+4/3}\, , 
  \end{equation}
  where the constant $C$ depends only on $M$ and $d$ and, in particular, not on $\eps$.
\end{lemma}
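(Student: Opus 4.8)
The plan is to reduce the cone problem to the half-space problem (for which the two-term asymptotics is essentially Lemma~\ref{lem:LocalAsympGraph} with $\omega\equiv 0$, or a direct explicit computation) by exploiting the exact self-similarity of $\Lambda_\eps$ under dilations. First I would treat the trivial regime: by the scaling $x\mapsto hx/l$ applied to the operator $H_{\Lambda_\eps}$ (the cone is dilation-invariant, so this only rescales $\varphi$ and replaces $h$ by $l$), and by the Berezin--Li--Yau bound of Lemma~\ref{lem:BerezinLiebLiYau} together with the Weyl volume and surface terms, one checks directly that when $h\gtrsim l$ the left-hand side is bounded by $C\,\max(l^{d-1}h^{-d+1},l^{d-2}h^{-d+2})\le C l^{d-4/3}h^{-d+4/3}$, so it suffices to treat $0<h\le l$, and after the rescaling one may assume $l=1$ and $0<h\le 1$.

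The core of the argument is a further localization using Lemma~\ref{lem:localization} with a length scale $l(u)$ proportional to $\max(\dist(u,\partial\Lambda_\eps),\,h^{2/3})$ (so $\|\nabla l\|_{L^\infty}$ is a fixed fraction below $1$ and $l(u)\gtrsim h^{2/3}$ everywhere). Splitting $\R^d$ into the "bulk" region where $B(u,l(u))$ does not meet $\partial\Lambda_\eps$ and the "boundary strip" where it does, the bulk contribution is handled by Lemma~\ref{lem:AsympBulk}, whose errors sum to $C h^{-d}\int l(u)^{-2}\,du$ over the relevant region. For the boundary strip I would use that $\partial\Lambda_\eps$ is a smooth hypersurface away from the apex, and that at scale $l(u)\sim\dist(u,\partial\Lambda_\eps)$ a piece of the cone is, after a rotation, the graph of a function $f$ with $|\nabla f|$ as small as we like and with second derivatives $O(1/\dist(u,\text{apex}))$; thus Lemma~\ref{lem:LocalAsympGraph} applies with $\omega(\delta)=C\delta/\dist(u,\text{apex})$, giving a local error $C l(u)^d h^{-d}(h^2/l(u)^2+\omega(l(u)))$. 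The only genuinely singular point is the apex: there one uses instead the crude two-sided bound (Berezin--Li--Yau from above, and a trial-state or monotonicity bound from below) on a single ball of radius $\sim h^{2/3}$ centered at the apex, for which the volume and surface Weyl terms are themselves $O(h^{-d}\cdot h^{2d/3})=O(h^{-d+2/3})$ — this is exactly where the exponent $4/3$, rather than the clean $d-1$, is forced. Adding the localization error from Lemma~\ref{lem:localization}, the bulk errors, the graph errors over the strip, and the apex ball, one gets a total of order $h^{-d}\bigl(\int_{\{\dist(u,\partial\Lambda_\eps)\gtrsim h^{2/3}\}} \dist(u,\partial)^{-2}\,du\cdot h^2 + (\text{apex ball}) + (\text{Lipschitz-correction terms})\bigr)$; the first integral, over the unit ball, diverges only logarithmically if cut at $h^{2/3}$ but is in fact better because the $\omega$-term $\int l(u)^d\cdot\dist(u,\text{apex})^{-1}\,du$ and the $h^2/l^2$ term balance at the chosen scale, yielding $O(h^{-d+4/3})$. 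The complement $\Lambda_\eps^c$ is handled identically, noting $\partial\Lambda_\eps^c=\partial\Lambda_\eps$ and that $\Lambda_\eps^c$ is again a (reentrant) dilation-invariant cone; crucially all constants are uniform in $\eps\in[0,1/2]$ because the half-opening angle stays bounded away from degeneracy, so the graph representations have $|\nabla f|\le$ a universal constant and curvature bounds independent of $\eps$.

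The key point to get the $\eps$-uniformity right is that one must never let the implied constants depend on how close the cone is to a half-space ($\eps\to 0$) or to a flat situation; this is automatic since for $\eps\in[0,1/2]$ the cone $\Lambda_\eps$ near any boundary point other than the apex looks, at the scale $\dist(u,\text{apex})$, like a fixed smooth reference surface up to a uniformly bounded rescaling, and at the apex we use only volume/surface bounds which depend on $\eps$ only through $|\Lambda_\eps\cap B|$ and $\Haus^{d-1}(\partial\Lambda_\eps\cap B)$, both bounded uniformly. The main obstacle I anticipate is the bookkeeping of the apex region: one needs a clean lower bound there (the upper bound is Berezin--Li--Yau), which can be obtained either by Dirichlet--Neumann bracketing against a ball, or simply by using that $\Tr(\varphi H_{\Lambda_\eps}\varphi)_\limminus\ge 0$ and absorbing the (signed, possibly negative) Weyl surface term — since the surface term over a ball of radius $h^{2/3}$ is $O(h^{-d+2/3})$ this crude bound already suffices — and then to verify that choosing the transition scale $h^{2/3}$ is what optimizes the sum of the $h^2/l^2$ bulk error and the apex/Lipschitz errors to the stated $h^{-d+4/3}$.
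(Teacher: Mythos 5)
Your plan is essentially the paper's own proof: reduce to $h\le l$ by Berezin--Li--Yau and rescale to $l=1$, localize via Lemma~\ref{solovejspitzer}/Lemma~\ref{lem:localization} at scale $\sim\max(\dist(u,\partial\Lambda_\eps),h^{2/3})$, treat the bulk with Lemma~\ref{lem:AsympBulk}, treat the boundary strip away from the apex with Lemma~\ref{lem:LocalAsympGraph} and the modulus $\omega(\delta)\sim C_{d,\eps}\,\delta/|u|$ (uniform in $\eps\le 1/2$), handle a ball of radius $\sim h^{2/3}$ around the apex by crude Berezin--Li--Yau/positivity, and balance the errors at $l_0=h^{2/3}$ to get $h^{-d+4/3}$. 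Only two harmless slips: the apex Weyl terms are $O(h^{-d+2d/3})\le O(h^{-d+4/3})$ for $d\ge2$, not $O(h^{-d+2/3})$ (and for $d\ge3$ the exponent $4/3$ is actually forced by balancing $h^2/l_0$ against $C_{d,\eps}l_0^2$ rather than by the apex ball), and in the regime $h\gtrsim l$ the relevant terms are $l^dh^{-d}$ and $l^{d-1}h^{-d+1}$, so the spurious $l^{d-2}h^{-d+2}$ in your maximum should be dropped.
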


The error $(l/h)^{d-4/3}$ is probably not sharp, but good enough for our purposes. After the proof we will explain that for $d=2$, our proof actually yields the error $(l/h)^\gamma$ for any $\gamma>0$.

\begin{proof}[Proof of Lemma~\ref{lem:LocalAsympCone}]
  We only prove the first claim of the lemma, the second one follows analogously.
  The idea is to apply the arguments from~\cite{FrankGeisinger_11, FrankGeisinger_12} to the operator $\varphi H_{\Lambda_\eps}\varphi$ instead of $H_{\Lambda_\eps}$.

  Before we continue with the main part of the proof we show that the claimed inequality holds for $h\geq l$. 

  For all $h>0$, Lemma~\ref{lem:BerezinLiebLiYau} implies that
  \begin{align}
    \Bigl|\Tr(\varphi H_{\Lambda_\eps}&\varphi)_\limminus - L_d  h^{-d}\int_{\Lambda_\eps} \varphi^2(x)\, dx + \frac{L_{d-1}}{4}h^{-d+1}\int_{\partial \Lambda_\eps} \varphi^2(x)\, d\Haus^{d-1}(x)\Bigr|\\
    &\leq 2L_d  h^{-d}\int_{\Lambda_\eps} \varphi^2(x)\, dx + \frac{L_{d-1}}{4}h^{-d+1}\int_{\partial \Lambda_\eps} \varphi^2(x)\, d\Haus^{d-1}(x)\\
    &\leq
     C (l^d h^{-d}+l^{d-1}h^{-d+1})\, .
  \end{align}
  Here we used~\eqref{eq:grad_bound_lemma3}, $|\Lambda_\eps \cap B_l|\leq C l^d$, and $\Haus^{d-1}(\partial \Lambda_\eps \cap B_l)\leq Cl^{d-1}$. The last inequality follows by noting that $\Lambda_\eps^c\cap B_l$ is convex and the monotonicity of the measure of the perimeter of convex sets under inclusion.

  Consequently the inequality claimed in the lemma holds for all $h\geq l$. Through the remainder of the proof we assume that $0<h<l$.

  Since $\Lambda_\eps$ is scale invariant, we may and will assume that $l=1$.

  {\bf Step 1:} We derive a local $C^1$ modulus of continuity for $\partial \Lambda_\eps$. We claim that for any $|u|\geq 4 r$ and $B_r(u)\cap \partial \Lambda_\eps \neq \emptyset$ we can choose a system of coordinates $(x', x_d)\in \R^{d-1}\times \R$ such that $\partial \Lambda_\eps\cap B_r(u)$ can be parametrized as the graph $x_d=f(x')$ of a function $f$ such that for some point in $\partial\Lambda_\eps\cap B_r(u)$ with coordinates $(y', y_d)$ and $\nabla f(y')=0$ one has
    \begin{equation}\label{eq:local_C1modulus}
      |\nabla f(x')|\leq C_{d, \eps}\frac{|x'-y'|}{|u|} \, , 
    \end{equation}
  where $C_{d, \eps}$ is uniformly bounded for $0\leq \eps \leq 1/2$. (In fact, the constant here satisfies $C_{d, \eps} = o_{\eps \to 0^\limplus}(1)$, but this will not be relevant for us. In $d=2$ the boundary of $\Lambda_\eps$ consists of two rays and hence $C_{2, \eps} = 0$.)

  Let us prove \eqref{eq:local_C1modulus}. Pick $x_0\in B_{r}(u)\cap \partial\Lambda_\eps$. Then $B_{r}(u)\cap \partial\Lambda_\eps \subset B_{2r}(x_0)\cap \partial\Lambda_\eps$ and $0 \notin B_{2r}(x_0)$. After rescaling and rotating so that $x_0=(1, 0, \ldots, 0)$ and $\Lambda_\eps \subset \{x\in  \R^d: x_d\leq 0\}$ the above inclusions imply that it is sufficient to consider parametrizing $\partial\Lambda_\eps$ as $x_d=f_0(x')$ in the ball $B_{2/3}(x_0)$. Clearly this is possible and $f_0$ is $C^{1, 1}$-regular and thus, by the choice of coordinates, satisfies the estimate
    \begin{equation}
      |\nabla f_0(x')| \leq C_{d, \eps} |x'-x_0'|\, , \quad x_0' = (1, 0, \ldots, 0)\in \R^{d-1}\, , 
    \end{equation}
    where $C_{d, \eps}$ is uniformly bounded for $0\leq \eps \leq 1/2$ and tends to zero as $\eps\to 0^\limplus$. After scaling and translating one obtains~\eqref{eq:local_C1modulus} since by assumption $|x_0|\geq \tfrac 3 4|u|$.

   {\bf Step 2:} We localize the problem. Fix a function $\phi\in C_0^\infty(\R^d)$ with $\supp\phi =\overline{B_1(0)}$ and $\|\phi\|_{L^2}=1$. With a parameter $l_0\in(0, 1]$ depending on $h$ to be determined, set
    \begin{equation}
      l(u)=\frac{1}{2} \min\bigl\{ 2, \max\{ \dist(u, \Lambda_\eps^c), 2l_0\} \bigr\}\, .
    \end{equation}
  Note that $0<l\leq 1$ and, by \eqref{eq:Eikonal}, $\|\nabla l\|_{L^\infty}\leq 1/2$, so Lemma \ref{solovejspitzer} is applicable. Denote by $\phi_u$ the resulting family of functions from that lemma. Assume also that $h\leq l_0$ so that $h\leq l(u)$ for all $u\in \R^d$.

  By Lemma~\ref{lem:localization}, with $M=1$, and a straightforward estimate of the integral remainder we have that
  \begin{equation}\label{eq:localization2}
    \Bigl|\Tr(\varphi H_{\Lambda_\eps}\varphi)_\limminus - 
    \int_{\R^d}\Tr(\phi_u\varphi H_{\Lambda_\eps}\varphi \phi_u)_\limminus l(u)^{-d}\, du\Bigr| \leq C\|\varphi\|^2_{L^\infty} l_0^{-1}h^{-d+2}\, .
  \end{equation}

  {\bf Step 3:} We split
  \begin{equation}\label{eq:integral_split_cone}
  \begin{aligned}
    \int_{\R^d}\Tr(\phi_u\varphi H_{\Lambda_\eps}\varphi \phi_u)_\limminus l(u)^{-d}\, du 
    &=
    \int_{\Lambda^{(1)}}\Tr(\phi_u\varphi H_{\Lambda_\eps}\varphi \phi_u)_\limminus l(u)^{-d}\, du\\
    &\quad +
    \int_{\Lambda^{(2)}}\Tr(\phi_u\varphi H_{\Lambda_\eps}\varphi \phi_u)_\limminus l(u)^{-d}\, du\, , 
  \end{aligned}
  \end{equation}
  where 
  \begin{align}
    \Lambda^{(1)} &= \{u\in \R^d:\ \emptyset\neq\supp\phi_u\varphi \subset\Lambda_\eps \} \, , \\
    \Lambda^{(2)} &= \{u\in \R^d:\ \supp\phi_u\varphi\cap \partial\Lambda_\eps \neq\emptyset \} \, , 
  \end{align}
  and where we used the fact that $\Tr(\phi_u\varphi H_{\Lambda_\eps}\varphi\phi_u)_\limminus=0$ when $\supp\phi_u\varphi\cap\Lambda_\eps=\emptyset$. Since $\supp \varphi$ is contained in a ball of radius $1$ and $\supp \phi_u$ is contained in a ball of radius $l(u)\leq 1$ the set $\Lambda^{(1)}\cup\Lambda^{(2)}$ is contained in a ball of radius $2$. Moreover, it is easy to see that for all $u\in\Lambda^{(2)}$ one has $l(u)\geq\dist(u, \partial\Lambda_\eps)$ and therefore $\dist(u, \partial\Lambda_\eps)\leq l_0$ and $l(u) = l_0$.
  
  Applying Lemma~\ref{lem:AsympBulk} to the first integral in~\eqref{eq:integral_split_cone} and using~\cite[Equation~8]{FrankGeisinger_11} (see also~\eqref{eq:errorint_bulk} below) yields
  \begin{align}
    \int_{\Lambda^{(1)}}\Tr(\phi_u\varphi H_{\Lambda_\eps}\varphi \phi_u)_\limminus l(u)^{-d}\, du
    &=
    L_d h^{-d} \int_{\Lambda^{(1)}}\int_{\Lambda_\eps}\phi_u^2(x)\varphi^2(x) l(u)^{-d}\, dx\, du \\ \label{eq:cone_bulk}
    &\quad +
    O(h^{-d+2})\int_{\Lambda^{(1)}}l(u)^{-2}\, du\\
    &=
    L_d h^{-d} \int_{\Lambda^{(1)}}\int_{\Lambda_\eps}\phi_u^2(x)\varphi^2(x) l(u)^{-d}\, dx\, du + l_0^{-1}O(h^{-d+2}) \, .
  \end{align}

  With a parameter $\delta>0$ to be specified, we split the second integral of~\eqref{eq:integral_split_cone} further, depending on the distance of $u$ from the vertex of~$\Lambda_\eps$, 
  \begin{equation}\label{eq:cone_boundary_int} 
  \begin{aligned}
    \int_{\Lambda^{(2)}}\Tr(\phi_u\varphi H_{\Lambda_\eps}\varphi \phi_u)_\limminus l(u)^{-d}\, du
    &=
    \int_{\Lambda^{(2)} \setminus B_\delta}\Tr(\phi_u\varphi H_{\Lambda_\eps}\varphi \phi_u)_\limminus l(u)^{-d}\, du\\
    &\quad +
    \int_{\Lambda^{(2)} \cap\pps B_\delta}\Tr(\phi_u\varphi H_{\Lambda_\eps}\varphi \phi_u)_\limminus l(u)^{-d}\, du \, .
  \end{aligned}
  \end{equation}
 By Lemma~\ref{lem:BerezinLiebLiYau} the second integral is small, that is, 
 \begin{equation}\label{eq:BLLY_close_to_vertex}
  \begin{aligned}
    \int_{\Lambda^{(2)} \cap\pps B_\delta}\Tr(\phi_u\varphi H_{\Lambda_\eps}\varphi \phi_u)_\limminus l(u)^{-d}\, du 
    &\leq 
    L_d h^{-d}\int_{\Lambda^{(2)} \cap\pps B_\delta}\int_{\Lambda_\eps}\phi_u^2(x)\varphi^2(x) l(u)^{-d}\, dx\, du\\
    &\leq 
    C h^{-d} |\Lambda^{(2)}\cap B_\delta| \leq C h^{-d}\delta^{d-1}l_0 \, .
  \end{aligned}
  \end{equation}
  In the last inequality we used the fact that $\Lambda^{(2)}$ is contained in an $l_0$-neighborhood of~$\partial \Lambda_\eps$. For later purposes we also record that
  \begin{equation}\label{eq:vertex_integrals}
    \int_{\Lambda^{(2)} \cap B_\delta}\!\biggl(
      \int_{\Lambda_\eps} \phi_u^2(x)\varphi^2(x)\pps dx
      + h \int_{\partial \Lambda_\eps} \phi_u^2(x)\varphi^2(x)\pps d\Haus^{d-1}(x)
    \!\biggr)l(u)^{-d}\pps du 
    \leq
    C \delta^{d-1}(l_0 + h)\, , 
  \end{equation}
  where we used again $|\Lambda^{(2)}\cap B_\delta|\leq C l_0 \delta^{d-1}$.

  To treat the remaining term of~\eqref{eq:cone_boundary_int} we apply Lemma~\ref{lem:LocalAsympGraph}. Let $C_{d, \eps}$ and $C_d$ be the constants from Step~1 and Lemma~\ref{lem:LocalAsympGraph}, respectively, and let $\omega(r)=C_{d, \eps} r/|u|$. Finally, set $A=\max\{C_{d, \eps}/C_d, 4\}$.
    
  We claim that, if $\delta\geq A l_0$, then $\omega(l(u))\leq C_d$ and for all $u\in\Lambda^{(2)}\setminus B_\delta$ one can parametrize $\partial\Lambda_\eps\cap B_{l(u)}(u)$ as the graph of a function $f$ and for a point $(y', y_d)\in \partial\Lambda_\eps\cap B_{l(u)}(u)$ one has $\nabla f(y')=0$ and $|\nabla f(x')|\leq \omega(|x'-y'|)$ for all $x'\in \R^{d-1}$.
  
  Indeed, for any $u\in\Lambda^{(2)}\setminus B_\delta$ one has $|u|\geq\delta\geq A l_0 = A l(u)$. Therefore, since $A\geq 4$, according to Step~1 such a parametrization is possible with the above choice of~$\omega$. In particular, $\omega(l(u)) = C_{d, \eps} l(u)/|u| \leq C_{d, \eps}/A$. Since $A\geq C_{d, \eps}/C_d$, the claimed inequality holds.

Since $l_0 \geq h$, we for all $u\in\Lambda^{(2)}$ have $l(u)=l_0 \geq h$ and therefore Lemma~\ref{lem:LocalAsympGraph} yields
\begin{equation}\label{eq:cone_bdry_far}
  \begin{aligned}
    & \int_{\Lambda^{(2)} \setminus B_\delta}\Tr(\phi_u\varphi H_{\Lambda_\eps}\varphi \phi_u)_\limminus l(u)^{-d}\, du \\
&\quad  =
    L_d h^{-d}\int_{\Lambda^{(2)} \setminus B_\delta}\int_{\Lambda_\eps}\phi_u^2(x)\varphi^2(x) l(u)^{-d}\, dx\, du \\
    & \qquad 
    - \frac{L_{d-1}}{4}h^{-d+1}\int_{\Lambda^{(2)} \setminus B_\delta}\int_{\partial\Lambda_\eps}\phi_u^2(x)\varphi^2(x)l(u)^{-d}\, d\Haus^{d-1}(x)\, du\\
    & \qquad
    + O(h^{-d})\int_{\Lambda^{(2)}\setminus B_\delta}
    \biggl(
    \frac{h^2}{l(u)^{2}} + C_{d, \eps} \frac{l(u)}{|u|}\biggr)du \, .
  \end{aligned}
  \end{equation}

Combining \eqref{eq:localization2}, \eqref{eq:integral_split_cone}, \eqref{eq:cone_bulk}, \eqref{eq:cone_boundary_int}, \eqref{eq:BLLY_close_to_vertex}, \eqref{eq:vertex_integrals}, \eqref{eq:cone_bdry_far} and using \eqref{eq:phi_properties1} we obtain
$$
\Tr(\varphi H_{\Lambda_\eps}\varphi)_\limminus = L_d  h^{-d}\int_{\Lambda_\eps} \varphi^2(x)\, dx - \frac{L_{d-1}}{4}h^{-d+1}\int_{\partial \Lambda_\eps} \varphi^2(x)\, d\Haus^{d-1}(x) + \mathcal R
$$
with
  \begin{align}\label{eq:error_term_cone1}
  |\mathcal R| \leq C h^{-d} \biggl( l_0^{-1}h^{2} + \delta^{d-1}(l_0+h)
    + \int_{\Lambda^{(2)}\setminus B_\delta}
    \biggl(
      \frac{h^2}{l(u)^{2}}+ C_{d, \eps} \frac{l(u)}{|u|}
    \biggr)du \biggr)\, .
  \end{align}

Our final task in the proof is to choose $l_0$ and $\delta$ such that the right side here becomes $\leq C h^{-d+4/3}$. By~\cite[Equation~8]{FrankGeisinger_11}, see also~\eqref{eq:errorint_boundary}, 
  \begin{equation}
    h^2 \int_{\Lambda^{(2)}\setminus B_\delta}l(u)^{-2}\, du \leq C l_0^{-1}h^2\, .
  \end{equation}
  To bound the remaining term of the integral we consider two cases:
  \begin{enumerate}[label=\roman*.]
    \item If $\Lambda^{(2)} \cap B_1=\emptyset$, then
      \begin{equation}
        C_{d, \eps}\int_{\Lambda^{(2)}\setminus B_\delta} \frac{l(u)}{|u|}\, du
        \leq C_{d, \eps} \int_{\Lambda^{(2)}\setminus B_\delta} l(u)\, du
        \leq
          C\pps C_{d, \eps} l_0^2 \, .
      \end{equation}

    \item If $\Lambda^{(2)} \cap B_1\neq\emptyset$, then $\Lambda^{(2)} \subset B_5$ and
      \begin{equation}
        C_{d, \eps} \int_{\Lambda^{(2)}\setminus B_\delta}
         \frac{l(u)}{|u|}\, du
        \leq
          C\pps C_{d, \eps} l_0^2 \int_\delta^5 \tau^{-1} \tau^{d-2}\, d\tau
        \leq
        Cl_0^2\times
        \begin{cases}
          0 & \mbox{if } d=2\, , \\
          C_{3, \eps}(1+h \log(\delta^{-1})) & \mbox{if }d=3\, , \\
          C_{d, \eps} &\mbox{if }d\geq 4\, .
        \end{cases}
      \end{equation}
  \end{enumerate}
  In both cases we used the fact that $\Lambda^{(2)}$ is contained in an $l_0$-neighborhood of~$\partial\Lambda_\eps$.

  In conclusion, the right side of~\eqref{eq:error_term_cone1} is bounded by
  \begin{equation}\label{eq:cone_final_errors}
 C h^{-d} \Bigl(  l_0^{-1}h^2+\delta^{d-1}(l_0+h)+ C_{d, \eps}l_0^2(1+h\log(\delta^{-1}))\Bigr)\, , 
  \end{equation}
  where the $\log$ term appears only in $d=3$. Setting $\delta=A l_0$ and $l_0=h^{2/3}$, we obtain the claimed error bound. Note that $1\geq l_0 \geq h$ for $0<h\leq 1$, as required.
\end{proof}

\begin{remark}
  In the two-dimensional case the above argument can be iterated to obtain Lemma~\ref{lem:LocalAsympCone} with an error term of order $l^{\gamma}h^{-\gamma}$ for any $\gamma>0$. Indeed, if one has Lemma~\ref{lem:LocalAsympCone} with error term~$l^{\gamma_0}h^{-\gamma_0}$ for some $\gamma_0 \in (0, 2]$, then one can replace the application of Lemma~\ref{lem:BerezinLiebLiYau} in~\eqref{eq:BLLY_close_to_vertex} by an application of this asymptotic expansion and one can avoid~\eqref{eq:vertex_integrals}. Therefore~\eqref{eq:cone_final_errors} is replaced by $h^{-2} (l_0^{-1}h^2+\delta h^{2-\gamma_0}l_0^{-1+\gamma_0})$. Choosing again $\delta=Al_0$ but now $l_0 = h^{\gamma_0/(1+\gamma_0)}$ yields a two-term expansion with error of order~$l^{\gamma'}h^{-\gamma'}$ with $\gamma'= \tfrac{\gamma_0}{1+\gamma_0}$. Repeating this procedure the exponent $\gamma$ can be made arbitrarily small. In higher dimensions the corresponding idea does not yield an improvement since the term $l_0^{-1}h^2+C_{d, \eps}l_0^2$ in~\eqref{eq:cone_final_errors} can be made no smaller than~$h^{4/3}$.
\end{remark}


\section{Geometric constructions}
\label{sec:Geometric_constructions}

In this section we adapt the geometric ideas used by Brown in~\cite{Brown_93} (see also~\cite{BanuelosEtAl_09}) to the setting considered here.

\begin{definition}
  Let $0<\eps\leq 1$ and $r>0$. A point $p\subset \partial \Omega$ is called $(\eps, r)$-good if the inner unit normal $\nu(p)$ exists and
  \begin{equation}
    B_r(p)\cap \partial\Omega \subset \{x\in \R^d: |(x-p)\cdot \nu(p)|< \eps |x-p|\}\, .
  \end{equation}
  The set of all $(\eps, r)$-good points of $\partial\Omega$ is denoted by $G_{\eps, r}$.
\end{definition}

In other words, $p$ is $(\eps, r)$-good if locally $\partial \Omega$ is contained in the complement of the two-sided circular cone with vertex $p$, symmetry axis $\nu(p)$, and opening angle $\arcsin(\sqrt{1-\eps^2})=\arccos(\eps)$ measured from the axis of symmetry.

Following~\cite{Brown_93, BanuelosEtAl_09} we define a good subset of points near the boundary. In contrast to the constructions in~\cite{Brown_93, BanuelosEtAl_09} this set will contain points both in $\Omega$ and in its complement $\Omega^c$.
\begin{definition}
  Let
  \begin{equation}
    \Gamma_{\eps, r}(\pps p)=\{x\in \R^d : |(x-p)\cdot \nu(p)|> \sqrt{1-\eps^2}|x-p|\}\cap B_{r/2}(p)
  \end{equation}
  and
  \begin{equation}
    \G_{\eps, r} = \bigcup_{p\in G_{\eps, r}}\, \Gamma_{\eps, r}(\pps p) \, .
  \end{equation}
\end{definition}

We emphasize that $\Gamma_{\eps, r}(p)$ differs from the corresponding set defined in~\cite{Brown_93, BanuelosEtAl_09} in several ways. Here we avoid an additional degree of freedom by taking the union over \emph{all} $(\eps, r)$-good points instead of a subset of them, we consider two-sided cones instead of one-sided, and we also choose to truncate the cone at distance $r/2$ instead of $r$.

The two-sided cones appear since we, in contrast to~\cite{Brown_93, BanuelosEtAl_09}, do not work at a pointwise level but at the local length scale given by $l$. In particular, we have a non-trivial contribution to the trace from localizations centered at points $u \notin \Omega$ (see Lemma~\ref{lem:localization}).

The reason for considering smaller cones is to ensure that if $u\in \G_{\eps, r}$ then $\partial\Omega\cap B_{r'}(u)$ stays close to the hyperplane tangent to $\partial\Omega$ at $p$ as long as $r'\leq r/2$. In particular, we shall make use of the following lemma. 

\begin{lemma}\label{lem:vertex_is_close}
  Let $p\in \partial\Omega$ be $(\eps, r)$-good with $0<\eps\leq 1/2$. 
Then for any $u\in \Gamma_{\eps, r}(p)$, 
$$
|u-p| \leq 2 \dist(u, \partial\Omega) \, .
$$
\end{lemma}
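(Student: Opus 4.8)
The plan is to show that any point $u$ in the truncated two-sided cone $\Gamma_{\eps,r}(p)$ cannot be too close to $\partial\Omega$ relative to its distance from the vertex $p$; the key is to use the $(\eps,r)$-goodness of $p$ to confine all of $\partial\Omega\cap B_r(p)$ to a thin double-cone region and then separate $u$ from that region. First I would set up coordinates with $p$ at the origin and $\nu(p)=e_d$, so that $(\eps,r)$-goodness reads $B_r(0)\cap\partial\Omega\subset\{|x_d|<\eps|x|\}$, and membership $u\in\Gamma_{\eps,r}(p)$ reads $|u_d|>\sqrt{1-\eps^2}\,|u|$ together with $|u|<r/2$.

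Next I would estimate $\dist(u,\partial\Omega)$ from below. Let $q\in\partial\Omega$ be a nearest point to $u$; I claim $q$ lies in $B_r(0)$, since $|q-u|=\dist(u,\partial\Omega)\le|u-p|=|u|<r/2$ (using that $p\in\partial\Omega$), hence $|q|\le|q-u|+|u|<r$. Therefore $q$ satisfies $|q_d|<\eps|q|$. The geometric content is then that a point $u$ deep inside the cone $\{|x_d|>\sqrt{1-\eps^2}|x|\}$ is far from the complementary cone $\{|x_d|\le\eps|x|\}$ that contains $q$. Concretely, I would bound $|u-q|$ below by the distance from $u$ to the cone $\{|x_d|\le\eps|x|\}$. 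The cleanest way: write $|u-q|^2=|u'-q'|^2+(u_d-q_d)^2$ where primes denote the first $d-1$ coordinates; since $|u_d|>\sqrt{1-\eps^2}|u|$ we have $|u'|<\eps|u|$, and since $|q_d|<\eps|q|\le\eps(|q-u|+|u|)$, one can chase the inequalities to get $\dist(u,\partial\Omega)=|u-q|\ge |u_d|-|q_d|-(\text{small terms})\ge c_\eps|u|$ for an explicit $c_\eps$, and with $\eps\le 1/2$ check that $c_\eps\ge 1/2$, giving $|u-p|=|u|\le 2\dist(u,\partial\Omega)$.

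The main obstacle — really the only delicate point — is getting the constant right: the naive bound $|u-q|\ge |u_d|-|q_d|$ must be combined with $|u_d|\ge\sqrt{1-\eps^2}|u|$ and the fact that $|q|$ itself is controlled by $|u|$ and $|u-q|$, so there is a mild feedback loop to resolve (one solves a linear inequality for $|u-q|/|u|$). A convenient alternative that avoids the feedback is to compare two cones with a common vertex: the distance from $u$ (in the inner cone) to the outer cone $\{|x_d|\le\eps|x|\}$ equals $|u|\sin(\theta_u-\theta_{\mathrm{cone}})$ where $\theta_u\ge\arccos(\eps)$ is the angle of $u$ from the axis... wait, $u$ is \emph{near} the axis, so $\theta_u=\arcsin(|u'|/|u|)\le\arcsin(\eps)$ measured \emph{from the axis}, while the outer cone has half-opening $\arccos(\eps)$ from the axis, so the angular gap is $\arccos(\eps)-\arcsin(\eps)$ and $\dist(u,\{|x_d|\le\eps|x|\})=|u|\sin(\arccos\eps-\arcsin\eps)$. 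Since $q$ lies in $B_r(0)$ hence in the outer cone, $\dist(u,\partial\Omega)\ge |u|\sin(\arccos\eps-\arcsin\eps)$, and for $0<\eps\le 1/2$ one checks $\sin(\arccos\eps-\arcsin\eps)=\cos^2(\arcsin\eps)-\sin^2(\arcsin\eps)... = 1-2\eps^2... $, hmm let me just assert: a direct computation gives $\sin(\arccos\eps-\arcsin\eps)=1-2\eps^2\ge 1/2$ for $\eps\le 1/2$, which yields exactly $|u-p|\le 2\dist(u,\partial\Omega)$ and completes the proof.
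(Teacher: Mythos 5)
Your proposal is correct and follows essentially the same route as the paper: take the nearest boundary point $q$, note $|u-q|\leq|u-p|<r/2$ forces $q\in B_r(p)$ and hence into the thin cone $\{|(x-p)\cdot\nu(p)|<\eps|x-p|\}$, and then bound $|u-q|$ below by the distance from $u$ to that cone, which elementary trigonometry gives as at least $(1-2\eps^2)|u-p|\geq\tfrac12|u-p|$ for $\eps\leq 1/2$ (the paper phrases this via the right triangle $u,p,y$ with the angle $\geq\pi/2-2\arcsin\eps$, you via coordinates and the untruncated cone, which is an immaterial difference). Your trigonometric identity $\sin(\arccos\eps-\arcsin\eps)=1-2\eps^2$ is indeed correct, so the hesitation at the end resolves in your favor.
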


\begin{proof}[Proof of Lemma~\ref{lem:vertex_is_close}]
Let $p'\in \partial\Omega$ satisfy $|u-p'|=\dist(u, \partial\Omega)$. Then, since $p\in\partial\Omega$, 
$$
|u-p'|=\dist(u, \partial\Omega)\leq |u-p|<r/2
$$
and so, in particular, $p'\in B_r(p)$. Let $\Lambda=\{y: |(y-p)\cdot \nu(p)|<\eps |y-p|\}$. Then, since $p$ is $(\eps, r)$-good, $p'\in \partial\Omega \cap B_r(p)$ implies that $p'\in\Lambda$. Let $y\in\overline\Lambda\cap \overline{B_r(p)}$ satisfy $|u-y|=\dist(u, \Lambda\cap B_r(p))$. Then, since $p'\in\Lambda\cap B_r(p)$, $|u-y|\leq |u-p'|$. By the choice of $y$ and the construction of $\Gamma_{\eps, r}(p)$ the points $u, p, y$ form a right-angle triangle with the angle between the sides $u-p$ and $y-p$ larger than $\pi/2-2\arcsin(\eps)$. By elementary trigonometry it follows that, for $\eps\in (0, 1/2]$, 
  \begin{equation}
    |u-p'| \geq |u-y| \geq \sin(\pi/2-2\arcsin(\eps))|u- p|=(1-2\eps^2)|u- p| \geq \frac{1}{2}|u-p|\, .
  \end{equation}
This completes the proof.
\end{proof}

The proof of the following result, which is omitted, is based on Rademacher's theorem on almost everywhere differentiability of Lipschitz functions.

\begin{lemma}[{\cite[Section~4]{Brown_93}}]\label{lem:ExistenceGoodSet}
  For any $\eps>0$, 
  \begin{equation}
    \lim_{r\to 0^\limplus}\Haus^{d-1}(\partial\Omega \setminus G_{\eps, r})=0\, .
  \end{equation}
\end{lemma}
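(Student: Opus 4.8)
The plan is to reduce the statement to the local graph representation of $\partial\Omega$, to establish pointwise that \emph{almost every} boundary point is $(\eps,r)$-good for all sufficiently small $r$, and then to upgrade this to a statement about $\Haus^{d-1}$-measure by means of Egorov's theorem.

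First I would cover the compact set $\partial\Omega$ by finitely many relatively open pieces $V_1,\dots,V_m$ such that, after a rotation of coordinates depending on $i$, $V_i=\{(x',f_i(x')):x'\in U_i'\}$ is the graph of a Lipschitz function $f_i\colon U_i'\to\R$ with $U_i'\subset\R^{d-1}$ bounded open and Lipschitz constant $L_i$. A Lebesgue-number argument for this finite cover produces $\rho>0$ such that for every $p\in\partial\Omega$ the set $B_\rho(p)\cap\partial\Omega$ is contained in some $V_i$; this reduces, for $r\le\rho$, the verification of the $(\eps,r)$-good condition to a computation inside a single graph.

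Second comes the pointwise step, where Rademacher's theorem enters. Fix $i$. By Rademacher's theorem $f_i$ is differentiable at Lebesgue-almost every $x_0'\in U_i'$, and by the area formula the associated points $p=(x_0',f_i(x_0'))$ exhaust $\Haus^{d-1}$-almost every point of $V_i$; at such a point the normal $\nu(p)$ exists and is parallel to $(-\nabla f_i(x_0'),1)$. For $x=(x',f_i(x'))\in\partial\Omega$ one has
\[
  (x-p)\cdot\nu(p)=\pm\frac{f_i(x')-f_i(x_0')-\nabla f_i(x_0')\cdot(x'-x_0')}{\sqrt{1+|\nabla f_i(x_0')|^2}}\,,
\]
hence, since the denominator is at least $1$, $|(x-p)\cdot\nu(p)|\le|f_i(x')-f_i(x_0')-\nabla f_i(x_0')\cdot(x'-x_0')|=o(|x'-x_0'|)$ as $x'\to x_0'$ by differentiability of $f_i$ at $x_0'$, while $|x'-x_0'|\le|x-p|\le\sqrt{1+L_i^2}\,|x'-x_0'|$. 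Since $|x-p|<r$ forces $|x'-x_0'|<r$, it follows that $p$ is $(\eps,r)$-good once $r$ is smaller than a threshold $r(p)>0$. In particular the set $\partial\Omega\setminus\bigcup_{r>0}G_{\eps,r}$ is $\Haus^{d-1}$-null.

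The remaining, and main, obstacle is to make the threshold uniform off a set of small measure, since a merely point-dependent threshold does not control $\Haus^{d-1}(\partial\Omega\setminus G_{\eps,r})$ for a \emph{fixed} $r$. For this I would introduce, for $x'\in U_i'$ where $f_i$ is differentiable and for $\delta>0$, the modulus of differentiability
\[
  \omega_i(x',\delta)=\sup\Bigl\{\tfrac{|f_i(y')-f_i(x')-\nabla f_i(x')\cdot(y'-x')|}{|y'-x'|}\ :\ y'\in U_i',\ 0<|y'-x'|\le\delta\Bigr\}\,,
\]
which is measurable in $x'$, bounded by $2L_i$, non-decreasing in $\delta$, and tends to $0$ as $\delta\to0^+$ for almost every $x'$. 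Applying Egorov's theorem to the decreasing sequence $\omega_i(\,\cdot\,,1/n)$ on the finite-measure set $U_i'$ yields, for each $\eta>0$, a set $E_i\subset U_i'$ with $|U_i'\setminus E_i|<\eta$ and a number $\delta_i>0$ such that $\omega_i(x',\delta_i)<\eps$ for all $x'\in E_i$. By the computation of the previous paragraph, every $p=(x',f_i(x'))$ with $x'\in E_i$ is then $(\eps,r)$-good for all $r\le r_\eta:=\min\{\rho,\delta_1,\dots,\delta_m\}$. Since the $V_i$ cover $\partial\Omega$ and the map $x'\mapsto(x',f_i(x'))$ distorts measure by the bounded factor $\sqrt{1+|\nabla f_i|^2}$, this gives $\Haus^{d-1}(\partial\Omega\setminus G_{\eps,r})\le\eta\sum_{i=1}^m\sqrt{1+L_i^2}$ for every $r\le r_\eta$. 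As $\eta>0$ is arbitrary and the left side is non-negative, $\lim_{r\to0^+}\Haus^{d-1}(\partial\Omega\setminus G_{\eps,r})=0$. Alternatively, once one checks that $G_{\eps,r}$ is $\Haus^{d-1}$-measurable (which follows from the same considerations), one may simply observe that $G_{\eps,r}$ is non-decreasing as $r\to0^+$, so that $\partial\Omega\setminus G_{\eps,r}$ decreases to the $\Haus^{d-1}$-null set $\partial\Omega\setminus\bigcup_{r>0}G_{\eps,r}$, and conclude by continuity from above, which is legitimate since $\Haus^{d-1}(\partial\Omega)<\infty$.
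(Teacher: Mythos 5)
Your argument is correct, and it fills in a proof that the paper deliberately omits: the text only records that the statement goes back to \cite[Section~4]{Brown_93} and ``is based on Rademacher's theorem on almost everywhere differentiability of Lipschitz functions'', which is exactly the route you take (finite cover by Lipschitz graph patches, Rademacher at $\Haus^{d-1}$-a.e.\ boundary point, then an upgrade from pointwise to measure). One small imprecision: the patch $V_j$ furnished by the Lebesgue-number argument for a given $p$ need not be the patch $V_i$ on whose base set you ran Egorov, so the intermediate claim ``every $p$ over $E_i$ is $(\eps,r)$-good'' is not literally justified as stated; but your final inclusion $\partial\Omega\setminus G_{\eps,r}\subset\bigcup_i\{(x',f_i(x')):x'\in U_i'\setminus E_i\}$ survives, since a point $p$ avoiding all the exceptional graph pieces has $\pi_j(p)\in E_j$ in particular for the Lebesgue-number patch $j$, and the graph computation there applies to all of $B_r(p)\cap\partial\Omega$. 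Note also that your closing remark is in fact the cleanest finish: since $G_{\eps,r}$ increases as $r$ decreases and $\Haus^{d-1}$-a.e.\ boundary point lies in $\bigcup_{r>0}G_{\eps,r}$ by the Rademacher step, continuity from above of the finite measure $\Haus^{d-1}\llcorner\partial\Omega$ gives the limit directly (granting measurability of $G_{\eps,r}$), so the Egorov machinery, while correct, is not needed.
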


It follows that for any fixed $\eps> 0$ we can find $r>0$ small enough so that $G_{\eps, r}$ is non-empty. Furthermore, defining for $\eps>0$
\begin{equation}\label{eq:defmu}
  \Egp_\Omega(\eps, r) = \frac{\Haus^{d-1}(\partial\Omega\setminus G_{\eps, r})}{\Haus^{d-1}(\partial\Omega)} \, , 
\end{equation}
there is an $r>0$ so that $\Egp(\eps, r)$ is arbitrarily small. We shall often write simply~$\Egp$ and leave the dependence on $\Omega$ implicit. For the next lemma we recall that $\overline\Emink$ was defined in~\eqref{eq:defemink}.

\begin{lemma}[{\cite[Proposition~1.3]{Brown_93}, \cite[Lemma~2.7]{BanuelosEtAl_09}}]\label{lem:GoodSetIsBig}
  Let $\eps \in (0, 1]$ and $r>0$. Then there exists an $s_0=s_0(\eps, r, \Omega)>0$ such that for all $s\leq s_0$, 
  \begin{equation}\label{eq:bound_goodsetisbig}
    |\{u \in \R^d: \dist(u, \partial \Omega)<s\}\setminus \G_{\eps, r}| \leq 
    2s (\Egp(\eps, r) +\overline\Emink(s) + \eps^2)\Haus^{d-1}(\partial\Omega)\, .
  \end{equation}
\end{lemma}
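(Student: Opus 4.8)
The plan is to bound the bad set $\{u : \dist(u, \partial\Omega) < s\} \setminus \G_{\eps, r}$ by splitting it into two pieces: points that are close to a good point of $\partial\Omega$ but nonetheless escape the union of cones $\G_{\eps, r}$, and points whose nearest boundary point is itself a bad point, i.e.\ in $\partial\Omega \setminus G_{\eps, r}$. First I would fix $u$ with $\dist(u, \partial\Omega) < s$ and pick $p' \in \partial\Omega$ realizing the distance, so $|u - p'| < s$. If $p' \in G_{\eps, r}$ (and $s$ is small enough that $|u-p'| < r/2$), then one needs to check whether $u \in \Gamma_{\eps, r}(p')$; this holds iff $|(u - p')\cdot \nu(p')| > \sqrt{1-\eps^2}|u-p'|$, i.e.\ iff $u$ lies inside the thin two-sided cone around the normal direction at $p'$. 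Since $p'$ is the \emph{nearest} point, the vector $u - p'$ is close to being parallel to $\nu(p')$ — in fact for a $C^{1,1}$ or even merely good boundary the deviation is controlled — but to be safe one should quantify this. The key elementary fact is that if $p'$ is $(\eps, r)$-good and $u - p'$ is normal-ish, then $u\in\Gamma_{\eps,r}(p')$ \emph{unless} the angle between $u-p'$ and $\nu(p')$ is within $\arccos\sqrt{1-\eps^2} = \arcsin\eps$ of being perpendicular; the set of such "bad angle" directions at each good point, integrated over a tube of width $s$, contributes a volume $\lesssim \eps^2 \cdot s \Haus^{d-1}(\partial\Omega)$, which accounts for the $\eps^2$ term.

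More precisely, I would organize the estimate using the coarea formula / Minkowski-content bound \eqref{eq:Minkowski_content}: the whole tube $\{u : \dist(u, \partial\Omega) < s\}$ has measure at most $2s(1 + \overline\Emink(s))\Haus^{d-1}(\partial\Omega)$. From this I subtract a lower bound for $|\{u : \dist(u,\partial\Omega)<s\} \cap \G_{\eps,r}|$. To get that lower bound, restrict attention to the portion of the tube sitting over $G_{\eps,r}$: its measure is at least $2s(1 - \text{(correction)})\Haus^{d-1}(G_{\eps,r}) = 2s(1-\text{(correction)})(1-\Egp(\eps,r))\Haus^{d-1}(\partial\Omega)$, where the correction again comes from $\overline\Emink$. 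Within this sub-tube, every point whose nearest boundary point $p'$ is good and which has $|u-p'|<r/2$ \emph{and} lies in the correct normal cone $\Gamma_{\eps,r}(p')$ belongs to $\G_{\eps,r}$; the complementary "bad angle" set has relative measure $O(\eps^2)$ in each normal fiber (this is a one-dimensional trigonometric computation: the fraction of the segment $\{p' + t\nu(p') : |t| < s\}$... actually one must argue in the transverse direction, showing the nearest-point map pushes the tube into the cone up to an $O(\eps^2 s)$ error). Collecting terms: the bad set has measure at most $2s\Haus^{d-1}(\partial\Omega)$ times $\Egp(\eps,r) + (\text{terms involving } \overline\Emink(s)) + C\eps^2$, and choosing $s_0$ small enough that the $\overline\Emink$ contributions are absorbed into $\overline\Emink(s)$ itself (which is what the definition \eqref{eq:defemink} with the supremum is designed for) gives \eqref{eq:bound_goodsetisbig}.

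The main obstacle I anticipate is making the "$\eps^2$" bookkeeping honest: one has to show carefully that a point $u$ in the $s$-tube over a good point $p'$, with $|u-p'| = \dist(u,\partial\Omega)$ small, actually lies in $\Gamma_{\eps,r}(p')$ \emph{except} for a set of relative measure $O(\eps^2)$, and this requires relating the direction $u - p'$ to $\nu(p')$ using only the $(\eps,r)$-good condition (not smoothness). The cleanest route is probably: since $p'$ is nearest, the open ball $B_{|u-p'|}(u)$ misses $\partial\Omega$, hence lies on one side locally; combined with $B_r(p')\cap\partial\Omega \subset \{|(x-p')\cdot\nu(p')| < \eps|x-p'|\}$ this forces $u - p'$ to make angle at most $\arcsin\eps$ (or so) with $\pm\nu(p')$, so in fact $|(u-p')\cdot\nu(p')| \geq \sqrt{1-\eps^2}|u-p'|$ automatically — wait, that would make the $\eps^2$ term come only from the truncation at $r/2$ and the mismatch between $G_{\eps,r}$ and the cones, not from angles. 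So the honest accounting may instead be: points within distance $s$ of $\partial\Omega$ whose foot point is good but whose distance to the \emph{vertex} exceeds $r/2$ — these are exactly near where the boundary "turns", and one shows via the good-cone condition that such points sit within an $\eps^2$-fraction of the tube. Either way, this geometric lemma (essentially \cite[Prop.~1.3]{Brown_93}) is where all the care goes; the rest is Minkowski-content bookkeeping with \eqref{eq:Minkowski_content} and the definition of $\overline\Emink$.
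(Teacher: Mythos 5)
Your opening move coincides with the paper's: write the bad set as the full tube minus the tube intersected with $\G_{\eps, r}$, and control the full tube by \eqref{eq:Minkowski_content}. But the heart of the lemma is the \emph{lower} bound for $|\{\dist(u, \partial\Omega)<s\}\cap \G_{\eps, r}|$, and here your proposal has a genuine gap. You assert that ``the portion of the tube sitting over $G_{\eps, r}$'' has measure at least $2s(1-\text{correction})\Haus^{d-1}(G_{\eps, r})$ with the correction coming from $\overline\Emink$ — but $\overline\Emink$ only controls the measure of the \emph{whole} tube, and there is no Minkowski-content identity that lower-bounds the volume of the set of points whose nearest boundary point lies in a prescribed measurable subset of $\partial\Omega$ by $2s\,\Haus^{d-1}$ of that subset. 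Indeed such a bound is false for general subsets (the bad part of the boundary can have large $\Haus^{d-1}$ measure while the tube over it is small — that is the whole point of the lemma), so the goodness of the points must enter precisely at this step, and your nearest-point-projection route never makes it enter quantitatively. Your own vacillation about whether the $\eps^2$ comes from ``bad angles,'' from the truncation at $r/2$, or from where the boundary ``turns'' reflects this: the argument is not pinned down.

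The paper's proof resolves exactly this point differently. One chooses finitely many directions $\nu_1, \dots, \nu_N\in\S^{d-1}$ and disjoint closed sets $F_1, \dots, F_N\subset G_{\eps, r}$ with $\Haus^{d-1}(G_{\eps, r}\setminus\bigcup_i F_i)\leq\delta\Haus^{d-1}(G_{\eps, r})$ and $|\nu(p)-\nu_i|\leq\eps$ on $F_i$; one checks (as in Brown and Ba\~nuelos--Kulczycki--Siudeja) that $p+\rho\nu_i\in\Gamma_{\eps, r}(p)\subset\G_{\eps, r}$ for $p\in F_i$, $|\rho|<r/2$, and that $(p, \rho)\mapsto p+\rho\nu_i$ is injective there. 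The area formula then gives
\begin{equation}
|\{p+\rho\nu_i: p\in F_i, \ |\rho|<s\}| \ \geq\ (1-\eps^2/2)\, 2s\, \Haus^{d-1}(F_i)\, ,
\end{equation}
the factor $1-\eps^2/2$ being the Jacobian lower bound $\nu_i\cdot\nu(p)\geq 1-\eps^2/2$ coming from $|\nu(p)-\nu_i|\leq\eps$; summing over $i$, taking $\delta=\eps^2/2$, and using $\Egp\leq 1$ produces exactly the $\Egp+\overline\Emink(s)+\eps^2$ combination (with $s_0\leq\min\{r/2, \min_{i\neq j}\dist(F_i, F_j)/2\}$ guaranteeing disjointness of the slabs). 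So the $\eps^2$ is an angle-between-fixed-direction-and-normal Jacobian loss plus the covering error $\delta$, not a ``bad angle'' subset of the tube and not the cone truncation. If you want to salvage your nearest-point approach, you would have to prove both that the metric projection of a tube point onto a good boundary point forces membership in $\Gamma_{\eps, r}$ (plausible but needing the local Lipschitz graph structure, not just the cone condition at the single point) \emph{and} a lower bound on the measure of the preimage of $G_{\eps, r}$ under the metric projection — and the latter is essentially as hard as the lemma itself; the fixed-direction flow plus area formula is the device that circumvents it.
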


\begin{proof}[Proof of Lemma~\ref{lem:GoodSetIsBig}]
  The proof follows closely those of Lemma~2.7 and Proposition~1.3 in~\cite{BanuelosEtAl_09} and~\cite{Brown_93}, respectively. Write
  \begin{equation}\label{eq:GoodSetIsBig_proof1}
  \begin{aligned}
    |\{u\in \R^d: \dist(u, \partial\Omega)<s\}\setminus \G_{\eps, r}| 
    &= |\{u\in \R^d: \dist(u, \partial\Omega)<s\}|\\
    &\quad-|\{u\in \R^d: \dist(u, \partial\Omega)<s\}\cap \G_{\eps, r}|\, .
  \end{aligned}
  \end{equation}
  The first term on the right side can be controlled using~\eqref{eq:Minkowski_content}.
  To bound the second one, for some $\delta>0$ to be determined later, choose $\nu_1, \ldots, \nu_N\in \S^{d-1}$ and disjoint closed sets $F_1, \ldots, F_N \subset G_{\eps, r}$ such that $\Haus^{d-1}(G_{\eps, r}\setminus \bigcup_{i=1}^N F_i)\leq \delta \Haus^{d-1}(G_{\eps, r})$ and $|\nu(p)-\nu_i|\leq \eps$ for all $p\in F_i$. Mimicking the proofs in~\cite{Brown_93, BanuelosEtAl_09} one finds that $p+\rho \nu_i\in \Gamma_{\eps, r}(p)$ for $p\in F_i$ and $-r/2<\rho<r/2$ and that the map $(p, \rho)\mapsto p+\rho\nu_i$ is injective for $p\in F_i$ and $-r/2<\rho<r/2$.

  If $s_0$ is less than or equal to both $r/2$ and $\min_{i\neq j}\dist(F_i, F_j)/2$ one obtains by the area formula~\cite[Theorem~3.2.3]{Federer_book} that, for $0<s\leq s_0$, 
  \begin{equation}\label{eq:GoodSetIsBig_proof2}
  \begin{aligned}
    |\{u\in \R^d: \dist(u, \partial\Omega)<s\}\cap \G_{\eps, r}| &\geq \sum_{i=1}^N |\{p+\rho\nu_i: p\in F_i, -s<\rho<s\}|\\
    &\geq 
   (1-\eps^2/2)\sum_{i=1}^N \int_{\{p+\rho\nu_i: p\in F_i, -s<\rho<s\}} \frac{dx}{\nu_i\cdot\nu(p)} \\
    & = 2s(1-\eps^2/2)\sum_{i=1}^N \Haus^{d-1}(F_i)\\
    &\geq
    2s(1-\eps^2/2)(1-\delta)\Haus^{d-1}(G_{\eps, r})\\
    &=
     2s (1-\Egp(\eps, r))(1-\eps^2/2)(1-\delta)\Haus^{d-1}(\partial\Omega)\, .
  \end{aligned}
  \end{equation}
  
  Combining~\eqref{eq:GoodSetIsBig_proof1},~\eqref{eq:GoodSetIsBig_proof2} and the definition of~$\overline\Emink$ yields
  \begin{equation}
    |\{u\in \R^d: \dist(u, \partial\Omega)<s\}\setminus \G_{\eps, r}| \leq 2s\Haus^{d-1}(\partial\Omega)(1-(1-\Egp(\eps, r))(1-\eps^2/2)(1-\delta)+\overline\Emink(s))\, .
  \end{equation}
  Choosing $\delta=\eps^2/2$ and recalling that $\Egp(\eps, r)\leq 1$ completes the proof.
\end{proof}


\section{Asymptotics for Lipschitz domains}\label{sec:MainProof}

Our goal in this section is to prove the following

\begin{theorem}\label{thm:MainTheorem_alt}
Let $\Omega\subset\R^d$, $d\geq 2$, be a bounded open set with Lipschitz regular boundary. Then, as $h\to 0^\limplus$, 
  \begin{equation}
    \Tr(H_\Omega)_\limminus = L_d |\Omega| h^{-d}-\frac{L_{d-1}}{4}\Haus^{d-1}(\partial \Omega)h^{-d+1}+o(h^{-d+1})\, .
  \end{equation}
\end{theorem}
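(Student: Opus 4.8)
The plan is to localize $H_\Omega$ using Lemma~\ref{lem:localization} with a length scale $l$ that is comparable to $\dist(u,\Omega^c)$ in the bulk and is cut off at a small parameter $l_0$ near the boundary, exactly as in the proof of Lemma~\ref{lem:LocalAsympCone}. Concretely, fix small parameters $\eps>0$ and $r>0$ (to be sent to zero at the very end, after $h\to0^+$), and set $l(u)=\tfrac12\min\{\diam\Omega,\ \max\{\dist(u,\Omega^c),\,2l_0\}\}$ (suitably truncated so $\|\nabla l\|_\infty\le1/2$), with $h\le l_0$. After applying Lemma~\ref{lem:localization} with $\varphi\equiv1$, the error from localization is $O(h^{-d+2})\int_{\dist(u,\partial\Omega)\le l(u)} l(u)^{-2}\,du$, which is $O(l_0^{-1}h^{-d+2})$ by the same integral estimate (\cite[Eq.~8]{FrankGeisinger_11}) used for the cone. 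I then split $\int_{\R^d}\Tr(\phi_u H_\Omega\phi_u)_- l(u)^{-d}\,du$ into three regions: (I) $u$ such that $\emptyset\neq\supp\phi_u\subset\Omega$, handled by Lemma~\ref{lem:AsympBulk}; (II) $u\in\G_{\eps,r}$ with $\supp\phi_u\cap\partial\Omega\neq\emptyset$, which is the "good" region; and (III) the remaining "bad" region $\{u:\dist(u,\partial\Omega)\le l_0,\ \supp\phi_u\cap\partial\Omega\neq\emptyset\}\setminus\G_{\eps,r}$.

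For region (I), Lemma~\ref{lem:AsympBulk} gives the leading term $L_d h^{-d}\int_{\text{(I)}}\int_\Omega\phi_u^2 l^{-d}\,dx\,du$ with error $O(h^{-d+2})\int l^{-2}\,du=O(l_0^{-1}h^{-d+2})$. For region (II), the key geometric input is that by the definition of $\G_{\eps,r}$ and Lemma~\ref{lem:vertex_is_close}, for $u\in\G_{\eps,r}$ with $l(u)=l_0\le r/2$ the set $\partial\Omega\cap B_{l(u)}(u)$ lies in the complement of a two-sided cone of opening determined by $\eps$, and the relevant good point $p$ satisfies $|u-p|\le 2\dist(u,\partial\Omega)\le 2l_0$; hence locally $\Omega$ is squeezed between $\Lambda_\eps$ and $\Lambda_\eps^c$-type cones centered near $u$. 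This lets me apply Lemma~\ref{lem:LocalAsympCone} (the two-sided-cone asymptotics, precisely why cones and not just hyperplanes are needed) to each $\Tr(\phi_u H_\Omega\phi_u)_-$, producing the two leading terms $L_d h^{-d}\int\phi_u^2-\tfrac{L_{d-1}}4 h^{-d+1}\int_{\partial\Omega}\phi_u^2$ plus an error $O(l_0^{d-4/3}h^{-d+4/3})$ per unit $u$-volume, integrated against $l_0^{-d}\,du$ over a set of measure $O(l_0\,\Haus^{d-1}(\partial\Omega))$, i.e.\ total error $O(l_0^{-1/3}h^{-d+4/3}\Haus^{d-1}(\partial\Omega))$. (Strictly, one must approximate $\partial\Omega\cap\supp\phi_u$ by the exact cone; the difference in the $\phi_u^2$-integrals over $\Omega$ versus the cone, and over $\partial\Omega$ versus $\partial\Lambda_\eps$, is $O(\eps\, l_0^{d})$ resp.\ $O(\eps\, l_0^{d-1})$ by the cone containment, contributing $O(\eps h^{-d}l_0+\eps h^{-d+1})$ after integration — an error that vanishes when $\eps\to0$.) For region (III), I use only the Berezin--Li--Yau bound Lemma~\ref{lem:BerezinLiebLiYau}: $\Tr(\phi_u H_\Omega\phi_u)_-\le L_d h^{-d}\int\phi_u^2$, so this block contributes at most $C h^{-d}|\text{region (III)}|\le C h^{-d}\cdot 2l_0(\Egp(\eps,r)+\overline\Emink(l_0)+\eps^2)\Haus^{d-1}(\partial\Omega)$ by Lemma~\ref{lem:GoodSetIsBig}, valid once $l_0\le s_0(\eps,r,\Omega)$.

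Finally I reassemble. Using $\int_{\R^d}\phi_u^2(x) l(u)^{-d}\,du=1$ (Eq.~\eqref{eq:phi_properties1}), the sum of the leading terms over regions (I) and (II) reconstructs $L_d h^{-d}\int_\Omega\varphi^2 - \tfrac{L_{d-1}}4 h^{-d+1}\int_{\partial\Omega}\varphi^2$ up to: (a) the boundary term's missing contribution from region (III), bounded using $\Haus^{d-1}(\partial\Omega\cap\supp\phi_u)\le C l_0^{d-1}$ and $|\text{(III)}|\le C l_0$ times Lemma~\ref{lem:GoodSetIsBig}'s small factor, times $h^{-d+1}$; (b) the region where $\phi_u$ touches $\partial\Omega$ but $u$ is counted in (I)'s complement for other reasons — a neighborhood of $\partial\Omega$ of width $O(l_0)$, whose $\int\phi_u^2 l^{-d}$-mass against $|\Omega\cap\cdot|$ differs from $|\Omega|$ by $O(l_0)$ and whose $\Haus^{d-1}$-mass differs by the Minkowski error $\overline\Emink(l_0)$. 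Collecting, for $h\le l_0\le s_0(\eps,r,\Omega)$,
\begin{equation}
  \bigl|\Tr(H_\Omega)_- - L_d|\Omega|h^{-d} + \tfrac{L_{d-1}}4\Haus^{d-1}(\partial\Omega)h^{-d+1}\bigr|
  \le C h^{-d}\Bigl(l_0^{-1}h^{2} + l_0^{-1/3}h^{4/3} + l_0\bigl(\Egp(\eps,r)+\overline\Emink(l_0)+\eps^2\bigr)\Bigr)\Haus^{d-1}(\partial\Omega) + C l_0 h^{-d}.
\end{equation}
Now choose $l_0=l_0(h)\to0$ with $h/l_0\to0$ (e.g.\ $l_0=h^{2/3}$ makes the first two bracket terms $O(h^{2/3})$ and $O(h^{2/3})$ hence $o(1)$ after the $h^{-d}$; wait, one wants $o(h^{-d+1})$, so one needs the bracket to be $o(h)$: take $l_0$ with $l_0/h\to\infty$ slowly, so $l_0^{-1}h^2=o(h)$, and require also $l_0=o(h)$?— impossible simultaneously). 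The correct choice balances differently: take $l_0\to0$ with $h=o(l_0)$; then $l_0^{-1}h^2=o(h)$ fails unless $h/l_0\to0$ which holds, giving $l_0^{-1}h^2\le h\cdot(h/l_0)=o(h)$ — good; $l_0^{-1/3}h^{4/3}=h\cdot(h/l_0)^{1/3}=o(h)$ — good; $l_0\,\overline\Emink(l_0)=o(l_0)$; and the residual $l_0 h^{-d}$ must be $o(h^{-d+1})$, i.e.\ $l_0=o(h)$ — which contradicts $h=o(l_0)$. Hence that crude $Cl_0h^{-d}$ term must instead be absorbed: it comes from region (III) only, where the measure bound already carries the small factor $\Egp+\overline\Emink+\eps^2$, so in fact region (III) contributes $C h^{-d}l_0(\Egp(\eps,r)+\overline\Emink(l_0)+\eps^2)\Haus^{d-1}(\partial\Omega)$ with no free $Cl_0h^{-d}$; choosing $l_0=h^{2/3}$ the bracket is $C(h^{2/3}+\Egp(\eps,r)+\overline\Emink(h^{2/3})+\eps^2)$, so
\begin{equation}
  \limsup_{h\to0^+} h^{d-1}\bigl|\Tr(H_\Omega)_- - L_d|\Omega|h^{-d}+\tfrac{L_{d-1}}4\Haus^{d-1}(\partial\Omega)h^{-d+1}\bigr|
  \le C\,(\Egp(\eps,r)+\eps^2)\,\Haus^{d-1}(\partial\Omega).
\end{equation}
Since $\eps>0$ is arbitrary and $\inf_{r>0}\Egp(\eps,r)=0$ by Lemma~\ref{lem:ExistenceGoodSet}, letting $\eps\to0^+$ (choosing $r=r(\eps)$ so $\Egp(\eps,r)\le\eps$) gives the claimed $o(h^{-d+1})$.

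\textbf{Main obstacle.} The delicate point is the treatment of region (II): one must verify that for $u\in\G_{\eps,r}$ the localized operator $\phi_u H_\Omega\phi_u$ is genuinely comparable to the cone operator of Lemma~\ref{lem:LocalAsympCone} — i.e.\ that $\partial\Omega\cap B_{l(u)}(u)$, after translating the cone vertex from the good point $p$ to a point near $u$, lies in the complement of $\Lambda_\eps$ (up to the vertex displacement controlled by Lemma~\ref{lem:vertex_is_close}), and that the resulting discrepancies in $\int_\Omega\phi_u^2\,dx$ and $\int_{\partial\Omega}\phi_u^2\,d\Haus^{d-1}$ are $O(\eps\,l_0^d)$ and $O(\eps\,l_0^{d-1})$ respectively. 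Making this comparison rigorous — in particular that Lemma~\ref{lem:LocalAsympCone}, which is stated for the model cone $\Lambda_\eps$, can be applied after such a translation and controlled perturbation — is where the real work of the proof lies; the bookkeeping of the various $l_0$, $\delta$, $\eps$, $r$ parameters and their order of limits ($h\to0$, then $\eps\to0$ with $r=r(\eps)$) is the other source of care.
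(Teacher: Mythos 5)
Your overall skeleton (localization at the scale $l(u)\sim\max\{\dist(u,\Omega^c),2l_0\}$, splitting into bulk, good and bad regions, Berezin--Li--Yau plus Lemma~\ref{lem:GoodSetIsBig} for the bad region, and squeezing $\Tr(\phi_u H_\Omega\phi_u)_\limminus$ between traces for the inner and outer cones so that Lemma~\ref{lem:LocalAsympCone} applies) is the same as the paper's. But there are two genuine gaps. The first, and deeper, one is your treatment of the boundary term in the good region: you assert that ``the difference in the $\phi_u^2$-integrals \dots over $\partial\Omega$ versus $\partial\Lambda_\eps$ is $O(\eps\,l_0^{d-1})$ by the cone containment.'' That pointwise estimate is unjustified and false in general for Lipschitz boundaries: the $(\eps,r)$-good condition at $p(u)$ only confines $\partial\Omega\cap B_{l_0}(u)$ to a thin double-cone-complement region, and inside that region the boundary can still oscillate with slopes up to the Lipschitz constant of $\Omega$, so $\Haus^{d-1}(\partial\Omega\cap B_{l_0}(u))$ can exceed the measure of the corresponding cone boundary by a fixed factor, uniformly in $\eps$. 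The squeeze genuinely yields the cone (or half-space) boundary integrals, not $\int_{\partial\Omega}\phi_u^2\,d\Haus^{d-1}$, and passing from the former to $\Haus^{d-1}(\partial\Omega)$ cannot be done per $u$. The paper handles exactly this point differently: it compares with the half-space $L^*(u)$ satisfying $\dist(u,\partial L^*(u))=\dist(u,\partial\Omega)$, uses a \emph{radial} $\phi$ to write $\int_{\partial L^*(u)}\phi_u^2\,d\Haus^{d-1}=l_0^{d-1}f(\dist(u,\partial\Omega)/l_0)$, and then integrates in $u$ with the co-area formula, exploiting that $l_0^{-1}f(\cdot/l_0)$ is an approximate identity and that $\Emink_{inner},\Emink_{outer}\to0$ (the quantitative two-sided version of Brown's argument in Section~\ref{sec:Contribution good part of the boundary}); this averaged step, plus the measurable selection of $p(u)$ and $L^*(u)$, is what actually produces the coefficient $\Haus^{d-1}(\partial\Omega)$, and it is absent from your plan.

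The second gap is the parameter scheduling. With $\eps$ and $r$ fixed while $h\to0$ and $l_0=h^{2/3}$, the bad-region contribution $C h^{-d}l_0\bigl(\Egp(\eps,r)+\overline\Emink(l_0)+\eps^2\bigr)$ equals $h^{-1/3}\bigl(\Egp(\eps,r)+\eps^2+o(1)\bigr)h^{-d+1}$, which diverges relative to $h^{-d+1}$; the same happens to your $O(\eps\,h^{-d}l_0)$ volume-discrepancy term, since $l_0/h\to\infty$. Your displayed conclusion that ``the bracket is $C(h^{2/3}+\Egp(\eps,r)+\overline\Emink(h^{2/3})+\eps^2)$'' silently drops this factor $l_0/h$, so the claimed $\limsup$ bound does not follow. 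The cure is the paper's: keep $l_0=h/\eps_0$ proportional to $h$, with $\eps_0$ a separate parameter, so the localization and cone errors are $\eps_0 h^{-d+1}$ and $\eps_0^{1/3}h^{-d+1}$ while the bad-region and discrepancy errors carry $(\Egp(\eps,r)+\overline\Emink(l_0)+\eps^2)/\eps_0$ and $\eps/\eps_0$, and then send $h\to0$, $r\to0$, $\eps\to0$, $\eps_0\to0$ in that order. As written, your choice $l_0=h^{2/3}$ cannot be reconciled with the fixed $\eps,r$ limit order you prescribe.
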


Clearly, this is equivalent to Theorem~\ref{thm:MainTheorem}. Our proof of Theorem \ref{thm:MainTheorem_alt} depends on three parameters 
$$
\eps_0\in(0, 4] \, , \qquad
\eps\in(0, 1/2] \, , \qquad
r>0
$$
and we shall show that for each such choice of parameters there is an $h_0(\eps_0, \eps, r, \Omega)>0$ such that for all $0<h\leq h_0(\eps_0, \eps, r, \Omega)$ one has
\begin{equation}\label{eq:asymptotic_ineq}
  h^{d-1}\Bigl|\Tr(H_\Omega)_\limminus - L_d |\Omega|h^{-d}+ \frac{L_{d-1}}{4}\Haus^{d-1}(\partial\Omega)h^{-d+1}\Bigr|
  \leq
  C \Bigl(\eps_0^{1/3}+ \frac{\eps}{\eps_0}+ \frac{\overline\Emink(l_0)}{\eps_0}+ \frac{\mu(\eps, r)}{\eps_0}\Bigr)\, , 
\end{equation}
where $C$ is a constant that depends in an explicit way on $\Omega$. Here $\overline\Emink(s)$ and $\mu(\eps, r)$ are the functions from \eqref{eq:defemink} and \eqref{eq:defmu}. Recalling that $\lim_{t\to 0}\overline\Emink(t)=0$ and $\lim_{r\to 0}\Egp(\eps, r)=0$ for any fixed $\eps>0$ (see Lemma \ref{lem:ExistenceGoodSet}), Theorem~\ref{thm:MainTheorem_alt} follows from~\eqref{eq:asymptotic_ineq} by letting $h, r, \eps$ and $\eps_0$ tend to zero in that order. 

There is nothing special about the assumption that $\eps_0\leq 4$. Any choice of upper bound is sufficient to complete the proof and would only result in a change of the constant $C$ in~\eqref{eq:asymptotic_ineq}. However, for our analysis in Section~\ref{sec:uniformity_convex} allowing $\eps_0\in (0, 4]$ will be convenient.

We now give the details of our construction. We introduce a local length scale
\begin{equation}\label{eq:lengthscale_mainproof}
  l(u)=\frac{1}{2}\max\{\dist(u, \Omega^c), 2 l_0\}
\end{equation}
with a parameter $0<l_0\leq r_{in}(\Omega)/2$ that we will write as
$$
l_0 = h/\eps_0 \qquad \mbox{for } 0<h\leq 2r_{in}(\Omega)\, .
$$
Here $\eps_0\in(0, 4]$ is one of the parameters of our construction. We note in passing that the above definition of $l(u)$ is similar, but simpler than that in~\cite{FrankGeisinger_11, FrankGeisinger_12} and has a natural scaling.

Note that $0<l(u)\leq r_{in}(\Omega)/2$ and that, using \eqref{eq:Eikonal}, $\|\nabla l\|_{L^\infty} \leq 1/2$.

Fix a function $\phi\in C_0^\infty(\R^d)$ with $\supp\phi =\overline{B_1(0)}$ and $\|\phi\|_{L^2}=1$. Later on, it will also be important that $\phi$ is radially symmetric.

Lemma \ref{solovejspitzer} now yields a family of functions $(\phi_u)_{u\in\R^d}$ such that $\supp\phi_u =\overline{B_{l(u)}(u)}$ and~\eqref{eq:phi_properties1} and~\eqref{eq:phi_properties} are satisfied.

In what follows we will use the convention that $C$ denotes a constant which may change from line to line but only depends on the dimension and the choice of $\phi$. In particular, we emphasize that it is independent of $\Omega$. Similarly, when we write $O(\, \cdot\, )$ the implicit constant is independent of~$\Omega$ and all the parameters of the construction.

If $h\leq 2 r_{in}(\Omega)$ then 
$$
  \min_{\dist(u, \Omega)\leq l(u)}l(u)= h/\eps_0 \geq h/4\, .
$$ 
Thus, for $0<h\leq 2r_{in}(\Omega)$ we can apply Lemma~\ref{lem:localization}, with $M=4$ and $\varphi \equiv 1$, and reduce our problem to studying the local contributions to the trace $\Tr(\phi_uH_\Omega\phi_u)_\limminus$. (The fact that the integral on the right side of~\eqref{eq:localization_lemma} is indeed negligible for small $\eps_0$ will be proven below in \eqref{eq:localizationerrormain}.)

We now continue our construction and fix the parameters $\eps\in(0, 1/2]$ and $r>0$ and define the sets $\G_{\eps, r}$ and $G_{\eps, r}$ as in the previous section. According to Lemma \ref{lem:ExistenceGoodSet} we may and will assume in the following that given $\eps$, the parameter $r$ is chosen so small that $G_{\eps, r}$ is non-empty.
 
We divide the set of $u\in \R^d$ where $\Tr(\phi_uH_\Omega\phi_u)_\limminus$ is non-zero into three parts, 
\begin{equation}\label{eq:Omega_split}\begin{aligned}
  \Omega_* &= \{u \in \R^d: \supp \phi_u \subset\Omega\}\, , \\
  \Omega_g &= \{u \in \G_{\eps, r}: \supp \phi_u \cap\partial\Omega \neq \emptyset\}\, , \\
  \Omega_b &= \{u \in \R^d\setminus \G_{\eps, r}: \supp \phi_u \cap \partial\Omega\neq \emptyset\}\, .
\end{aligned}\end{equation}
Clearly these three sets are disjoint and $\Tr(\phi_uH_\Omega \phi_u)_\limminus =0$ for $u\notin \Omega_*\cup\Omega_g\cup\Omega_b$. Splitting the integral of Lemma~\ref{lem:localization} according to this partition we have
\begin{equation}\label{eq:integral_split}
  \begin{aligned}
    \int_{\R^d}\Tr(\phi_uH_\Omega \phi_u)_\limminus l(u)^{-d}\, du 
    &=
    \int_{\Omega_*}\Tr(\phi_uH_\Omega \phi_u)_\limminus l(u)^{-d}\, du\\
    &\quad +
    \int_{\Omega_g}\Tr(\phi_uH_\Omega \phi_u)_\limminus l(u)^{-d}\, du\\
    &\quad +
    \int_{\Omega_b}\Tr(\phi_uH_\Omega \phi_u)_\limminus l(u)^{-d}\, du\, .
  \end{aligned}
\end{equation}

Let us pause for a moment and review the overall strategy of our proof. In $\Omega_*$ the effect of the boundary is not felt and a sufficiently precise asymptotic expansion follows from Lemma~\ref{lem:AsympBulk}. By Lemma~\ref{lem:GoodSetIsBig} the set $\Omega_b$ is small and its contribution to the trace is negligible.
The set which is most difficult to analyse is $\Omega_g$. Here the asymptotics in cones from Lemma \ref{lem:LocalAsympCone} will play an important role.

\subsection{Some auxiliary estimates}

To control the error terms appearing in the proof we need to be able to control $l(u)$ on the sets in~\eqref{eq:Omega_split}.

We begin with the following observation, 
\begin{equation}
\label{eq:sets}
\Omega_g\cup\Omega_b = \{ u\in\R^d:\ \dist(u, \partial\Omega)\leq l_0\} \, .
\end{equation}
Indeed, by definition of $\Omega_g$ and $\Omega_b$ and since $\supp\phi=\overline{B_1(0)}$, the set on the left equals $\{u\in\R^d:\ \dist(u, \partial\Omega)\leq l(u)\}$. Therefore we need to prove that for any $u\in\R^d$, one has $\dist(u, \partial\Omega)\leq l_0$ if and only if one has $\dist(u, \partial\Omega)\leq l(u)$. This is trivial if $\dist(u, \Omega^c)\leq 2l_0$, since then $l(u)=l_0$. On the other hand, if $\dist(u, \Omega^c)>2l_0$, then $l(u)=(1/2)\dist(u, \Omega^c)=(1/2)\dist(u, \partial\Omega)$, and therefore neither of the two inequalities holds. This completes the proof of \eqref{eq:sets}.

The equality \eqref{eq:sets} together with~\eqref{eq:Minkowski_content} implies that
\begin{equation}\label{eq:volume_boundary_region}
  |\Omega_g\cup \Omega_b| \leq 2l_0 \Haus^{d-1}(\partial\Omega)(1 + \overline\Emink(l_0))\, .
\end{equation}
Note that it also follows from \eqref{eq:sets} that
$$
l(u)=l_0
\qquad\text{if}\ u\in\Omega_g\cup\Omega_b \, .
$$
Consequently, for any $\alpha\in \R$, 
\begin{equation}\label{eq:errorint_boundary}
  \int_{\Omega_g\cup\Omega_b}l(u)^{\alpha}\, du =  l_0^{\alpha}|\Omega_g\cup \Omega_b| \leq 2\Haus^{d-1}(\partial\Omega)l_0^{1+\alpha}(1+\overline\Emink(l_0))\, .
\end{equation}

We now use \eqref{eq:sets} to bound integrals which will appear as error terms later on. We claim that
\begin{align}
\label{eq:errorint_bulk}
\int_{\Omega_*}l(u)^{-2}\, du \leq C \Haus^{d-1}(\partial\Omega)\bigl[1+\overline\Emink(r_{in}(\Omega))\bigr] l_0^{-1}\, , 
\end{align}
To prove this, we decompose
\begin{equation}
\int_{\Omega_*}l(u)^{-2}\, du = l_0^{-2} |\{u\in\Omega_*: \signdist_\Omega(u)\leq 2l_0\}| + 4 \int_{\signdist_\Omega(u)> 2l_0} \signdist_\Omega(u)^{-2}\, du \, .
\end{equation}
Using \eqref{eq:Eikonal} and the co-area formula and integrating by parts we find
\begin{align}
\int_{\signdist_\Omega(u)> 2l_0} \signdist_\Omega(u)^{-2}\, du
& = \int_{2l_0}^{r_{in}(\Omega)} \Haus^{d-1}(\{u \in \Omega_*: \signdist_\Omega(u)=t\}) t^{-2} \, dt \\
& = 2 \int_{2l_0}^{r_{in}(\Omega)} |\{u \in \Omega_*: \signdist_\Omega(u)\leq t\}| t^{-3} \, dt \\
& \qquad + |\Omega_*| r_{in}(\Omega)^{-2} - \frac{1}{4}|\{u \in \Omega_*: \signdist_\Omega(u)\leq 2l_0 \}|l_0^{-2} \, , 
\end{align}
and therefore
$$
\int_{\Omega_*}l(u)^{-2}\, du \leq 8 \int_{2l_0}^{r_{in}(\Omega)} |\{u \in \Omega: \signdist_\Omega(u)\leq t\}| t^{-3} \, dt + 4 |\Omega| r_{in}(\Omega)^{-2} \, .
$$
The second term on the right side can be bounded by
$$
4 |\Omega| r_{in}(\Omega)^{-2} \leq 2|\Omega|r_{in}(\Omega)^{-1}l_0^{-1}
\leq 2 \Haus^{d-1}(\partial\Omega)\bigl[1+2\overline\Emink(r_{in}(\Omega))\bigr] l_0^{-1} \, .
$$
In order to bound the first term, we use the definition of $\overline\Emink$ and get
\begin{align}
\int_{2l_0}^{r_{in}(\Omega)} |\{u \in \Omega: \signdist_\Omega(u)\leq t\}| t^{-3} \, dt 
& \leq \Haus^{d-1}(\partial\Omega)\bigl[1+2\overline\Emink(r_{in}(\Omega))\bigr] \int_{2l_0}^{r_{in}(\Omega)} t^{-2} \, dt \\
& \leq \frac{1}{2}\Haus^{d-1}(\partial\Omega)\bigl[1+2\overline\Emink(r_{in}(\Omega))\bigr] l_0^{-1} \, .
\end{align}
This completes the proof of~\eqref{eq:errorint_bulk}.

Next, we discuss the localization error coming from~\eqref{eq:localization_lemma}. We claim that
\begin{equation}
\label{eq:localizationerrormain}
h^{-d+2} \int_{\dist(u, \Omega)\leq l(u)} l(u)^{-2}\, du \leq
C \Haus^{d-1}(\partial\Omega)\bigl[1+\overline\Emink(r_{in}(\Omega))\bigr] \eps_0 h^{-d+1} \, .
\end{equation}
Note that this term is negligible for the asymptotics if $\eps_0\ll 1$.

Indeed, taking into account \eqref{eq:sets} this follows from~\eqref{eq:errorint_boundary},~\eqref{eq:errorint_bulk} and the fact that $l_0=h/\eps_0$.


\subsection{Contribution from the bulk\texorpdfstring{ $\Omega_*$}{}}

For the first term on the right side of~\eqref{eq:integral_split}, Lemma~\ref{lem:AsympBulk} and~\eqref{eq:errorint_bulk} yield
\begin{align}
  \int_{\Omega_*}\Tr(\phi_uH_\Omega \phi_u)_\limminus l(u)^{-d}\, du 
  &= 
  \int_{\Omega_*}\biggl(L_d h^{-d}\int_{\Omega}\phi^2_u(x)\, dx + l(u)^{d-2}O(h^{-d+2})\biggr)l(u)^{-d}\, du\\
  &= 
  L_d h^{-d}\int_{\Omega_*}\int_\Omega \phi^2_u(x)l(u)^{-d}\, dx\, du\\
  &\quad  + \Haus^{d-1}(\partial\Omega)\bigl[1+\overline\Emink(r_{in}(\Omega))\bigr] l_0^{-1}O(h^{-d+2})\\
  &=
  L_d h^{-d}\int_{\Omega_*}\int_\Omega \phi^2_u(x)l(u)^{-d}\, dx\, du \\
  &\quad  + \eps_0 \Haus^{d-1}(\partial\Omega)\bigl[1+\overline\Emink(r_{in}(\Omega))\bigr] O(h^{-d+1})\, . 
\end{align}
This is already the desired bound. Note that the second term on the right side is negligible for the asymptotics if $\eps_0\ll 1$.


\subsection{Contribution from the bad part of the boundary\texorpdfstring{ $\Omega_b$}{}}

For the third term on the right side of~\eqref{eq:integral_split}, Lemmas~\ref{lem:BerezinLiebLiYau} and~\ref{lem:GoodSetIsBig} yield
\begin{equation}\label{eq:Contribution_badregion}
\begin{aligned}
  0\leq \int_{\Omega_b}\Tr(\phi_u H_\Omega \phi_u)_\limminus l(u)^{-d}\, du 
  &\leq
  L_d h^{-d}\int_{\Omega_b}\int_\Omega \phi_u^2(x)l(u)^{-d}\, dx\, du\\
  &\leq
  C h^{-d}|\Omega_b| \\
  & \leq C \Haus^{d-1}(\partial\Omega) h^{-d+1}(\Egp(\eps, r)+\overline\Emink(l_0)+\eps^2)/\eps_0 \, , 
\end{aligned}
\end{equation}
Here we used $\Omega_b \subset \{u \in \R^d: \dist(u, \partial\Omega) \leq l_0 \}\setminus \G_{\eps, r}$ and assumed $l_0\leq s_0$ where $s_0$ is the constant from Lemma~\ref{lem:GoodSetIsBig}. The latter condition holds for $h$ small enough depending on $\eps_0$, $\eps$, $r$ and $\Omega$.

The bound \eqref{eq:Contribution_badregion} will be sufficient for us. Note that the term on the right side is negligible for the asymptotics if $(\Egp(\eps, r)+\overline\Emink(l_0)+\eps^2)/\eps_0\ll 1$.


\subsection{Contribution from the good part of the boundary\texorpdfstring{ $\Omega_g$}{}}
\label{sec:Contribution good part of the boundary}

The term coming from $\Omega_g$ is more troublesome to deal with. It is the only term which contributes to the second term of the asymptotic expansion, and thus we need to understand its behavior in more detail.

Let $u\in \Omega_g$. Then by definition there is a $p(u)\in G_{\eps, r}$ such that $u \in \Gamma_{\eps, r}(p(u))$. We define two conical sets associated with $u$, namely, 
\begin{align}
   \I =\I(u)&= \{x \in \R^d: (x-p(u))\cdot \nu(p(u)) > \eps |x-p(u)|\}\, , \\
   \U =\U(u)&= \{x \in \R^d: -(x-p(u))\cdot \nu(p(u)) \geq \eps |x-p(u)|\}^c\, .
\end{align}
We note the inclusions $\I\cap B_r(p) \subseteq \Omega\cap B_r(p) \subseteq \U\cap B_r(p)$ and $\partial\Omega \cap B_r(p)\subset \U \setminus \I$. If $h$ is small enough so that $l_0\leq r/2$ (note that this condition on $h$ depends only on $\eps_0$ and~$r$), then the fact that $l(u)=l_0$ implies that $B_{l(u)}(u)\subset B_{r}(p)$, and so
\begin{align}
\label{eq:inclusionsiu}
\I\cap B_{l(u)}(u) \subseteq \Omega\cap B_{l(u)}(u) \subseteq \U\cap B_{l(u)}(u) \, .
\end{align}

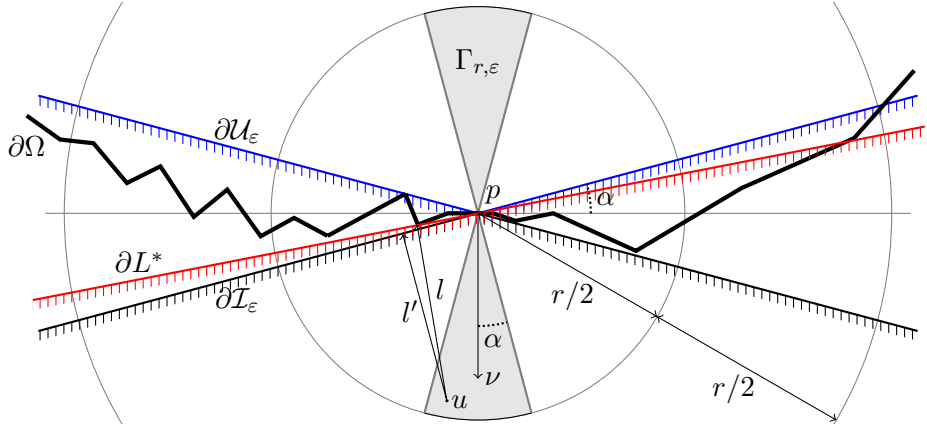
\begin{figure}[ht]
  \centering
  \begin{tikzpicture}[scale=1]

\clip (-7,-2.8) rectangle (7,2.8);

\def \angle {15};
\def \lll {5.5};
\def \wid {0.13};

\draw[gray] (0,0) circle [radius=\lll];
\draw[gray] (0,0) circle [radius={\lll/2}];

\draw[fill=gray!20] (0,0) -- ({-\lll*sin(\angle)/2},{-\lll*cos(\angle)/2}) arc ({270-\angle}:{270+\angle}:{\lll/2})--cycle;

\draw[fill=gray!20] (0,0) -- ({\lll*sin(\angle)/2},{\lll*cos(\angle)/2}) arc ({90-\angle}:{90+\angle}:{\lll/2})--cycle;

\draw[->] (0, 0)--(0, -2.2);
\draw[gray] (-5.75,0)--(5.75,0);

\draw[thick] ({-(\lll*1.1)*cos(\angle)},{-(\lll*1.1)*sin(\angle)})--(0,0)--({(\lll*1.1)*cos(\angle)},{-(\lll*1.1)*sin(\angle)});
\fill[pattern=vertical lines, pattern color=black] (0,0) -- ({-\lll*cos(\angle)*1.1},{-\lll*sin(\angle)*1.1})--({-\lll*cos(\angle)*1.1},{-\lll*sin(\angle)*1.1-\wid})--(0,-\wid)--({(\lll*1.1)*cos(\angle)},{-(\lll*1.1)*sin(\angle)-\wid})--({(\lll*1.1)*cos(\angle)},{-(\lll*1.1)*sin(\angle)})--cycle;

\draw[thick, blue] ({-(\lll*1.1)*cos(\angle)},{(\lll*1.1)*sin(\angle)})--(0,0)--({(\lll*1.1)*cos(\angle)},{(\lll*1.1)*sin(\angle)});
\fill[pattern=vertical lines, pattern color=blue] (0,0) -- ({-\lll*cos(\angle)*1.1},{\lll*sin(\angle)*1.1})--({-\lll*cos(\angle)*1.1},{\lll*sin(\angle)*1.1-\wid})--(0,-\wid)--({(\lll*1.1)*cos(\angle)},{(\lll*1.1)*sin(\angle)-\wid})--({(\lll*1.1)*cos(\angle)},{(\lll*1.1)*sin(\angle)})--cycle;

\draw[black!50,thick] ({-\lll*sin(\angle)/2},{-\lll*cos(\angle)/2})--({\lll*sin(\angle)/2},{\lll*cos(\angle)/2});
\draw[black!50,thick] ({-\lll*sin(\angle)/2},{\lll*cos(\angle)/2})--({\lll*sin(\angle)/2},{-\lll*cos(\angle)/2});

\draw [thick,densely dotted, samples=50,domain=0:\angle] plot ({1.5*cos(\x)}, {1.5*sin(\x)});
\draw [thick,densely dotted, samples=50,domain=270:270+\angle] plot ({1.5*cos(\x)}, {1.5*sin(\x)});

\draw [ultra thick,samples=10,domain=-6:-2] plot ({\x}, {sin(\x*360)/\x+ \x*\x/20-1/2});
\draw [ultra thick](-2,{sin(2*360)/2+ 1/5-1/2})--({-1*cos(\angle)}, {sin(\angle)});
\draw [ultra thick]({-1*cos(\angle)}, {sin(\angle)})--(-0.8,-0.15);
\draw [ultra thick](-0.8,-0.15)--(-0.4,0)--(0.2,0)--(0.5, -0.1)--(1, 0)--(2.1,-1/2)--(3.5,1/3)--(5, 1)--(5.8, 1.9);

\draw[thick,red] ({-\lll*cos(12)*1.1},{-\lll*sin(11)*1.1})--({\lll*cos(11)*1.1},{\lll*sin(11)*1.1});

\fill[pattern=vertical lines, pattern color=red] ({-\lll*cos(12)*1.1},{-\lll*sin(11)*1.1})--({\lll*cos(11)*1.1},{\lll*sin(11)*1.1})--({\lll*cos(11)*1.1},{\lll*sin(11)*1.1-\wid})--({-\lll*cos(12)*1.1},{-\lll*sin(11)*1.1-\wid})--cycle;

\node at (0.2,1/4) {{$p$}};
\node at (-6, 0.9) {{$\partial\Omega$}};
\node at (-4.5, -0.65) {{$\partial L^*$}};
\node at (0.2,-2.2) {{$\nu$}};
\node at (1.7,0.2) {{$\alpha$}};
\node at (0.2,-1.7) {{$\alpha$}};
\node at (0,2) {{$\Gamma_{r,\eps}$}};
\node at (-3.2,-1.15) {{$\partial\I$}};
\node at (-3.2,1.1) {{$\partial\U$}};
\node at (-0.3,-2.5) {{$\cdot u$}};

\draw[<->] (-1,-0.28)--(-0.417,-2.48)--(-0.81,-0.15);

\draw[<->] ({\lll*cos(30)},{-\lll*sin(30)})--({\lll*cos(30)/2},{-\lll*sin(30)/2});
\draw[<->] (0,0)--({\lll*cos(30)/2},{-\lll*sin(30)/2});

\node at ({\lll*cos(41)/3.3},{-\lll*sin(41)/3.3}) {{$r/2$}};
\node at ({3*\lll*cos(34.5)/4},{-3*\lll*sin(34.5)/4}) {{$r/2$}};

\node at (-0.9,-1.3) {{$l'$}};
\node at (-0.5,-1) {{$l$}};

\end{tikzpicture}
  \caption{The different sets involved in the construction. Here $\alpha = \arcsin(\eps)$, $p=p(u)$, $\nu=\nu(p(u))$ and $l=\dist(u, \partial\Omega)=\dist(u, \partial L^*)$ and $l'=\dist(u, \partial \I)$. The shaded two-sided truncated cone is the set $\Gamma_{r, \eps}(p)$.}
  \label{fig:cones}
\end{figure}

It is shown in~\cite{BanuelosEtAl_09} that there is a half-space $L^*=L^*(u)$ such that $p(u)\in\partial L^*$, $\dist(u, \partial L^*)$ $= \dist(u, \partial \Omega)$ and $\I\subset L^*(u) \subset \U.$
These inclusions together with \eqref{eq:inclusionsiu} and domain monotonicity imply that
\begin{align}
  \Tr(\phi_u H_\I \phi_u)_\limminus &\leq \Tr(\phi_u H_\Omega \phi_u)_\limminus \leq \Tr(\phi_u H_\U \phi_u)_\limminus \, , \\
  \Tr(\phi_u H_\I \phi_u)_\limminus &\leq \Tr(\phi_u H_{L^*} \phi_u)_\limminus \leq \Tr(\phi_u H_\U \phi_u)_\limminus \, .
\end{align}

Since all the previous arguments hold for any $u\in\Omega_g$ we infer that
\begin{equation}\label{eq:SqueezeCones}
  \begin{aligned}
    \Bigl|\int_{\Omega_g}\Tr(\phi_u H_\Omega \phi_u)_\limminus &l(u)^{-d}\, du - \int_{\Omega_g}\Tr(\phi_u H_{L^*(u)} \phi_u)_\limminus l(u)^{-d}\, du\Bigr|\\ 
    &\leq 
    \int_{\Omega_g}\bigl[\Tr(\phi_u H_{\U(u)} \phi_u)_\limminus-\Tr(\phi_u H_{\I(u)} \phi_u)_\limminus\bigr] l(u)^{-d}\, du \, .
  \end{aligned}
\end{equation}

A technical point here is that the choice of the point $p(u)$ and the half space $L^*(u)$ can be made so that it depends in a measurable way on $u$. The fact that this is possible can be seen by constructing the map $u \mapsto p(u)$ in the following manner. Take a countable dense subset $S$ in $G_{\eps, r}$. The continuity of the map $p \mapsto \Gamma_{\eps, r}(p)$ implies that $\G_{\eps, r}= \cup_{p\in S}\,\Gamma_{\eps, r}(p)$. Choose an ordering of $S$ and define the $u \mapsto p(u)$ by mapping $u$ to the point $p\in S$ which appears first in this ordering. The inverse image of any measurable subset of $\partial\Omega$ is then a countable union of intersections of the sets $\Gamma_{\eps, r}$ which is measurable. The map $u \mapsto L^*(u)$ can be constructed in a similar manner.

We will argue that the second term on the left side of \eqref{eq:SqueezeCones} contains the relevant terms in the asymptotics. In fact, by Lemma~2.3 in~\cite{FrankGeisinger_11} (the case $\omega\equiv 0$ of Lemma~\ref{lem:LocalAsympGraph} above but valid for all $h>0$) it holds that
\begin{equation}
  \Tr(\phi_u H_{L^*}\phi_u)_\limminus = L_d h^{-d}\int_{L^*}\phi_u^2(x)\, dx- \frac{L_{d-1}}{4}h^{-d+1}\int_{\partial L^*}\phi_u^2(x)\, d\Haus^{d-1}(x) + l(u)^{d-2}O(h^{-d+2}) \, .
\end{equation}
Integrating these asymptotics we obtain
\begin{equation}\label{eq:omegagoodhalfspace}
\begin{aligned}
\int_{\Omega_g}\Tr(\phi_u H_{L^*(u)} \phi_u)_\limminus l(u)^{-d}\, du & = L_d h^{-d}\int_{\Omega_g} \int_{L^*(u)}\phi_u^2(x)l(u)^{-d} \, dx\, du \\
& \quad - \frac{L_{d-1}}{4}h^{-d+1} \int_{\Omega_g} \int_{\partial L^*(u)}\phi_u^2(x)l(u)^{-d} \, d\Haus^{d-1}(x)\, du \\
& \quad  + \int_{\Omega_g} l(u)^{-2}\, du\ O(h^{-d+2}) \, .
\end{aligned}
\end{equation}
The first two terms on the right side are almost the terms that we are looking for, namely, 
\begin{equation}
\label{eq:omegagoodhalfspacewanted}
L_d h^{-d}\int_{\Omega_g} \int_{\Omega}\phi_u^2(x)l(u)^{-d} \, dx\, du - \frac{L_{d-1}}{4}h^{-d+1} \Haus^{d-1}(\partial\Omega) \, .
\end{equation}
Note that in the first term on the right side of \eqref{eq:omegagoodhalfspace} we want to replace the domain $L^*(u)$ of the $u$-integration by $\Omega$. Similarly, in the second term we essentially want to replace $\partial L^*(u)$ by $\partial\Omega$ (although eventually we will argue slightly differently). The last term on the right side of \eqref{eq:omegagoodhalfspace} is controlled by~\eqref{eq:errorint_boundary}. 

Thus, in the remainder of this subsection we need to do two things, namely first to control the error between the right side of \eqref{eq:omegagoodhalfspace} and \eqref{eq:omegagoodhalfspacewanted}, and second to bound the term on the right side of \eqref{eq:SqueezeCones}.

\subsubsection{The volume terms}

First we show that the difference between the first term on the right side of \eqref{eq:omegagoodhalfspace} and the first term in \eqref{eq:omegagoodhalfspacewanted} is small. We bound
\begin{align}
  \int_{\Omega_g}\Bigl|\int_\Omega \phi_u^2(x)\, dx - \int_{L^*(u)}\phi_u^2(x)\, dx\Bigr|l(u)^{-d}\, du 
  & \leq
  \int_{\Omega_g}\int_{\Omega \Delta L^*(u)} \phi_u^2(x) l(u)^{-d}\, dx\, du\\ \label{eq:SymmetricDiff}
  & \leq 
  \int_{\Omega_g}\int_{\U(p)\setminus \I(p)} \phi_u^2(x)l(u)^{-d}\, dx\, du\\ 
  &\leq
  C \int_{\Omega_g}|(\U(p)\setminus\I(p))\cap \supp\phi_u|l(u)^{-d}\, du \, .
\end{align}

For $u\in\Omega_g$ we have $l(u)\geq \dist(u, \partial\Omega)$. By Lemma~\ref{lem:vertex_is_close} we find $|u-p(u)|\leq 2l(u)$ and hence
\begin{equation}\label{eq:vertex_is_close}
  (\U(p)\setminus\I(p))\cap B_{l(u)}(u)\subset (\U(p)\setminus \I(p))\cap B_{3l(u)}(p(u)) \, , 
\end{equation}
which in turn implies that
\begin{equation}
  |(\U(p)\setminus\I(p))\cap B_{l(u)}(u)|\leq |(\U(p)\setminus \I(p))\cap B_{3l(u)}(p(u))| \leq C \eps l(u)^d \, .
\end{equation}
Inserting this bound into~\eqref{eq:SymmetricDiff} and recalling \eqref{eq:volume_boundary_region} yields
\begin{align}
  h^{-d}\int_{\Omega_g}|(\U(p)\setminus\I(p))\cap \supp\phi_u|l(u)^{-d}\, du& \leq 
  C h^{-d} \eps |\Omega_g| \\
  &\leq C h^{-d}\eps\pps l_0 \Haus^{d-1}(\partial\Omega)(1+\overline\Emink(l_0))\\
  & = 
  C\eps \eps_0^{-1}h^{-d+1}\Haus^{d-1}(\partial\Omega)(1+\overline\Emink(l_0))\, .
\end{align}
Note that this term is negligible for the asymptotics if $\eps \eps_0^{-1}\ll 1$.


\subsubsection{The boundary terms}

Next, we consider the difference between the second term on the right side of \eqref{eq:omegagoodhalfspace} and the second term in \eqref{eq:omegagoodhalfspacewanted}. We shall show that
\begin{equation}\label{eq:boundary_int_error}
\biggl| \int_{\Omega_g} \int_{\partial L^*(u)}\!\phi_u^2(x)l(u)^{-d} d\Haus^{d-1}(x)\, du - \Haus^{d-1}(\partial\Omega) \biggr|  \leq C \Haus^{d-1}(\partial\Omega) \bigl( \mu(\eps, r) + \overline\Emink(l_0)+\eps^2 \bigr)\, .
\end{equation}
Note that the right side is negligible for the asymptotics if $\mu(\eps, r) + \overline\Emink(l_0)+\eps^2\ll 1$. This is a weaker requirement than the one we met in \eqref{eq:Contribution_badregion}.

Let $u\in\Omega_g$. We know from \eqref{eq:sets} that $l(u)= l_0$ and therefore $\phi_u(x)=\phi((x-u)/l_0)$. 

We define
$$
f(x_d) = \int_{\R^{d-1}} \phi(x', x_d)^2 \, dx' \, .
$$
Let $y\in\partial L^*(u)$ such that $|u-y|=\dist(u, \partial L^*(u))=\dist(u, \partial\Omega)$. Then $\partial L^*(u)=\{ x\in\R^d:\ (x-y)\cdot(u-y)=0\}$ and
\begin{align}
\int_{\partial L^*(u)} \phi_u(x)^2\, d\Haus^{d-1}(x) & = \int_{\partial L^*(u)} \phi\Bigl( \frac{x-y}{l_0} - \frac{u-y}{l_0} \Bigr)^2\, d\Haus^{d-1}(x) \\
& = l_0^{d-1} f(|u-y|/l_0) \, .
\end{align}
The last equality follows by scaling and from the fact that $\phi$ is radial. Since $f$ is even, we can write
$$
f(|u-y|/l_0) = f(\delta_\Omega(u)/l_0) \, .
$$
This proves that
$$
\int_{\Omega_g} \int_{\partial L^*(u)}\phi_u^2(x)l(u)^{-d} \, d\Haus^{d-1}(x)\, du = l_0^{-1} \int_{\Omega_g} f(\delta_\Omega(u)/l_0) \, du \, .
$$

Next, we show that, up to a controllable error, the set $\Omega_g$ on the right side can be replaced by $\R^d$. Indeed, we have
\begin{equation}\label{eq:Adding_bad_boundary}
\begin{aligned}
0& \leq l_0^{-1} \int_{\Omega_b} f(\delta_\Omega(u)/l_0) \, du \leq l_0^{-1} \|f\|_{L^\infty} |\Omega_b| \\
& \leq C \Haus^{d-1}(\partial\Omega)\bigl(\mu(\eps, r)+\overline\Emink(l_0) + \eps^2\bigr) \, , 
\end{aligned}
\end{equation}
where we used the same bound as in~\eqref{eq:Contribution_badregion}. Moreover, since $\phi$ has support in $\overline{B_1(0)}$, $f$ has support in $[-1, 1]$ and therefore \eqref{eq:sets} implies that $f(\delta_\Omega(u)/l_0)=0$ for $u\notin\Omega_g\cup\Omega_b$.

Thus, we are left with analysing
$$
l_0^{-1} \int_{\R^d} f(\signdist_\Omega(u)/l_0) \, du
= l_0^{-1} \int_\R f(t/l_0) \Haus^{d-1}(\{u\in\R^d:\ \signdist_\Omega(u)=t\})\, dt \, .
$$
The identity here comes again from the co-area formula together with \eqref{eq:Eikonal}.

The idea in the following is that $l_0^{-1} f(t/l_0)$ is an approximate delta function at $t=0$. Note that
$$
\int_\R f(x_d)\, dx_d = \|\phi\|_{L^2}^2 = 1 \, .
$$
The following argument is a quantitative, `two-sided' version of a special case of \cite[Proposition 1.1]{Brown_93}. To justify the replacement of $l_0^{-1} f(t/l_0)$ by a delta function write
\begin{align}
& l_0^{-1} \int_0^\infty f(t/l_0) \Haus^{d-1}(\{u\in\R^d:\ \signdist_\Omega(u)=t\})\, dt - (1/2) \Haus^{d-1}(\partial\Omega) \\
& \quad = l_0^{-1} \int_0^\infty f(t/l_0) \frac{d}{dt} \Bigl( |\{ u\in\Omega:\ \signdist_\Omega(u)\leq t\}| - \Haus^{d-1}(\partial\Omega) t \Bigr) dt \\
& \quad = l_0^{-2} \int_0^\infty f'(t/l_0) \Bigl( |\{ u\in\Omega:\ \signdist_\Omega(u)\leq t\}| - \Haus^{d-1}(\partial\Omega) t \Bigr) dt \\
& \quad = l_0^{-2}  \Haus^{d-1}(\partial\Omega) \int_0^\infty f'(t/l_0)\, t\, \Emink_{inner}(t)\, dt \, . 
\end{align}
This, together with a similar formula for $t<0$ and the fact that $f$ is supported in $[-1, 1]$, implies that
\begin{align}
\biggl| &l_0^{-1} \int_\R f(t/l_0) \Haus^{d-1}(\{u\in\R^d:\ \signdist_\Omega(u)=t\})\, dt - \Haus^{d-1}(\partial\Omega) \biggr| \\
& \quad \leq 2 l_0^{-2} \Haus^{d-1}(\partial\Omega) \overline\Emink(l_0) \int_0^\infty |f'(t/l_0)|\, t \, dt \\
& \quad = 2 \Haus^{d-1}(\partial\Omega) \overline\Emink(l_0) \int_0^\infty |f'(x_d)|\, x_d \, dx_d \, .
\end{align}
This completes the proof of \eqref{eq:boundary_int_error}.


\subsubsection{Estimating the error from~\eqref{eq:SqueezeCones}} 

To complete the proof, it remains to control the error made in our local approximation of $B_{l(u)}(u)\cap \Omega$ by $B_{l(u)}(u)\cap L^*(u)$, that is, the right side of~\eqref{eq:SqueezeCones}. We shall show that
\begin{align}
& \int_{\Omega_g}\bigl[\Tr(\phi_u H_{\U(u)} \phi_u)_\limminus-\Tr(\phi_u H_{\I(u)} \phi_u)_\limminus\bigr] l(u)^{-d}\, du \\
& \qquad \leq C \Haus^{d-1}(\partial\Omega)(1+\overline\Emink(l_0))\bigl( \eps \eps_0^{-1} +\eps_0^{1/3} \bigr) h^{-d+1}\, .
\end{align}
Note that in order to show that this term does not interfere with the asymptotics we need to make $\eps\eps_0^{-1}+\eps_0^{1/3}$ small.

Plugging in the asymptotics of Lemma~\ref{lem:LocalAsympCone} we find that
\begin{align}
  \int_{\Omega_g}\bigl[\Tr(\phi_u &H_{\U(u)} \phi_u)_\limminus-\Tr(\phi_u H_{\I(u)} \phi_u)_\limminus\bigr] l(u)^{-d}\, du\\
  &\leq
    L_d h^{-d}\int_{\Omega_g}\int_{\U(p)\setminus\I(p)}\phi^2_u(x)l(u)^{-d}\, dx\, du\\
  &\quad
    - \frac{L_{d-1}}{4}h^{-d+1}\!\int_{\Omega_g}\!\biggl(\int_{\partial \U(p)}\phi^2_u(x)\, d\Haus^{d-1}(x)-\int_{\partial \I(p)}\phi^2_u(x)\, d\Haus^{d-1}(x)\biggr)l(u)^{-d}\, du\\
  &\quad
    + C h^{-d+4/3}\int_{\Omega_g}l(u)^{-4/3}\, du \, .
\end{align}
The first term can be handled as in~\eqref{eq:SymmetricDiff} and is thus $\leq C \Haus^{d-1}(\partial\Omega)(1+\overline\Emink(l_0))\eps \eps_0^{-1} h^{-d+1}$. The third term is $\leq C \Haus^{d-1}(\partial\Omega)(1+\overline\Emink(l_0))\eps_0^{1/3} h^{-d+1}$ by~\eqref{eq:errorint_boundary} and the choice of $l_0$. 

In order to bound the second term, let $H$ denote the hyperplane through $p(u)$ orthogonal to $\nu(p(u))$. Then the map $s\colon \R^{d-1} \to \R$, $x' \mapsto \frac{\eps}{\sqrt{1-\eps^2}}|x'|$, parametrizes $\partial\U$ and $\partial\I$ as graphs over $H$. In coordinates chosen so that $p(u)=0$ and $H=\{(x', 0):\ x'\in \R^{d-1}\}$, we find that
\begin{align}
  \Bigl|\int_{\partial \U}\phi^2_u(x)\, d\Haus^{d-1}(x)-\int_{\partial \I}&\phi^2_u(x)\, d\Haus^{d-1}(x)\Bigr|\\
  &\leq
  \int_{\R^{d-1}} |\phi_u^2(x', s(x'))-\phi_u^2(x', -s(x'))|\sqrt{1+|\nabla s|^2}dx'\\
  &\leq
  \frac{4\eps}{1-\eps^2} \|\phi_u\|_{L^\infty} \|\nabla\phi_u\|_{L^\infty} \int_{B_{3l(u)}} |x'|\, dx' \\
  &\leq
  \frac{C\eps}{\sqrt{1-\eps^2}}l(u)^{d-1} \, , 
\end{align}
where we used $|x'|\leq 3l(u)$ in $\supp \phi_u$, see~\eqref{eq:vertex_is_close}.
Combined with~\eqref{eq:volume_boundary_region} we find that the error coming from the second term of~\eqref{eq:errorint_boundary} is $\leq C \Haus^{d-1}(\partial\Omega)(1+\overline\Emink(l_0)) \eps h^{-d+1}$.

\subsection{Gathering the error terms} 

The proof of Theorem~\ref{thm:MainTheorem_alt} can now be completed by combining the contributions from $\Omega_*, \Omega_b, \Omega_g$ and estimating the localization error from Lemma~\ref{lem:localization}. Note that \eqref{eq:phi_properties1} implies that
$$
\int_{\Omega_*} \int_\Omega \phi_u^2(x)l(u)^{-d}\, dx\, du +
\int_{\Omega_g} \int_\Omega \phi_u^2(x)l(u)^{-d}\, dx\, du +
\int_{\Omega_b} \int_\Omega \phi_u^2(x)l(u)^{-d}\, dx\, du =
|\Omega| \, .
$$
For all $0<h\leq 2 r_{in}(\Omega)$, $r>0$, $\eps\in(0, 1/2]$ and $\eps_0\in (0, 4]$ satisfying
$$
h/\eps_0=l_0\leq \min\bigl\{r/2, s_0, r_{in}(\Omega)/2\bigr\}
$$
(with $s_0=s_0(\eps, r, \Omega)$ given by Lemma~\ref{lem:GoodSetIsBig}) we can conclude that
\begin{equation}\label{eq:mainineq}
\begin{aligned}
&   h^{-d+1}\Bigl|\Tr(H_\Omega)_\limminus - L_d |\Omega|h^{-d}+ \frac{L_{d-1}}{4}\Haus^{d-1}(\partial\Omega)h^{-d+1}\Bigr| \\
& \quad \leq
  C \Haus^{d-1}(\partial\Omega)\biggl[\eps_0 \bigl[1+ \overline\Emink(r_{in}(\Omega))\bigr] + \frac{\mu(\eps, r)+\overline\Emink(l_0)}{\eps_0}
  + \bigl(\eps_0^{-1}\eps + \eps_0^{1/3}\bigr) \bigl[1+\overline\Emink(l_0)\bigr] \biggr]\, , 
\end{aligned}
\end{equation}
where the constant $C$ depends only on the dimension. (Here we have simplified some terms using the fact that $\eps\leq 1/2$ and $\eps_0\leq 4$.) This proves~\eqref{eq:asymptotic_ineq} and therefore concludes the proof of Theorem~\ref{thm:MainTheorem_alt}.
\qed


\section{Uniform asymptotics for convex sets}
\label{sec:uniformity_convex}

Our goal in this section is to prove the following

\begin{theorem}\label{thm:asymptotic_ineq_convex_alt}
  Let $\Omega\subset \R^d$, $d\geq 2$, be a convex bounded open set. Then, for all $h>0$, 
  \begin{equation}
    h^{d-1}\Bigl| \Tr(H_\Omega)_\limminus - L_d |\Omega|h^{-d}+ \frac{L_{d-1}}{4}\Haus^{d-1}(\partial\Omega)h^{-d+1}\Bigr| 
    \leq
    C \Haus^{d-1}(\partial\Omega)\Bigl(\tfrac{h}{r_{in}(\Omega)}\Bigr)^{1/11}\, , 
  \end{equation}
  where the constant $C$ depends only on the dimension.
\end{theorem}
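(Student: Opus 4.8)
The plan is to combine the quantitative inequality \eqref{eq:asymptotic_ineq} from Section~\ref{sec:MainProof} with convexity-specific, $\Omega$-uniform bounds on the quantities $\overline\Emink$ and $\mu(\eps,r)$ appearing on its right-hand side, and then to optimize over the three free parameters $\eps_0,\eps,r$ as explicit powers of $h/r_{in}(\Omega)$.

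First I would dispose of the easy range $h\ge c\,r_{in}(\Omega)$ for a suitable dimensional constant $c$. Here Lemma~\ref{lem:BerezinLiebLiYau} gives $0\le\Tr(H_\Omega)_\limminus\le L_d|\Omega|h^{-d}$, so the bracket on the left-hand side is at most $L_d|\Omega|h^{-d}+\tfrac{L_{d-1}}4\Haus^{d-1}(\partial\Omega)h^{-d+1}$, and after multiplication by $h^{d-1}$ it is $\le L_d|\Omega|h^{-1}+\tfrac{L_{d-1}}4\Haus^{d-1}(\partial\Omega)$. For convex $\Omega$ one has $|\Omega|\le r_{in}(\Omega)\Haus^{d-1}(\partial\Omega)$ — equivalently $\Emink_{inner}\le0$, which follows from the monotonicity of perimeter for nested convex sets, cf.\ \eqref{eq:inradius_bound} — so this is $\le C\Haus^{d-1}(\partial\Omega)\le C\,c^{-1/11}\Haus^{d-1}(\partial\Omega)\bigl(h/r_{in}(\Omega)\bigr)^{1/11}$, as desired.

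For $0<h\le c\,r_{in}(\Omega)$ I would rerun the argument of Section~\ref{sec:MainProof}, with the non-effective ingredients there replaced by explicit convex estimates; these are Lemmas~\ref{lem:boundary_region_convex} and~\ref{lem:explicit_G_convex}. Concretely: (i) $\overline\Emink(t)\le C\,t/r_{in}(\Omega)$ for $t\le r_{in}(\Omega)$, which follows from the Steiner formula for the outer parallel volume together with bounds on the quermassintegrals $W_k(\Omega)$ in terms of $r_{in}(\Omega)$ and $\Haus^{d-1}(\partial\Omega)$ (using $\Omega\supset B_{r_{in}(\Omega)}$), plus $\Emink_{inner}\le0$; (ii) a bound of the form $\mu(\eps,r)\le C\,r/(\eps\,r_{in}(\Omega))$, obtained by a covering argument estimating the $\Haus^{d-1}$-measure of boundary points failing the $(\eps,r)$-good cone condition (for a convex body these are confined to a neighborhood of the non-smooth part of $\partial\Omega$, and convexity — supporting hyperplanes and monotonicity of the Gauss map — controls their weight at scale $r$); and (iii) an effective version of Lemma~\ref{lem:GoodSetIsBig} in which the $\Omega$-dependent threshold $s_0$ is replaced by a fixed multiple of $r$, which is possible because for convex $\Omega$ the injectivity and covering steps in its proof can be arranged globally. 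Granting (i)--(iii) and keeping $\eps_0,\eps,r$ free but linked by $r=K\,(h/\eps_0)$ for a dimensional $K\ge2$, the estimates of Section~\ref{sec:MainProof} go through to give, for all $0<h\le c\,r_{in}(\Omega)$,
\begin{equation*}
  h^{d-1}\Bigl|\Tr(H_\Omega)_\limminus-L_d|\Omega|h^{-d}+\tfrac{L_{d-1}}4\Haus^{d-1}(\partial\Omega)h^{-d+1}\Bigr|
  \le C\,\Haus^{d-1}(\partial\Omega)\Bigl(\eps_0^{1/3}+\frac{\eps}{\eps_0}+\frac{h}{\eps_0^2\,r_{in}(\Omega)}+\frac{r}{\eps\,\eps_0\,r_{in}(\Omega)}\Bigr).
\end{equation*}
Writing $\tau=h/r_{in}(\Omega)$ and substituting $r=Kh/\eps_0$, the parenthesis is $\lesssim\eps_0^{1/3}+\eps/\eps_0+\tau/\eps_0^2+\tau/(\eps\,\eps_0^2)$, and the choice $\eps_0=\tau^{3/11}$, $\eps=\tau^{4/11}$ (so $r=K\,r_{in}(\Omega)\,\tau^{8/11}$) makes the first, second and fourth terms all equal to $\tau^{1/11}$, while $\tau/\eps_0^2=\tau^{5/11}$ is smaller; all side constraints ($\eps\le\tfrac12$, $\eps_0\le4$, $l_0\le r/2$, and $r$ small enough for the convex lemmas) hold once $c$ is a small enough dimensional constant. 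This yields the claimed bound $C\Haus^{d-1}(\partial\Omega)\,\tau^{1/11}$.

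I expect the only genuinely new input to be Lemma~\ref{lem:explicit_G_convex} — the quantitative control $\mu(\eps,r)\lesssim r/(\eps\,r_{in})$ of the bad part of the boundary of a convex body — together with the effective form of Lemma~\ref{lem:GoodSetIsBig}; everything else is bookkeeping through Section~\ref{sec:MainProof} and the elementary optimization above. It should be stressed that the exponent $1/11$ is merely what this optimization delivers and is surely not optimal.
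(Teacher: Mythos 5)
Your reduction, your treatment of the regime $h\gtrsim r_{in}(\Omega)$, and your final optimization (the exponents $\eps_0\sim\tau^{3/11}$, $\eps\sim\tau^{4/11}$, $r\sim r_{in}(\Omega)\tau^{8/11}$ giving $\tau^{1/11}$) coincide with what the paper does. The gap is in how you make the bad boundary region effective: your items (ii) and (iii) are precisely the two points the paper singles out as the difficulty (``it is not so easy to bound $\mu(\eps,r)$ and $s_0(\eps,r,\Omega)$ explicitly, even for convex sets''), and you offer only assertions for them. For (ii), the claim that the non-$(\eps,r)$-good points are ``confined to a neighborhood of the non-smooth part of $\partial\Omega$'' is not correct: badness at scale $r$ is a quantitative flatness condition, and a perfectly smooth convex body can have a large set of bad points if its curvature is large at scale $r/\eps$; the estimate $\mu(\eps,r)\lesssim r/(\eps\,r_{in})$ may well be true, but ``a covering argument \ldots monotonicity of the Gauss map'' is not an argument. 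For (iii), the obstruction is concrete: the threshold $s_0$ in Lemma~\ref{lem:GoodSetIsBig} comes from the separation $\min_{i\neq j}\dist(F_i,F_j)$ of the pieces $F_i\subset G_{\eps,r}$ on which the normal is nearly constant, and nothing in convexity lets you ``arrange the injectivity and covering steps globally'' for that construction. The paper avoids both issues by a different mechanism (Lemma~\ref{lem:explicit_G_convex}, which you misread as a bound on $\mu$): it never bounds $\mu(\eps,r)$ at all, but bounds the \emph{volume} $|\{u:\dist(u,\partial\Omega)<s\}\setminus\G_{\eps,r}|$ directly, by flowing the inner parallel body $\Omega_{r/\eps}$ outward along its normals; the image points on $\partial\Omega$ are automatically $(\eps,r)$-good, injectivity of this normal flow is free by convexity of $\Omega_{r/\eps}$ (so no analogue of $s_0$ appears), and the perimeter bound $\Haus^{d-1}(\partial\Omega_{t})\geq(1-t/r_{in})^{d-1}\Haus^{d-1}(\partial\Omega)$ shows the good tube fills all but a fraction $\sim r/(\eps\,r_{in})$ of the $s$-neighborhood. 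That construction is the genuinely new input, and it is exactly what is missing from your proposal.

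A secondary, smaller gap: in (i) you justify only the outer estimate and the one-sided inequality $\Emink_{inner}\leq 0$, but the proof of \eqref{eq:asymptotic_ineq} (in the delta-function approximation for the boundary term) needs the two-sided bound $|\Emink_{inner}(t)|\leq C\,t/r_{in}(\Omega)$, i.e.\ also a \emph{lower} bound on the inner tube volume. This is where Lemma~\ref{lem:boundary_region_convex} uses the lower bound on $\Haus^{d-1}(\partial\Omega_t)$ quoted above (it follows, e.g., from the inclusion of a translate of $(1-t/r_{in}(\Omega))\Omega$ into $\Omega_t$ together with monotonicity of perimeter); your Steiner/quermassintegral sketch does not produce it. With Lemmas~\ref{lem:boundary_region_convex} and~\ref{lem:explicit_G_convex} granted, the rest of your bookkeeping and the parameter choice are fine and agree with the paper.
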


Clearly, this is equivalent to Theorem \ref{thm:asymptotic_ineq_convex}. To prove Theorem~\ref{thm:asymptotic_ineq_convex_alt} we follow the same strategy as in the proof of Theorem~\ref{thm:MainTheorem_alt}. The geometry of $\Omega$ enters into the final inequality~\eqref{eq:mainineq} in that proof via the three quantities $\overline\Emink(l_0)$, $\mu(\eps, r)$ and $s_0(\eps, r, \Omega)$ (the latter as a constraint on the size of $h$).

Our first goal in this section is to show that $\overline \Emink(\Omega, t)$ can be bounded for convex $\Omega$ through~$t/r_{in}(\Omega)$ only. This makes the geometric dependence of the term $\overline\Emink(l_0)$ in \eqref{eq:mainineq} explicit.

It is not so easy to bound $\mu(\eps, r)$ and $s_0(\eps, r, \Omega)$ explicitly, even for convex sets. Our second goal in this section is therefore to prove a replacement of Lemma \ref{lem:GoodSetIsBig} for convex sets where the geometry enters only through $r_{in}(\Omega)$ and $\Haus^{d-1}(\partial\Omega)$.

Having achieved these two goals, a straightforward modification of the proof of Theorem~\ref{thm:MainTheorem_alt} will prove Theorem \ref{thm:asymptotic_ineq_convex_alt}.

Throughout this section we assume that $\Omega\subset\R^d$ is a convex open set. The arguments that follow are based on ideas related to the notion of inner parallel sets. The inner parallel set of $\Omega$ at distance $t$ is defined to be
\begin{equation}\label{eq:innerparallel}
  \Omega_t=\{u\in \Omega: \dist(u, \Omega^c)>t\} \, .
\end{equation}
By~\cite[Theorem~1.2]{Larson_JFA_16} and monotonicity of the measure of the perimeter of convex bodies under inclusions we know that 
\begin{equation}\label{eq:bound_perim_innerparallel}
  \Haus^{d-1}(\partial\Omega)\Bigl(1-\frac{t}{r_{in}(\Omega)}\Bigr)_\limplus^{d-1}\leq \Haus^{d-1}(\partial\Omega_t)\leq \Haus^{d-1}(\partial\Omega)
  \qquad\text{for all}\ t\geq 0 \, .
\end{equation}

Our first application of~\eqref{eq:bound_perim_innerparallel} will be to show that, as claimed above, one has two-sided bounds for $r_{in}(\Omega)$ in terms of $|\Omega|$ and $\Haus^{d-1}(\partial\Omega)$. Indeed, by the co-area formula and~\eqref{eq:Eikonal} one has
\begin{equation}
  |\Omega|= \int_0^{r_{in}(\Omega)}\Haus^{d-1}(\partial\Omega_s)\, ds \, .
\end{equation}
Applying~\eqref{eq:bound_perim_innerparallel} and integrating we find that 
\begin{equation}\label{eq:inradius_bound}
  \frac{|\Omega|}{\Haus^{d-1}(\partial\Omega)}\leq r_{in}(\Omega)\leq  \frac{d|\Omega|}{\Haus^{d-1}(\partial\Omega)} \, .
\end{equation}

\begin{remark}
  It might be worth noting that both bounds in \eqref{eq:inradius_bound} cannot be improved. In the upper bound equality is achieved if $\Omega$ is a ball and, more generally, if and only if $\Omega$ is a form body (see~\cite{Larson_JFA_16, Schneider_14}). In the lower bound equality is asymptotically achieved by $(0, L)^{d-1}\times(0, 1)$ in the limit $L\to \infty$.
\end{remark}

The following lemma achieves the first goal mentioned at the beginning of this section.

\begin{lemma}\label{lem:boundary_region_convex}
Let $\Omega\subset\R^d$ be a convex open set. Then for all $0\leq t\leq r_{in}(\Omega)$, 
  \begin{align}\label{eq:bdry_region_bound_convex}
    |\Emink_{inner}(\Omega, t)|\leq C\frac{t}{r_{in}(\Omega)} \, , 
    \quad |\Emink_{outer}(\Omega, t)|\leq C \frac{t}{r_{in}(\Omega)} \, , 
    \quad \overline \Emink(\Omega, t)\leq C \frac{t}{r_{in}(\Omega)} \, , 
  \end{align}
  where the constants depend only on the dimension.
\end{lemma}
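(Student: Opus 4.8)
The plan is to reduce all three estimates to a single mechanism: writing the volume of a parallel slab around $\partial\Omega$ as an integral of perimeters of parallel sets and then comparing those perimeters with $\Haus^{d-1}(\partial\Omega)$. For the inner estimate I would use the co-area formula and \eqref{eq:Eikonal} to write, for $0\le t\le r_{in}(\Omega)$,
\[
  |\{u\in\Omega:\dist(u,\partial\Omega)<t\}|=\int_0^t \Haus^{d-1}(\partial\Omega_s)\,ds ,
\]
with $\Omega_s$ the inner parallel set from \eqref{eq:innerparallel}, so that $\Emink_{inner}(\Omega,t)=\tfrac1t\int_0^t(\Haus^{d-1}(\partial\Omega_s)/\Haus^{d-1}(\partial\Omega)-1)\,ds$. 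By \eqref{eq:bound_perim_innerparallel} the integrand lies in $[(1-s/r_{in}(\Omega))^{d-1}-1,\,0]$ for $s\le r_{in}(\Omega)$, and since $1-(1-x)^{d-1}\le(d-1)x$ on $[0,1]$ its absolute value is at most $(d-1)s/r_{in}(\Omega)$; integrating over $s\in(0,t)$ gives $|\Emink_{inner}(\Omega,t)|\le \tfrac{d-1}{2}\,t/r_{in}(\Omega)$.

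For the outer estimate I would proceed the same way with the outer parallel sets $\Omega^s:=\{u\in\R^d:\dist(u,\Omega)<s\}$. Using that for $u\notin\overline\Omega$ the nearest point of $\overline\Omega$ lies on $\partial\Omega$ (so that $\dist(u,\partial\Omega)=\dist(u,\Omega)$ off $\overline\Omega$), the co-area formula gives $\Emink_{outer}(\Omega,t)=\tfrac1t\int_0^t(\Haus^{d-1}(\partial\Omega^s)/\Haus^{d-1}(\partial\Omega)-1)\,ds$. Monotonicity of the perimeter of convex bodies under inclusion yields $\Haus^{d-1}(\partial\Omega^s)\ge\Haus^{d-1}(\partial\Omega)$, hence $\Emink_{outer}(\Omega,t)\ge0$. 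For the matching upper bound I would observe that if $B_{r_{in}(\Omega)}(x_0)\subset\Omega$ then $sB_1(0)=\tfrac{s}{r_{in}(\Omega)}(B_{r_{in}(\Omega)}(x_0)-x_0)\subset\tfrac{s}{r_{in}(\Omega)}(\Omega-x_0)$, whence by convexity of $\Omega$
\[
  \Omega^s=\Omega+sB_1(0)\subset\Omega+\tfrac{s}{r_{in}(\Omega)}(\Omega-x_0)=\Bigl(1+\tfrac{s}{r_{in}(\Omega)}\Bigr)\Omega-\tfrac{s}{r_{in}(\Omega)}\,x_0 ,
\]
a dilate of $\Omega$ by the factor $1+s/r_{in}(\Omega)$. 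Monotonicity of perimeter again gives $\Haus^{d-1}(\partial\Omega^s)\le(1+s/r_{in}(\Omega))^{d-1}\Haus^{d-1}(\partial\Omega)$, and since $(1+x)^{d-1}-1\le(2^{d-1}-1)x$ on $[0,1]$ we obtain $0\le\Emink_{outer}(\Omega,t)\le \tfrac{2^{d-1}-1}{2}\,t/r_{in}(\Omega)$.

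Finally, the bound on $\overline\Emink$ follows at once from \eqref{eq:defemink}: $\overline\Emink(\Omega,t)\le\tfrac12\sup_{t_1\le t}|\Emink_{inner}(\Omega,t_1)|+\tfrac12\sup_{t_2\le t}|\Emink_{outer}(\Omega,t_2)|$, and since $t_1,t_2\le t\le r_{in}(\Omega)$ the two estimates just obtained apply. The steps that are not routine bookkeeping are, on the technical side, the $\Haus^{d-1}$-negligibility of the level sets $\{\dist(\cdot,\partial\Omega)=s\}$ — needed to apply the co-area formula and to pass freely between strict and non-strict inequalities — together with the two elementary scalar inequalities used above. The one genuinely non-formal step, which I expect to be the main obstacle, is the upper bound on $\Haus^{d-1}(\partial\Omega^s)$: \eqref{eq:bound_perim_innerparallel} is quoted only for \emph{inner} parallel sets, so the dilation inclusion $\Omega^s\subset(1+s/r_{in}(\Omega))\Omega-\tfrac{s}{r_{in}(\Omega)}x_0$ is what must play the role of a Steiner-type surface-area expansion for the outer parallel body.
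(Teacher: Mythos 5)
Your proof is correct, and its skeleton is the same as the paper's: both reduce the inner and outer estimates to the co-area identity (via \eqref{eq:Eikonal}) expressing the volume of the parallel slab as $\int_0^t \Haus^{d-1}$ of the parallel surfaces, then compare those surface measures with $\Haus^{d-1}(\partial\Omega)$; your inner bound, using \eqref{eq:bound_perim_innerparallel} and $1-(1-x)^{d-1}\le (d-1)x$, is essentially verbatim the paper's computation, and the bound on $\overline\Emink$ is the same trivial combination. The one place you genuinely diverge is the step you yourself single out, the upper bound on the outer perimeter. The paper gets it by a small trick: the outer parallel body $\Omega^t=\{u:\dist(u,\Omega)<t\}$ is convex, its inner parallel set at distance $t$ is exactly $\Omega$, and its inradius is $r_{in}(\Omega)+t$, so \eqref{eq:bound_perim_innerparallel} applied to $\Omega^t$ gives $\Haus^{d-1}(\partial\Omega)\ge \Haus^{d-1}(\partial\Omega^t)\bigl(r_{in}(\Omega)/(r_{in}(\Omega)+t)\bigr)^{d-1}$. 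You instead use the inscribed ball to get $B_s(0)\subset \frac{s}{r_{in}(\Omega)}(\Omega-x_0)$ and hence, by convexity, $\Omega^s\subset \bigl(1+\frac{s}{r_{in}(\Omega)}\bigr)\Omega-\frac{s}{r_{in}(\Omega)}x_0$, and then invoke monotonicity of surface area under inclusion of convex bodies plus scaling. Both routes yield the identical factor $(1+s/r_{in}(\Omega))^{d-1}$, so the resulting constants differ only cosmetically; the paper's route leans once more on the quoted theorem from \cite{Larson_JFA_16}, while yours replaces that application by an elementary Minkowski-sum inclusion and uses only perimeter monotonicity, which is a perfectly sound (arguably more self-contained) alternative. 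The technical points you flag (co-area applicability, negligibility of level sets, implicit boundedness of $\Omega$) are treated at the same level of detail in the paper, so they are not gaps relative to it.
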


\begin{proof}[Proof of Lemma~\ref{lem:boundary_region_convex}]
  We first bound the measure of $\{u\in \Omega: \dist(u, \Omega^c)<t\}$ from both above and below. Using the co-area formula and~\eqref{eq:Eikonal} in the same manner as above we have that, for $0\leq t\leq r_{in}(\Omega)$, 
  \begin{equation}
    |\{u\in \Omega: \dist(u, \Omega^c)<t\}|=\int_0^t \Haus^{d-1}(\partial\Omega_s)\, ds \, .
  \end{equation}
  By the upper bound in~\eqref{eq:bound_perim_innerparallel} it follows that, for $t\geq 0$, 
  \begin{equation}
    |\{u\in \Omega: \dist(u, \Omega^c)<t\}|\leq t\Haus^{d-1}(\partial\Omega) \, .
  \end{equation}
   Correspondingly, the lower bound in~\eqref{eq:bound_perim_innerparallel} implies that, for $0\leq t\leq r_{in}(\Omega)$, 
  \begin{align}
    |\{u\in \Omega: \dist(u, \Omega^c)<t\}|
    &=
    \int_0^t \Haus^{d-1}(\partial\Omega_s)\, ds \\
    &\geq 
    \Haus^{d-1}(\partial\Omega)\int_0^t\Bigl(1- \frac{s}{r_{in}(\Omega)}\Bigr)^{d-1}\, ds\\
    &=
    \frac{\Haus^{d-1}(\partial\Omega)r_{in}(\Omega)}{d}\Bigl(1-\Bigl(1- \frac{t}{r_{in}(\Omega)}\Bigr)^{d}\Bigr)\\
    &\geq t\Haus^{d-1}(\partial\Omega)\Bigl(1- \frac{d-1}{2r_{in}(\Omega)}t\Bigr) \, .
  \end{align}
  Consequently we find that
  \begin{equation}
    -\frac{d-1}{2r_{in}(\Omega)}\, t \leq \Emink_{inner}(t)\leq 0 \, .
  \end{equation}

  To obtain the corresponding bounds for the measure of $\{x\in \Omega^c: \dist(x, \Omega)<t\}$ we first note that $\{u\in \R^d: \dist(u, \Omega)<t\}$ is convex and its inner parallel set at distance $t$ is $\Omega$. By applying~\eqref{eq:bound_perim_innerparallel} to this set and using $r_{in}(\{u\in \R^d: \dist(u, \Omega)<t\})=r_{in}(\Omega)+t$ we find that
  \begin{align}
    \Haus^{d-1}(\{u\in \R^d: &\dist(u, \Omega)=t\})\Bigl(\frac{r_{in}(\Omega)}{r_{in}(\Omega)+t}\Bigr)^{d-1}\\
    &\leq \Haus^{d-1}(\partial\Omega)\leq \Haus^{d-1}(\{u\in \R^d: \dist(u, \Omega)=t\}) \, .
  \end{align}
Rearranging and arguing as before one finds
  \begin{equation}
    t\Haus^{d-1}(\partial\Omega)\leq |\{u\in \R^d: \dist(u, \Omega)<t\}|\leq  t\Haus^{d-1}(\partial\Omega)\Bigl(1+\frac{2^d-d-1}{d\, r_{in}(\Omega)}t\Bigr) \, , 
  \end{equation}
  and hence
  \begin{equation}
    0\leq \Emink_{outer}(t)\leq \frac{2^d-d-1}{d\, r_{in}(\Omega)}t \, .
  \end{equation}

  By combining the bounds for $\Emink_{inner}$ and $\Emink_{outer}$ one obtains the third inequality in~\eqref{eq:bdry_region_bound_convex}. This completes the proof of the lemma.
\end{proof}

The following lemma achieves the second goal mentioned at the beginning of this section. Note that this is similar to~\eqref{eq:bound_goodsetisbig} but without involving $\mu(\eps, r)$ or $\overline\Emink$ and with an explicit value for $s_0$.

\begin{lemma}\label{lem:explicit_G_convex}
Let $\Omega\subset\R^d$ be a convex open set. Then, for all $\eps\in(0, 1]$, $r\in(0, \eps r_{in}(\Omega))$ and $s\in(0, r/2]$, 
  \begin{equation}
    |\{u\in \R^d: \dist(u, \partial\Omega)<s\}\setminus \G_{\eps, r}| 
    \leq C\Haus^{d-1}(\partial\Omega)\frac{s r}{\eps r_{in}(\Omega)} \, , 
  \end{equation}
  where $C$ depends only on the dimension.
\end{lemma}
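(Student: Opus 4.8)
The plan is to write $N_s:=\{u\in\R^d:\dist(u,\partial\Omega)<s\}$, to split $N_s\setminus\G_{\eps,r}$ into a part lying over the ``bad'' boundary $\partial\Omega\setminus G_{\eps,r}$ and a part lying over the corners of $\Omega$, to control the former by $2s\,\Haus^{d-1}(\partial\Omega\setminus G_{\eps,r})$ up to lower order terms, and then to prove the geometric bound
\begin{equation}\label{eq:badconvex}
\Haus^{d-1}(\partial\Omega\setminus G_{\eps,r})\leq C\,\frac{r}{\eps\,r_{in}(\Omega)}\,\Haus^{d-1}(\partial\Omega)\,,\qquad 0<r<\eps\,r_{in}(\Omega)\,.
\end{equation}
Throughout I use that, $\Omega$ being convex, $\Haus^{d-1}(\partial\Omega)\geq\Haus^{d-1}(\partial B_{r_{in}(\Omega)})=d\omega_d\,r_{in}(\Omega)^{d-1}$ by monotonicity of the perimeter under inclusions; this makes error terms of size $s^d$ negligible, since $s^d\le(r/2)^{d-1}s\le C\,r_{in}(\Omega)^{d-1}\tfrac{r}{\eps\,r_{in}(\Omega)}s\le C\,\Haus^{d-1}(\partial\Omega)\tfrac{sr}{\eps\,r_{in}(\Omega)}$, using $r<\eps\,r_{in}(\Omega)$, $\eps\le1$ and $s\le r/2$.

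\textbf{Reduction to \eqref{eq:badconvex}.} Let $u\in N_s$. If $u\in\Omega$, put $\rho:=\dist(u,\partial\Omega)<s\le r/2$; for a nearest point $p\in\partial\Omega$ that is smooth (has a unique normal) one has $u=p+\rho\,\nu(p)$, and then $u\in\Gamma_{\eps,r}(p)$ because $|u-p|=\rho<r/2$ and $(u-p)\cdot\nu(p)=\rho=|u-p|>\sqrt{1-\eps^2}\,|u-p|$; when $u\in\Omega^c$ the same holds with $u=p-\rho\,\nu(p)$ and $p$ the (unique, by convexity) nearest point. Hence if $u\in N_s\setminus\G_{\eps,r}$ then every nearest point of $u$ is a corner of $\Omega$ or a smooth point of $\partial\Omega\setminus G_{\eps,r}$. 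The ``corner'' part is contained in the union over all corners $q$ of $\{q+\rho\xi:0<\rho<s,\ \xi\in\S^{d-1}$ an inner or outer unit normal at $q\}$, of measure $\le\omega_d s^d$, since the normal cones at distinct corners have disjoint interiors in $\S^{d-1}$. For the ``smooth bad'' part, fibre over the level $\rho\in(0,s)$ and apply the area formula to $p\mapsto p\pm\rho\,\nu(p)$ on $\partial\Omega\setminus G_{\eps,r}$. On the inside the Jacobian is $\prod_i(1-\rho\,\kappa_i(p))$, and at every $p$ for which $p+\rho\,\nu(p)$ realizes the distance $\rho$ the inscribed ball $B_\rho(p+\rho\,\nu(p))\subset\Omega$ forces $\kappa_i(p)\le1/\rho$, so this Jacobian lies in $[0,1]$ and the inside contributes at most $\int_0^s\Haus^{d-1}(\partial\Omega\setminus G_{\eps,r})\,d\rho=s\,\Haus^{d-1}(\partial\Omega\setminus G_{\eps,r})$. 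On the outside the Jacobian is $\prod_i(1+\rho\,\kappa_i(p))\ge1$, and applying \eqref{eq:bound_perim_innerparallel} to $\Omega^{+\rho}:=\{u:\dist(u,\Omega)<\rho\}$ (convex, with $(\Omega^{+\rho})_\rho=\Omega$ and $r_{in}(\Omega^{+\rho})=r_{in}(\Omega)+\rho$) gives $\int_{\partial\Omega}\prod_i(1+\rho\,\kappa_i)\,d\Haus^{d-1}=\Haus^{d-1}(\partial\Omega^{+\rho})\le\Haus^{d-1}(\partial\Omega)(1+\rho/r_{in}(\Omega))^{d-1}$; subtracting the contribution of $G_{\eps,r}$ (where this Jacobian is $\ge1$) and integrating in $\rho$, the outside contributes at most $s\,\Haus^{d-1}(\partial\Omega\setminus G_{\eps,r})+C\,\Haus^{d-1}(\partial\Omega)\,s^2/r_{in}(\Omega)$. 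Since $s^2/r_{in}(\Omega)\le\tfrac{r}{2}\tfrac{s}{r_{in}(\Omega)}\le\tfrac{sr}{2\eps\,r_{in}(\Omega)}$, collecting terms reduces the lemma to \eqref{eq:badconvex}.

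\textbf{Proof of \eqref{eq:badconvex}.} Near a smooth point $p$, $\partial\Omega$ is the graph of a convex function $g_p$ with $g_p(0)=0$ and $\nabla g_p(0)=0$ (possible since $\Omega$ lies in the supporting half space at $p$). If $p\notin G_{\eps,r}$ there is $x\in\partial\Omega\cap B_r(p)$ with $(x-p)\cdot\nu(p)\ge\eps|x-p|$, which on the graph says that $g_p$ attains slope $\ge\eps$ within radius $r$; by convexity (monotonicity of $\nabla g_p$ along rays) the normal of $\partial\Omega$ then varies by at least $\eps$ over $\partial\Omega\cap B_{2r}(p)$. This forces a definite amount of curvature: the first curvature measure satisfies $S_{d-2}(\Omega,\partial\Omega\cap B_{Cr}(p))\ge c_d\,\eps\,r^{d-2}$. (For $d=2$ this is the elementary fact that a convex arc whose tangent turns by $\ge\eps$ has total curvature $\ge\eps$; for $d\ge3$ one reduces to it via the Crofton-type formula for curvature measures, integrating over the $2$-planes meeting the segment $[p,x]$, a set of measure $\gtrsim r^{d-2}$ each contributing $\gtrsim\eps$.) On the other hand, the total first curvature measure of $\partial\Omega$ is, up to a dimensional constant, $\tfrac{d}{dt}\big|_{t=0^+}\Haus^{d-1}(\partial\Omega^{+t})$, which by the Steiner-type bound just used is $\le\tfrac{d-1}{r_{in}(\Omega)}\Haus^{d-1}(\partial\Omega)$. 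Now take a Besicovitch covering $\partial\Omega\setminus G_{\eps,r}\subset\bigcup_j B_{Cr}(p_j)$, $p_j\in\partial\Omega\setminus G_{\eps,r}$, with overlap bounded by $N=N(d)$; summing the cap bounds and using the overlap together with the total curvature bound gives $\#\{j\}\le C_d\,\eps^{-1}r^{2-d}\,r_{in}(\Omega)^{-1}\Haus^{d-1}(\partial\Omega)$. Since $\Haus^{d-1}(\partial\Omega\cap B_{Cr}(p_j))\le C_d\,r^{d-1}$ (perimeter monotonicity, applied to $\overline\Omega\cap B_{2Cr}(p_j)$), we get $\Haus^{d-1}(\partial\Omega\setminus G_{\eps,r})\le\sum_j\Haus^{d-1}(\partial\Omega\cap B_{Cr}(p_j))\le C_d\,r^{d-1}\#\{j\}\le C\,\tfrac{r}{\eps\,r_{in}(\Omega)}\Haus^{d-1}(\partial\Omega)$, which is \eqref{eq:badconvex}.

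\textbf{Main obstacle.} The technical heart is the lower bound $S_{d-2}(\Omega,\partial\Omega\cap B_{Cr}(p))\gtrsim\eps\,r^{d-2}$ from a single bad witness: it must remain valid when the bad behaviour is concentrated on a very thin set---a narrow spike or a codimension-two edge---which is exactly why one passes to the $2$-planes through $[p,x]$, where the slice $\partial(\Omega\cap E)$ is again a convex curve and still turns by $\gtrsim\eps$ near $p$. Everything else is bookkeeping around \eqref{eq:bound_perim_innerparallel} and the area formula.
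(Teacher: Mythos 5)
Your route is genuinely different from the paper's (the paper never estimates the surface measure of the bad boundary set: it constructs, via the normal map of the inner parallel body $\Omega_{r/\eps}$, an explicit subset $G\subseteq G_{\eps, r}$ and bounds the tube volume directly with the co-area formula and \eqref{eq:bound_perim_innerparallel}), but as written your argument has a genuine gap exactly at what you call its technical heart. The cap bound $S_{d-2}(\Omega, \partial\Omega\cap B_{Cr}(p))\geq c_d\, \eps\, r^{d-2}$ is not proved, and the justification you offer fails: the witness $x$ for a bad point $p$ need only satisfy $|x-p|\leq r$, and it may well be that $|x-p|\ll r$. The invariant measure of affine $2$-planes meeting the segment $[p, x]$ is comparable to $V_{d-2}$ of that segment, hence of order $|x-p|$ when $d=3$ (not $r$) and equal to zero when $d\geq 4$ (a generic $2$-plane misses a segment), so the Crofton-type slicing you invoke does not produce $\eps\, r^{d-2}$; naively it only sees the scale $|x-p|$, and for $d\geq3$ a bound of order $\eps|x-p|^{d-2}$ would be useless in your counting step. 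The estimate itself is plausibly true, but proving it requires an additional convexity argument that you do not supply: one must first propagate the witness along the whole ray from $p$ through $x$ (convexity of $t\mapsto g(te)$ gives $g(te)\geq\eps t$ up to $t\sim r$, where $g$ is the graph function at $p$), and then convert this one--dimensional largeness into a $(d-2)$-dimensional curvature bound, e.g.\ by a reflection argument ($g(z')+g(z'')\geq 2g((z'\cdot e)e)\geq 2\eps\, z'\cdot e$ for $z''$ the reflection of $z'$ across the axis $\R e$) combined with $\partial_n g\geq g/|z'|$ and the divergence identity for the mean curvature of a convex graph -- together with some care about whether the boundary is actually a graph over a disk of radius $\sim r$ at $p$. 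None of this is bookkeeping.

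Two further points. First, several intermediate steps silently assume more regularity than a convex boundary has: the area formulas with Jacobians $\prod_i(1\mp\rho\kappa_i)$ for the a.e.-defined normal maps need justification (the clean fix for the inner part is that $u$ is the nearest-point projection of $p$ onto the convex set $\overline{\Omega_\rho}$, a $1$-Lipschitz map, giving directly the bound $s\,\Haus^{d-1}(\partial\Omega\setminus G_{\eps, r})$), and the asserted identity $\int_{\partial\Omega}\prod_i(1+\rho\kappa_i)\, d\Haus^{d-1}=\Haus^{d-1}(\partial\Omega^{+\rho})$ is in general only an inequality $\leq$, because edges and corners carry singular curvature -- fortunately the direction you need. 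Second, the intermediate inequality you aim for, $\Haus^{d-1}(\partial\Omega\setminus G_{\eps, r})\leq C\,\tfrac{r}{\eps\, r_{in}(\Omega)}\,\Haus^{d-1}(\partial\Omega)$, is true and in fact drops out of the paper's own construction: Steps~1--2 of the paper's proof produce $G\subseteq G_{\eps, r}$ with $\Haus^{d-1}(G)\geq\Haus^{d-1}(\partial\Omega_{r/\eps})\geq\bigl(1-\tfrac{r}{\eps\, r_{in}(\Omega)}\bigr)^{d-1}\Haus^{d-1}(\partial\Omega)$ by \eqref{eq:bound_perim_innerparallel}. If you replace your curvature-counting argument by this observation, your reduction (inner/outer fibration plus the corner estimate) can be salvaged; as it stands, the key estimate is unsubstantiated.
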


\begin{proof}[Proof of Lemma~\ref{lem:explicit_G_convex}]
We divide the proof into three steps.

{\bf Step 1:} We define a set $G\subseteq\partial\Omega$.

We recall that $\Omega_t$ is defined in \eqref{eq:innerparallel}. We denote by $\reg(\partial\Omega_t)$ the set of points $x\in\partial\Omega_t$ for which the inner unit normal $\nu_t(x)$ exists. We consider the natural normal-map defined for $t\in [0, r_{in}(\Omega))$ by
\begin{equation}
  f_t\colon \reg(\partial \Omega_t)\times \R_\limplus \to \R^d \, , \  (x, s) \mapsto x- s\nu_t(x) \, .
\end{equation}
We observe that $f_{t}(\reg(\partial\Omega_{t}), s)\subseteq \reg(\partial\Omega_{t-s})$ for $0<s\leq t$ and, in particular, that $f_{t}(\reg(\partial\Omega_{t}), t)\subseteq \reg(\partial\Omega)$. We also note that for all $s\in [0, t]$ the inwards pointing normal to $\partial\Omega_{t-s}$ at $f_t(x, t-s)$ is equal to the normal at $x$, $\nu_t(x)$. It follows that the image of the map $f_t(x, \, \cdot \, )\colon [0, \infty)\to \R^d$ is a ray starting at $x$ and passing orthogonally through $\partial\Omega$ at the point $f_t(x, t)$. If $f_t(x, t)$ is $(\eps, r)$-good this ray forms the axis of symmetry for the cone $\Gamma_{\eps, r}(f_t(x, t))$. After these preparations, we now set
$$
G=f_{r/\eps}(\reg(\partial\Omega_{r/\eps}), r/\eps) \, .
$$

{\bf Step 2:} We show that for $\eps\in(0, 1)$ and $r\in (0, \eps r_{in}(\Omega))$ every $p\in G$ is $(\eps, r)$-good.

Note that we only need to check the $(\eps, r)$-condition in the inwards direction, since for any $y\in \reg(\partial\Omega)$ the boundary $\partial\Omega$ is contained in the half-space $\{u\in \R^d: (u-y)\cdot \nu(y)\geq 0\}$.

  The main idea behind the construction of $G$ is based on the observation that if a point $y\in \reg(\partial\Omega)$ fails to be $(\eps, r)$-good then it cannot be in the image of $f_t$ for suitably chosen~$t$, see Figure~\ref{fig:normalmap}. 

  Assume that $y\in \reg(\partial\Omega)$ fails to be $(\eps, r)$-good. If there is a point of $\reg(\partial\Omega_t)$ which is mapped to $y\in \reg(\partial\Omega)$ under the normal map $f_t$ it must be the point $y+t\nu(y)$. However, since $y$ is not $(\eps, r)$-good there is a point $y'\in \Omega^c$ such that $|y'-y|=r$ and $(y'-y)\cdot \nu(y)= \eps r$. By elementary trigonometry we find that if $t> \frac{r}{2\eps}$ then $|y+t\nu(y)-y'|<t$, and therefore $y+t\nu(t)$ does not belong to $\partial\Omega_t$ implying that $y\notin f_t(\reg(\partial\Omega_t), t)$. This proves that any $p\in G=f_{r/\eps}(\reg(\partial\Omega_{r/\eps}), r/\eps)$ is an $(\eps, r)$-good point of $\partial\Omega$.
  \begin{figure}[t!]
    \centering
    \def \tmpscale {0.8}

\begin{tikzpicture}[scale=\tmpscale]

\def \angle {15};
\def \lll {5};
\def \wid {0.13};

\clip (-7,0.025) rectangle (7,{-\lll/(2*sin(\angle))-0.3});

\draw[gray] (0,0) circle [radius=\lll];

\draw[fill=gray!20] (0,0) -- ({-\lll*cos(\angle)},{\lll*sin(\angle)}) arc ({180-\angle}:{180+\angle}:{\lll})--cycle;
\draw[fill=gray!20] (0,0) -- ({\lll*cos(-\angle)},{-\lll*sin(\angle)}) arc ({-\angle}:{+\angle}:{\lll})--cycle;

\draw[dashed, black!80] (0, 0)--(0, {-\lll/(2*sin(\angle))});
\draw[->] (0, 0)--(0, -1.8);
\node at (0.8, {-\lll/(2*sin(\angle))}) {\scalebox{\tmpscale}{$y+\frac{r}{2\eps}\nu$}};
\fill (0, {-\lll/(2*sin(\angle))}) circle [radius=0.021];

\draw[gray] (-5.75,0)--(5.75,0);
\def \tmp {5};
\draw[dashed, black!80] ({\tmp*cos(\angle)}, {-\tmp*sin(\angle)})--({\tmp*cos(\angle)+\lll*cos(270-2*\angle)/(2*sin(\angle))}, {-\tmp*sin(\angle)+\lll*sin(270-2*\angle)/(2*sin(\angle))});

\draw [thick,densely dotted, samples=50,domain=180-\angle:270-2*\angle] plot  ({\tmp*cos(\angle)+0.7*cos(\x)}, {-\tmp*sin(\angle)+0.7*sin(\x)});
\node at ({\tmp*cos(\angle)+0.928*cos(225-3/2*\angle)}, {-\tmp*sin(\angle)+0.928*sin(225-3/2*\angle)}) {\scalebox{\tmpscale}{$\alpha'$}};

\draw [thick,densely dotted, samples=50,domain=270:360-\angle] plot  ({0.7*cos(\x)}, {0.7*sin(\x)});
\node at ({0.85*cos(270+45-\angle/2)}, {0.85*sin(270+45-\angle/2)}) {\scalebox{\tmpscale}{$\alpha'$}};

\draw ({-(\lll*1)*cos(\angle)},{-(\lll*1)*sin(\angle)})--(0,0)--({(\lll*1)*cos(\angle)},{-(\lll*1)*sin(\angle)});
\draw ({-(\lll*1)*cos(\angle)},{(\lll*1)*sin(\angle)})--(0,0)--({(\lll*1)*cos(\angle)},{(\lll*1)*sin(\angle)});

\draw [thick,densely dotted, samples=50,domain=0:-\angle] plot ({1.9*cos(\x)}, {1.9*sin(\x)});
\node at (2.08,-0.13) {\scalebox{\tmpscale}{$\alpha$}};

\draw [ultra thick]
(-5.8,-2)--(-5,-1.3)--(-4.6,-1)--(-3.2,-0.4)--(-2, -0.1)--(-1,0)--(0.5,0)--(1.3,-0.1)--(2.9,-0.5)--(3.3,-0.68)--(4.6,-1.78)--(5.8, -4.1);

\node at (0.15,-1/4) {\scalebox{\tmpscale}{$y$}};
\node at ({\tmp*cos(\angle)+0.2}, {-\tmp*sin(\angle)-0.1}) {\scalebox{\tmpscale}{$y'$}};
\node at (-5.4, -2) {\scalebox{\tmpscale}{$\partial\Omega$}};
\node at (0.2,-1.8) {\scalebox{\tmpscale}{$\nu$}};

\draw [<->] (0,0)--({\lll*cos(225)},{\lll*sin(225)});
\node at ({\lll*cos(220)/2},{\lll*sin(220)/2}) {\scalebox{\tmpscale}{$r$}};

\end{tikzpicture}
    \caption{A $2$-dimensional cross-section of a neighborhood of $y$ illustrating the idea behind the construction of $G$. Here $\alpha=\arcsin(\eps)$ and $\alpha'=\pi/2-\alpha$.}
    \label{fig:normalmap}
  \end{figure}
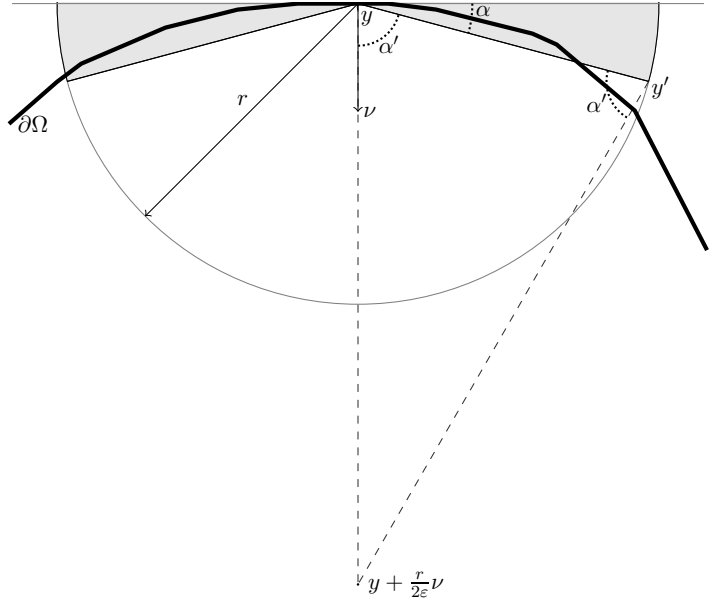

{\bf Step 3:} We now prove the inequality in the lemma.

We observe that for any fixed $t>0$ and all $s\geq 0$ the map $f_t(\, \cdot\, , s)$ is injective, and by convexity $\Haus^{d-1}(f_t(\reg(\partial\Omega_t), s))$ is an increasing functions of $s$. Note also that $\Haus^{d-1}(\reg(\partial\Omega_t))=\Haus^{d-1}(\partial\Omega_t)$ since $\Haus^{d-1}$-a.e.\ point of the boundary of a $d$-dimensional convex set is regular (see~\cite{Schneider_14}).

  Lemma~\ref{lem:boundary_region_convex} implies that
  \begin{align}
    |\{u\in \R^d: \dist(u, \partial\Omega)<s\}\setminus \G_{\eps, r}| 
    &\leq 
      2s \Haus^{d-1}(\partial\Omega)(1 + C s/r_{in}(\Omega))\\
    &\quad
     -|\{u\in \R^d: \dist(u, \partial\Omega)<s\}\cap \G_{\eps, r}| \, .
  \end{align}
Therefore using $s\leq r/2 \leq r/(2\eps)$ we see that the claimed inequality will follow from
  \begin{equation}
    |\{u\in \R^d:\dist(u, \partial\Omega)<s\}\cap\, \G_{\eps, r}|\geq 2s\Haus^{d-1}(\partial\Omega)\Bigl(1- \frac{C r}{\eps r_{in}(\Omega)}\Bigr) \, , \quad \forall s\leq r/2 \, .
  \end{equation}

  Since every $p\in G$ is $(\eps, r)$-good
  \begin{equation}
    f_{r/\eps}(\reg(\partial\Omega_{r/\eps}), r/\eps + s')\subset \G_{\eps, r} \, , 
    \quad \forall s'\in (-r/2, r/2) \, .
  \end{equation}
  Therefore, using again the co-area formula, \eqref{eq:Eikonal}, \eqref{eq:bound_perim_innerparallel} and the fact that $\Haus^{d-1}(f_t(\partial\Omega_t, s))$ is increasing in $s$, 
  \begin{align}
    |\{u\in \R^d: \dist(u, \partial\Omega)< s\}\cap \G_{\eps, r}|
    &\geq 
    \int_{-s}^{s}\Haus^{d-1}(f_{r/\eps}(\reg(\partial\Omega_{r/\eps}), r/\eps+s')\, ds'\\
    & \geq 
    2s \Haus^{d-1}(f_{r/\eps}(\reg(\partial\Omega_{r/\eps}), r/\eps-s)\\  
    &\geq 
    2s \Haus^{d-1}(\partial\Omega_{r/\eps})\\
    &\geq
    2s\Haus^{d-1}(\partial\Omega)\Bigl(1-\frac{r}{\eps r_{in}(\Omega)}\Bigr)^{d-1}\\
    &\geq
    2s \Haus^{d-1}(\partial\Omega)\Bigl(1-\frac{(d-1)r}{\eps r_{in}(\Omega)}\Bigr) \, .
  \end{align}
This completes the proof of Lemma~\ref{lem:explicit_G_convex}.
\end{proof}

\begin{remark}
  The points in the set $G$ in the previous proof are a lot better than $(\eps, r)$-good. The proof shows essentially that for any $p\in G$ the principal curvatures of $\partial\Omega$ are bounded from above by~$\sim \eps r^{-1}$. That this set is large for $r$ small enough follows from Aleksandrov's theorem on a.e.\ twice differentiability of convex functions.
\end{remark}

As explained at the beginning of this subsection, proving Theorem~\ref{thm:asymptotic_ineq_convex_alt} is now simply a matter of bounding all the relevant error terms in the derivation of the asymptotic expansion. 

\begin{proof}[Proof of Theorem~\ref{thm:asymptotic_ineq_convex_alt}]

We repeat the proof of Theorem~\ref{thm:MainTheorem_alt} but in~\eqref{eq:Contribution_badregion} and~\eqref{eq:Adding_bad_boundary}, where we used Lemma~\ref{lem:GoodSetIsBig}, we simply keep the term $|\Omega_b|$. In this way we find
  \begin{align}
   & h^{-d+1}\Bigl|\Tr(H_\Omega)_\limminus - L_d |\Omega|h^{-d}+ \frac{L_{d-1}}{4}\Haus^{d-1}(\partial\Omega)h^{-d+1}\Bigr| \\
    &\leq
  C \Haus^{d-1}(\partial\Omega)\biggl[\eps_0 \bigl[1+ \overline\Emink(r_{in}(\Omega))\bigr] + \frac{|\Omega_b|}{h\Haus^{d-1}(\partial\Omega)}
+ \overline\Emink(l_0)  + \bigl(\eps_0^{-1}\eps + \eps_0^{1/3}\bigr) \bigl[1+\overline\Emink(l_0)\bigr] \biggr] \, , 
  \end{align}
  where we again require $0<h<2r_{in}(\Omega)$, $r>0$, $\eps\in (0, 1/2]$ and $\eps_0\in(0, 4]$ to be chosen so that
  $$
  h/\eps_0 \leq \min\bigl\{r/2, r_{in}(\Omega)/2\bigr\} \, .
  $$

We now use the convexity of $\Omega$ to bound the terms which still depend on the geometry. By~\eqref{eq:inradius_bound} we have
$$
\overline\Emink(r_{in}(\Omega)) \leq C
\qquad\text{and}\qquad
\overline\Emink(l_0) \leq C \frac{l_0}{r_{in}(\Omega)} \, .
$$
Furthermore, if $r \leq \eps r_{in}(\Omega)$ and $l_0\leq r/2$, then Lemma~\ref{lem:explicit_G_convex} implies that  
    \begin{equation}
      |\Omega_b| \leq C \Haus^{d-1}(\partial\Omega) \frac{l_0 r}{\eps r_{in}(\Omega)} = C \Haus^{d-1}(\partial\Omega) \frac{h r}{\eps \eps_0 r_{in}(\Omega)} \, .
    \end{equation}
Therefore, the error term above is bounded by
$$
C \Haus^{d-1}(\partial\Omega)\biggl[ \frac{r}{\eps \eps_0 r_{in}(\Omega)} + \eps_0^{-1}\eps + \eps_0^{1/3} \biggr].
$$
(Here we have dropped a term $h/(\eps_0 r_{in}(\Omega))$ coming from the bound on $\overline\Emink(l_0)$, since $h\leq \eps_0 r\leq \eps_0 \eps r_{in}(\Omega)$, so this term is $\leq \eps$ and therefore also $\leq4\,\eps_0^{-1}\eps$.) The above bound is valid provided the parameters satisfy
$$
h\leq \eps_0\, r/2
\qquad\text{and}\qquad
r \leq \eps \, r_{in}(\Omega) \, .
$$

It remains to choose the parameters. We first assume that $s=h/r_{in}(\Omega)\leq 1$. Optimizing successively over $r$, $\eps$ and $\eps_0$ in that order and adjusting the constants we arrive at the choices
$$
r= (1/2) r_{in}(\Omega)\, s^{8/11} \, , 
\qquad
\eps = (1/2)\, s^{4/11} \, , 
\qquad
\eps_0 = 4\, s^{3/11} \, .
$$
Clearly all constraints are satisfied and the final error is
$$
C \Haus^{d-1}(\partial\Omega)\, s^{1/11} = C \Haus^{d-1}(\partial\Omega) (h/r_{in}(\Omega))^{1/11} \, .
$$
This is the claimed bound for $h\leq r_{in}(\Omega)$.

Finally, for any convex $\Omega\subset \R^d$ the first eigenvalue of $-\Delta_\Omega$ satisfies $\lambda_1(\Omega)\geq \frac{\pi^2}{4 r_{in}(\Omega)^2}$~\cite{Hersch, Protter_81}. Hence $\Tr(H_\Omega)_\limminus=0$ for all $h\geq (2/\pi) r_{in}(\Omega)$ and, in particular, for $h\geq r_{in}(\Omega)$. Combining this observation with the fact that $\frac{|\Omega|}{r_{in}(\Omega)} \leq \Haus^{d-1}(\partial\Omega)$ (see~\eqref{eq:inradius_bound}) the claimed bound holds also for any $h\geq r_{in}(\Omega)$, which completes the proof.
\end{proof}


\appendix

\section{Proof of Lemma~\ref{lem:localization}}
\label{appendixA}

What remains to conclude our analysis is to prove Lemma~\ref{lem:localization}. As mentioned earlier the proof follows the same strategy as the proof of Proposition~1.1 in~\cite{FrankGeisinger_11}.

\begin{proof}[Proof of Lemma~\ref{lem:localization}]
  Set
  \begin{equation}
    \gamma = \int_{\R^d}\phi_u( \phi_u \varphi H_\Omega \varphi \phi_u)_\limminus^0\phi_u l(u)^{-d}\,du \, .
  \end{equation}
  Clearly $\gamma \geq 0$ and by~\eqref{eq:phi_properties1} $\gamma \leq 1$. Since the range of $\gamma$ is a subset of $H^1_0(\Omega)$ the variational principle tells us that
  \begin{equation}
    \Tr(\varphi H_\Omega\varphi)_\limminus \geq - \Tr(\gamma \varphi H_\Omega \varphi) = \int_{\R^d}\Tr(\phi_u\varphi H_\Omega \varphi \phi_u)_\limminus l(u)^{-d}\,du\, .
  \end{equation}
  This completes the proof of one side of the inequality.

  To complete the proof we use the following version of the IMS-localization formula: for $f\in H_0^1(\Omega)$,
  \begin{equation}
    \frac{1}{2}( f, \phi_u^2 \varphi(-\Delta)\varphi f)+
    \frac{1}{2}(f, \varphi (-\Delta)(\phi_u^2\varphi f)) = (f, \phi_u\varphi(-\Delta) \varphi\phi_u f)-(\varphi f, \varphi f(\nabla \phi_u)^2)\, .
  \end{equation}
  By~\eqref{eq:phi_properties1} this yields that
  \begin{equation}\label{eq:IMS2}
    (f, \varphi(-\Delta)\varphi f) = \int_{\R^d}\bigl((f, \phi_u\varphi(-\Delta) \varphi\phi_u f)-(\varphi f, \varphi f(\nabla \phi_u)^2)\bigr) l(u)^{-d}\, du\, .
  \end{equation}

  Using the properties of $l$ and $\phi_u$ in Lemma~\ref{solovejspitzer} one can show, see the proof of~\cite[eq.~(68)]{SolovejSpitzer}, that
  \begin{equation}
    \int_{\R^d}(\nabla \phi_u)^2(x)l(u)^{-d}\,du \leq C \int_{\R^d}\phi_u^2(x)l(u)^{-d-2}\,du \, .
  \end{equation}
  When combined with~\eqref{eq:IMS2} we find that
  \begin{equation}\label{eq:local_trace_estimate}
    \Tr(\varphi H_\Omega \varphi)_\limminus \leq \int_{\dist(u, \Omega\,\cap \,\supp \varphi)\leq l(u)} \Tr(\phi_u\varphi(H_\Omega-C h^2l(u)^{-2})\varphi\phi_u)_\limminus l(u)^{-d}\,du\, .
  \end{equation}

  Let $0<\rho_u \leq 1$ be an additional parameter to be chosen later. By the variational principle
  \begin{equation}
  \begin{aligned}
     \Tr(&\phi_u\varphi(H_\Omega-C h^2l(u)^{-2})\varphi\phi_u)_\limminus\\
     &\leq 
     \Tr(\phi_u \varphi H_\Omega \varphi \phi_u)_\limminus + 
     \Tr(\phi_u \varphi (-\rho_u h^2 \Delta_\Omega-\rho_u-Ch^2l(u)^{-2})\varphi \phi_u)_\limminus\\
     &\leq
     \Tr(\phi_u \varphi H_\Omega \varphi \phi_u)_\limminus 
     + 
     L_d (\rho_u+Ch^2l(u)^{-2})^{1+d/2}\rho_u^{-d/2}h^{-d}\int_{\Omega}\phi_u^2(x)\varphi(x)^2 \,dx\, ,
   \end{aligned} 
   \end{equation}
   where we in the last step used Lemma~\ref{lem:BerezinLiebLiYau}.

   Setting $\rho_u = h^2 l(u)^{-2}/M^2$, which by assumption is bounded by $1$, we conclude that 
   \begin{align}\label{eq:localization_proof_berezin}
     \Tr(\phi_u&\varphi(H_\Omega-C h^2l(u)^{-2})\varphi\phi_u)_\limminus\\
     &
     \leq
     \Tr(\phi_u \varphi H_\Omega \varphi \phi_u)_\limminus
     +  L_dM^{-2}(1+C M^2)^{1+d/2}h^{-d+2}l(u)^{-2}\int_{\Omega}\phi_u^2(x)\varphi(x)^2\,dx \, .
   \end{align}

   Since $\|\phi_u\|_{L^\infty}\leq C$ and $|{\supp \phi_u}| \leq Cl(u)^d$ it holds that
   \begin{align}\label{eq:Linfty_estimate}
     \int_{\dist(u, \Omega \, \cap \, \supp \varphi)\leq l(u)}&\int_\Omega \phi_u(x)^2 \varphi(x)^2 l(u)^{-d-2}\,dx\,du\\
     &\leq
     \|\varphi\|_{L^\infty(\Omega)}^2\int_{\dist(u, \Omega \, \cap \, \supp \varphi)\leq l(u)}\int_\Omega \phi_u(x)^2 l(u)^{-d-2}\,dx\,du\\
     &\leq
     C\|\varphi\|_{L^\infty(\Omega)}^2\int_{\dist(u, \Omega \, \cap \, \supp \varphi)\leq l(u)}l(u)^{-2}\,du \, .
   \end{align}

   Combining~\eqref{eq:local_trace_estimate},~\eqref{eq:localization_proof_berezin} and~\eqref{eq:Linfty_estimate} completes the proof of the lemma.
\end{proof}

We now move on to proving that the inequality of Proposition~1.1 in~\cite{FrankGeisinger_11} can be extended to all~$h>0$. We also show that the same construction allows us to prove the analogous statement for the length scale used in the proof of Theorem~\ref{thm:MainTheorem}.

We begin with a function $l$ as in Lemma~\ref{solovejspitzer} and any constant $S>0$. Assuming that $h\geq S \max_{\dist(u, \Omega\, \cap\, \supp \varphi)\leq l(u)}l(u)$ then by Lemma~\ref{lem:BerezinLiebLiYau} and~\eqref{eq:phi_properties1}
\begin{equation}\label{eq:h_large}
\begin{aligned}
  \Bigl|\Tr(\varphi &H_\Omega \varphi)_\limminus-\int_{\R^d}\Tr(\phi_u\varphi H_\Omega \varphi\phi_u)_\limminus l(u)^{-d}\, du\Bigr| \\
  &\leq 
  h^{-d}L_d\int_{\Omega} \varphi^2(x)\,dx+ h^{-d}L_d \int_{\R^d}\int_\Omega\varphi^2(x)\phi_u^2(x)l(u)^{-d}\,dx\,du\\
  &=
  h^{-d}2L_d\int_{\dist(u, \Omega\, \cap\, \supp \varphi)\leq l(u)}\int_\Omega\varphi^2(x)\phi_u^2(x)l(u)^{-d}\,dx\,du\\
  &\leq
  h^{-d}C \|\varphi\|^2_{L^\infty(\Omega)} \int_{\dist(u, \Omega \, \cap \supp \varphi)\leq l(u)} \,du\\
  &\leq
  h^{-d+2}C \|\varphi\|^2_{L^\infty(\Omega)} S^{-2} \int_{\dist(u, \Omega\, \cap \,\supp \varphi)\leq l(u)}l(u)^{-2}\,du \, .
\end{aligned}
\end{equation}
Here we used that $\int_\Omega \varphi^2(x)\phi_u(x)^2\,dx \leq \|\varphi\|^2_{L^\infty}C l(u)^{d}$ to obtain an estimate which matches that of Lemma~\ref{lem:localization}.

Assume now that we are given a length scale $l$ depending on a parameter $l_0$, which itself depends on $h$ in such a way that there are constants $\delta, \mu>0$ such that for $h \leq \delta$ one has $l_0 \geq \mu h$.

We first consider the length scale used in~\cite{FrankGeisinger_11}:
\begin{equation}
  l(u) = \frac{1}{2}\bigl(1+ (\dist(u, \Omega^c)+l_0^2)^{-1/2}\bigr)^{-1}, \quad \mbox{with }0<l_0\leq 1\, .
\end{equation}
We have that 
\begin{align}
  \min_{\dist(u, \Omega)\leq l(u)} l(u) &= \frac{l_0}{2+2l_0}\, ,\\
  \max_{\dist(u, \Omega)\leq l(u)} l(u) &\leq 1/2\, .
\end{align}
If $h\leq \delta$ and we set $M=\frac{2+\mu\delta}{\mu}$ then
\begin{equation}
  M\min_{\dist(u, \Omega)\leq l(u)} l(u) = \frac{2+2\mu\delta}{\mu}\frac{l_0}{2+2l_0} \geq \frac{2+2\mu\delta}{\mu}\frac{\mu h}{2+2\mu h} \geq h \, .
\end{equation}
Therefore, we can in the regime $h\leq \delta$ apply Lemma~\ref{lem:localization} with $M$ as above.
On the other hand, if $h> \delta$ then with $S=2\delta$ we have    
\begin{equation}
  S \max_{\dist(u, \Omega)\leq l(u)} l(u) \leq 2\delta /2 < h \, .
\end{equation}
Thus if $h>\delta$ we can apply~\eqref{eq:h_large} with $S=2\delta$.
In conclusion, with the choices of $l$ and $l_0$ made in~\cite{FrankGeisinger_11} the claimed inequality is valid for all~$h>0$.

Similarly, for the length scale~\eqref{eq:lengthscale_mainproof} used in the proof of Theorem~\ref{thm:MainTheorem}
we have
\begin{align}
  \min_{\dist(u, \Omega)\leq l(u)} l(u) &= l_0\, ,\\
  \max_{\dist(u, \Omega)\leq l(u)} l(u) &\leq r_{in}(\Omega)/2\, .
\end{align}
Setting $M=1/\mu$ and $S=2\delta/r_{in}(\Omega)$ we find
\begin{align}
  M\min_{\dist(u, \Omega)\leq l(u)} l(u) &= l_0/\mu \geq h\, ,\qquad  \mbox{for } h \leq \delta\, ,\\
  S \max_{\dist(u, \Omega)\leq l(u)} l(u) &\leq \delta < h\, , \hspace{35pt}  \mbox{for } h > \delta\, ,
\end{align}
and we can conclude in the same manner as above.


\bibliographystyle{amsplain}

\def\myarXiv#1#2{\href{http://arxiv.org/abs/#1}{\texttt{arXiv:#1\, [#2]}}}

\end{document}